\documentclass[10pt]{amsart}
\usepackage{amssymb,amsfonts,graphicx}
\usepackage{amsmath,amsthm}
\usepackage{chngcntr}
\usepackage{apptools}
\usepackage[utf8]{inputenc}

\usepackage{graphicx,amssymb,latexsym}
\input xy
\usepackage{color}
\usepackage[color,matrix,arrow,all]{xy}
\usepackage[all]{xy}
\usepackage{enumerate}
\xyoption{all}
\usepackage{amscd}
\usepackage{color}
\usepackage{tikz-cd}

\newcommand{\lra}{{\longrightarrow }}
\newcommand{\cO}{{\mathcal O}}
\newcommand{\cM}{{\mathcal M}}

\newcommand{\cA}{{\mathcal A}}

\newtheorem{thm}{Theorem}[section]
\newtheorem{cor}[thm]{Corollary}
\newtheorem{lem}[thm]{Lemma}
\newtheorem{prop}[thm]{Proposition}
\newtheorem{question}[thm]{Question}
\theoremstyle{definition}
\newtheorem{defn}[thm]{Definition}
\newtheorem{rem}[thm]{Remark}

\numberwithin{equation}{section}

\newcommand{\ZZ}{\mathbb Z}
\newcommand{\CC}{\mathbb C}
\newcommand{\PP}{\mathbb P}
\newcommand{\FF}{\mathbb F}

\newcommand{\ra}{\rightarrow}

\newcommand{\cH}{\mathcal{H}}

\newcommand{\tC}{\widetilde{C}}

\newcommand{\cP}{\mathcal{P}}
\newcommand{\cR}{\mathcal{R}}

\newcommand{\cRH}{\mathcal{RH}}
\newcommand{\s}{\sigma}

\DeclareMathOperator{\Aut}{{Aut}}

\DeclareMathOperator{\Ker}{Ker}
\DeclareMathOperator{\Fix}{Fix}

\DeclareMathOperator{\Nm}{{Nm}}

\DeclareMathOperator{\Ima}{{Im}}

\DeclareMathOperator{\Sing}{Sing}

\DeclareMathOperator{\im}{Im}

\DeclareMathOperator{\End}{{End}}
\DeclareMathOperator{\id}{{id}}

\title{Klein coverings over hyperelliptic genus 3 curves}

\author{Pawe\l{} Bor\'owka}
\address{Pawe\l{} Bor\'owka \newline Institute of Mathematics, Jagiellonian University in Krak\'ow\\ ul. prof. Stanisława Łojasiewicza 6, 
30-348 Kraków, Poland}
\email{pawel.borowka@uj.edu.pl}

\author{Angela Ortega}
\address{Angela Ortega
\newline Institut für Mathematik, Humboldt-Universität,
Unter den Linden 6, D-10099 Berlin, Germany}
\email{ortega@math.hu-berlin.de}

\begin{document}

\begin{abstract}
We characterize the moduli space of \'etale Klein coverings (i.e. Galois with deck group $\mathbb{Z}_2^2$) of hyperelliptic curves of genus 3. We prove that the Prym map on each component is injective. As an application, we show that the Prym map of \'etale Klein coverings of genus 3 curves is generically finite.
\end{abstract}
\maketitle

\section*{Introduction}

Let $H$ be a hyperelliptic smooth projective curve over $\CC$ of genus 3. We consider Klein coverings $f:\tC \ra H$, that is, 
$\tC$ admits an action by a group of fixed point free automorphisms $V_4$ isomorphic to $\ZZ_2 \times \ZZ_2$ such that $H = \tC / V_4$. 
One can associate to such a covering a polarised abelian sixfold in the following way. Let $\Nm_f : J\tC \ra JH$ be the norm map, sending the divisor class $[\sum_i n_ip_i]$ to $[\sum_i n_if(p_i)]$,
with $p_i \in \tC$ and $n_i\in \ZZ$. The Prym variety $P(f)$ of the covering $f$ is defined as the connected component of 0 of the kernel of $\Nm_f$. It is an abelian subvariety of $J\tC$ of dimension the difference of the genera of the curves, which in this case is 6 and the restriction of the principal polarisation on $J\tC$ induces a non-principal one on $P(f)$. 

Prym varieties of étale double covers have been extensively studied in the past and more recently it has been proven that 
the Prym map is injective for double covers  ramified in at least six points (\cite{NO22}). For étale cyclic coverings over a hyperelliptic curve, the (generic) injectivity 
of the Prym has been shown for infinitely many degrees (\cite{NOS24}, 
\cite{NOPS25}). There are also new results on the fiber of the Prym map for non-cyclic coverings in low genera (\cite{S25}).

Klein coverings can be defined from the curve $H$ by choosing a subgroup 
$\langle \eta, \xi\rangle $ of order four of the group $JH[2]$ of 2-torsion points in $JH$.  
We distinguish two types of Klein coverings depending whether the subgroup is isotropic or not with respect to the Weil pairing on  $JH[2]$. 
Let $\cRH_3^{iso}$, respectively  $\cRH_3^{ni}$, denote the moduli space parametrising isotropic, respectively  non-isotropic, Klein 
coverings on hyperelliptic curves of genus 3. 
Each of these moduli spaces consists of two irreducible components (see Lemma \ref{4components}). 

Consider now the Prym map that sends $[f:\tC \ra H] \in \cRH_3^{V_4}=\cRH_3^{iso} \sqcup \cRH_3^{ni} $ to the 
polarised Prym variety $(P,\Xi)$. In this article we continue the study of the injectivity of the Prym map for Klein  
coverings of genus $3$ curves, which was started in \cite{BO24}, where we prove the injectivity on the locus of hyperelliptic coverings $f:\tC \ra H $ (i.e. $\tC$ is also hyperelliptic) 
corresponding to the case I.1 below.  Let us denote by
$\cA_g^{\delta}$ the moduli space of polarised abelian varieties of dimension $g$ with polarisation type $\delta$.
The main theorem of the paper is the following.

\begin{thm}
The Prym maps 
$$
\cP^{iso}_3 :  \cRH_3^{iso} \ra  \cA_6^{(1,1,1,2,2,4)} \qquad \cP^{ni}_3 :  \cRH_3^{ni} \ra  \cA_6^{(1,1,1,1,4,4)}
$$
are injective on each irreducible component of the source moduli space. 

\end{thm}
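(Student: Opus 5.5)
We would prove injectivity separately on each of the four irreducible components of Lemma \ref{4components}, reconstructing the branch data of $H$ from $(P,\Xi)$. Write $H$ as $y^2=\prod_{i=1}^8(x-w_i)$. Every nonzero $\eta\in JH[2]$ is $e_S=\sum_{i\in S}w_i-\tfrac{|S|}{2}g^1_2$ for an even subset $S\subset\{1,\dots,8\}$, taken up to complement. Each subgroup $\langle\eta,\xi\rangle$ therefore corresponds to a configuration of subsets of sizes $2$ or $4$. Isotropy is read off from parities of intersections. This combinatorics gives the four types: I.1 and I.2 in the isotropic case, II.1 and II.2 in the non-isotropic case. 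Case I.1 is already covered by \cite{BO24}, so only three cases remain.

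\emph{First step: decompose $P$.} The covering $f$ factors through three étale double covers $C_k\to H$, $k=1,2,3$, with $g(C_k)=5$. The addition map
\[
P(C_1/H)\times P(C_2/H)\times P(C_3/H)\to P(f)
\]
is an isogeny, and its kernel is contained in the $2$-torsion. For a hyperelliptic base, the Prym of the double cover given by $e_S$ is known (Mumford, Dalaljan). For $|S|=2$ it is the Jacobian of the genus $2$ curve branched over the six points of the complement of $S$. For $|S|=4$ it is the product $E_S\times E_{S^c}$ of the elliptic curves branched over $S$ and over $S^c$. In each component we will write $P$ explicitly as a quotient of a product of genus $2$ Jacobians and elliptic curves attached to subsets of $\{w_i\}$. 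We then identify the kernel of this isogeny with a concrete subgroup of $2$-torsion points. These are differences of Weierstrass points, so they are labelled by pairs of indices.

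\emph{Second step: recover the pieces intrinsically from $(P,\Xi)$.} The $V_4$-action on $\tC$ induces an action on $P$, and the pieces are the images $\im(1+\sigma_k)$. We want to recover them without knowing the action. Our first attempt is to characterise them as the abelian subvarieties of $P$ on which $\Xi$ restricts with a prescribed type: for example, twice a principal polarisation on a surface, or an elliptic curve with a specific restriction degree. The kernel $K(\Xi)$, which is $(\ZZ_2)^2\times(\ZZ_4)^2$ resp.\ $(\ZZ_2)^2\times\ZZ_4^2$-shaped according to the type in the statement, should pin these down. An alternative is to use $\End(P)$: for a generic member the symmetric idempotents of $\End_{\mathbb Q}(P)$ preserving $\Xi$ are exactly those coming from $V_4$. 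We then extend to all members by a closedness and degeneration argument, or by direct inspection.

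\emph{Third step: glue.} The pieces give us the following data:
\begin{itemize}
\item genus $2$ curves, which determine $6$ points of $\PP^1$ up to $PGL_2$;
\item elliptic curves, which determine $4$ points up to $PGL_2$;
\item the gluing subgroups, which identify specific $2$-torsion points of the pieces with one another.
\end{itemize}
Since $2$-torsion points of a hyperelliptic Jacobian correspond to subsets of Weierstrass points, a gluing identification matches labelled Weierstrass points across the pieces. We then show that the $8$ points $w_1,\dots,w_8$ are determined up to $PGL_2$. When a genus $2$ piece occurs, six of the points are fixed by that piece and the remaining two are fixed by another piece sharing at least three of those points. In the purely elliptic cases, several elliptic curves share pairs of branch points, and we will use the identifications to fix cross-ratios consistently. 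Finally we recover the subgroup $\langle\eta,\xi\rangle$ from the labelling, which shows that $[f]$ is determined by $(P,\Xi)$.

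\emph{Main obstacle.} We expect the gluing step to be the hardest. Elliptic curves alone only give $j$-invariants, and these do not determine cross-ratios. We must show that the isogeny kernels carry enough level-$2$ structure to rigidify the configuration, and we must also rule out the possibility that the labelled data can be reassembled in two inequivalent ways. Our first attempt is to compute the level-$2$ structures explicitly in each case and check that they fix the $\lambda$-invariants, not merely the $j$-invariants. The intrinsic characterisation in the second step is also delicate, especially on special loci with extra automorphisms.
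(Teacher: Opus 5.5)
\paragraph{Gap in Step 2.} Your outline breaks down at Step 2, which is exactly where the paper's key idea sits. The statement asks for injectivity, not generic injectivity, so you must recover the pieces from $(P,\Xi)$ at \emph{every} point of the component. Neither of your suggestions does this.
\begin{itemize}
\item Prescribing restricted polarisation types does not by itself single out subvarieties, and you give no uniqueness argument. In case II.1, for instance, all six elliptic factors have type $(4)$. At special points, where factors become isogenous or $\tC$ acquires extra automorphisms, $P$ contains many more abelian subvarieties.
\item Your claim about $\End_{\mathbb Q}(P)$ is false. The lift $j$ of the hyperelliptic involution gives symmetric idempotents such as $\tfrac12(1+j)$ that are not in $\mathbb{Q}[V_4]$. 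Your own Step 1 already uses this, since it splits $P(C_S/H)=E_S\times E_{S^c}$.
\item A ``closedness and degeneration'' argument cannot turn generic statements into injectivity. Injectivity on a dense open set says nothing about the fibres over special points.
\end{itemize}

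The missing idea is Proposition \ref{autoofP}. The group $B$ of involutions of $(P,\Xi)$ acting as $\pm1$ on $K(\Xi)$ is defined from $(P,\Xi)$ alone. At every point it equals $\langle\sigma,\tau,j,-1\rangle\cong\ZZ_2^4$. To see this, extend $\psi$ through $f^*JH\times P\to J\tC$ (kernel $K(\Xi)$) to a polarised automorphism of $J\tC$, and apply Strong Torelli to the non-hyperelliptic $\tC$. The dimensions and restricted types of the fixed loci of elements of $B$ then separate genuine from artificial involutions. This canonically picks out the Jacobians $JC_{j\alpha}$ of the quotient curves (with $(2,\dots,2)$ polarisation) and the common piece ($JH'$, $E$ or $F$).

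\paragraph{Gap in Step 3.} You flag Step 3 yourself as the main obstacle, and it is left unproven. Three things are missing: the identification of the isogeny kernel with labelled Weierstrass differences, the passage from $j$-invariants to $\lambda$-invariants, and the exclusion of alternative reassemblies.

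The paper never reconstructs the eight branch points. Instead it proceeds as follows:
\begin{itemize}
\item It recovers two quotient curves of $\tC$ by Torelli: $C_{j\tau},C_{j\sigma\tau}$ in I.2, $C_j,C_{j\sigma}$ in II.1, and $C_{j\sigma},C_{j\tau}$ in II.2.
\item It obtains their double covers of the common quotient from the equal images of that quotient's Jacobian, via Lemma \ref{key_lemma}.
\item It rebuilds $\tC$ as the normalised fibre product.
\item It recovers $\langle\sigma,\tau\rangle$ by lifting the hyperelliptic involution.
\end{itemize}
This route sidesteps the cross-ratio problem entirely.

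\paragraph{Minor errors.} You have swapped the labels: I.1 and I.2 are non-isotropic, while II.1 and II.2 are isotropic. Also, $K(\Xi)\cong\ZZ_2^4\times\ZZ_4^2$ for type $(1,1,1,2,2,4)$, and $K(\Xi)\cong\ZZ_4^4$ for type $(1,1,1,1,4,4)$.
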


Actually, the injectivity also holds on the entire moduli space $\cRH_3^{V_4}$ since the dimensions of the factors appearing in the isotypical decomposition of the associated Prym variety,  depends on the type of the Klein covering and it is different for each irreducible component. 

The general idea of the proof, with some variations in each case, is the following. A Prym variety $(P,\Xi)$ in the image of the Prym map determines a group of automorphisms of $P$ isomorphic to 
$\ZZ_2^4$ (see Proposition \ref{autoofP}). This gives us the isotypical decomposition of $P$ and one can identify the Jacobians by means of the type of the restricted polarisation and using the action of the involutions on the kernel of the polarisation.  With this information we can reconstruct the quotients curves and finally recover $\tC$ as a fibered product.  

As an application, we can show the following theorem for Klein coverings over {\it any} curve of genus 3, see Theorem \ref{FullKlein}.
\begin{thm}
    The Klein Prym maps $$
\cP^{iso}_3 :  \cR_3^{iso} \ra  \cA_6^{(1,1,1,2,2,4)} \qquad \cP^{ni}_3 :  \cR_3^{ni} \ra  \cA_6^{(1,1,1,1,4,4)}
$$ are generically finite. Moreover they are of degree $1$ on images of coverings of hyperelliptic curves of types $I.2, II.1, II.2$.
\end{thm}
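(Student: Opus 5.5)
The plan is to obtain generic finiteness from a dimension count combined with semicontinuity of fibre dimension, using the hyperelliptic loci as the places where the fibres are already known to be small. The moduli spaces $\cR_3^{iso}$ and $\cR_3^{ni}$ are finite covers of $\cM_3$, so each irreducible component has dimension $6$. The hyperelliptic loci $\cRH_3^{iso}$ and $\cRH_3^{ni}$ are divisors of dimension $5$, each with two irreducible components by Lemma \ref{4components}. Generic finiteness of $\cP$ on a component $\mathcal{Z}$ of $\cR_3^{iso}$ or $\cR_3^{ni}$ means exactly that $\dim \cP(\mathcal{Z}) = 6$.

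\textbf{Step 1: finiteness of fibres at hyperelliptic points.} By the first theorem (injectivity on each component of $\cRH_3^{V_4}$, for types I.2, II.1, II.2 proved in this paper and I.1 in \cite{BO24}), $\cP|_{\cRH}$ is injective. This alone does not bound the full fibre $\cP^{-1}(\cP(x))$ for $x\in\cRH$, since non-hyperelliptic coverings could have the same Prym. To exclude this, I would use the structure behind Proposition \ref{autoofP}. A Prym coming from a hyperelliptic base carries the extra involution induced by the hyperelliptic involution, giving the group $\ZZ_2^4$. For a general non-hyperelliptic covering only $V_4$ acts, so the isotypical decomposition of $P$ (the three Pryms of the intermediate double covers) is coarser.

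Concretely, suppose $\cP(y)=\cP(x)$ with $y$ non-hyperelliptic. Then the decomposition of $P$ recorded by the canonical $V_4$-action forces the three intermediate double-cover Pryms of $y$ to coincide with those of $x$. I expect this is best argued through the Prym maps of the quotient double coverings, which are known to be generically injective or finite in genus $3$ to $5$. This step, together with the argument that $\{x\}$ is an isolated point of its fibre, is the main obstacle. The first thing I would try is the following: the hyperelliptic involution lifts and acts as $-1$ on $P$ composed with a deck involution, giving an extra automorphism. The locus of abelian sixfolds with this automorphism group has dimension $5$, and its preimage is closed. On that preimage we can then argue by injectivity on $\cRH$ plus properness.

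\textbf{Step 2: semicontinuity.} Let $\mathcal{Z}$ be a component of $\cR_3^{iso}$ or $\cR_3^{ni}$, and choose its closure meeting a component of $\cRH$ of type I.2, II.1 or II.2. If every fibre of $\cP|_{\mathcal Z}$ had positive dimension, then by upper semicontinuity every fibre through a point of $\mathcal{Z}\cap\cRH$ would be positive-dimensional, contradicting Step 1. Hence $\cP|_{\mathcal Z}$ is generically finite, and since the target of $\mathcal Z$ has dimension $6$ its image is $6$-dimensional. Components whose closure meets only the I.1 locus are handled the same way using \cite{BO24}.

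\textbf{Step 3: degree one.} Still under the assumption that $\cP(\mathcal Z\setminus\cRH)$ avoids $\cP(\cRH)$, that is, that no non-hyperelliptic covering has Prym isomorphic to one from $\cRH$, the fibre over $\cP(x)$ for $x$ of types I.2, II.1, II.2 is the single point $x$ by the injectivity theorem. This gives degree $1$ onto those image loci.
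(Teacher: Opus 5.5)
There is a genuine gap. Everything in your proposal hinges on Step 1: that a covering $x$ of type I.2, II.1 or II.2 is isolated in (indeed is all of) its fibre $\cP^{-1}(\cP(x))$ inside $\cR_3$. You never prove this. You list possible strategies, and then in Step 3 you simply assume that no covering outside $\cRH$ has a Prym isomorphic to one from $\cRH$. None of the suggested routes closes the gap:
generic injectivity of Prym maps of the intermediate double covers says nothing about the special fibre through $x$;
knowing that only $V_4$ acts for a \emph{general} non-hyperelliptic covering is not enough, since a fibre component through $x$ consists of special coverings;
the ``automorphism locus plus properness'' sketch presupposes exactly the missing fact, namely that the preimage of the locus of Pryms with the extra involutions lies in $\cRH$.
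Your remark that components meeting only the I.1 locus are ``handled the same way'' is also unfounded, because for I.1 the invariant below is the same as for non-hyperelliptic bases. Such components do not exist anyway, since $\cR_3^{ni}$ is irreducible.

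The missing idea is that the group $B=\{\psi\in\Aut(P,\Xi):\psi^2=\id,\ \psi_{|K(\Xi)}=\pm\id\}$ of Proposition \ref{autoofP} depends only on $(P,\Xi)$, and the proof of that proposition applies to \emph{every} Klein covering. Each $\psi\in B$ extends to a polarised automorphism of $J\tC$ acting as $\pm1$ on $f^*JH$. By Torelli, up to sign, it is induced by an automorphism of $\tC$ lifting either the identity of $H$ or an automorphism of $H$ acting as $-1$ on $JH$. If $H$ is not hyperelliptic, only the identity is possible, so $B=\langle\s,\tau,-1\rangle\cong\ZZ_2^3$. In case I.1 the lift $j$ is the hyperelliptic involution of $\tC$ and induces $-1$, so again $B\cong\ZZ_2^3$. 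For I.2, II.1 and II.2 one gets $B\cong\ZZ_2^4$. This invariant separates the images, and together with injectivity on each component it gives the degree one statement, i.e. your Steps 1 and 3 at once.

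For generic finiteness the paper needs none of this. The spaces $\cR_3^{ni}$ and $\cR_3^{iso}$ are irreducible (via level-$2$ structures), so each image is irreducible. Each image contains the images of the two components of the hyperelliptic locus. These are $5$-dimensional by injectivity and disjoint because the isotypical decompositions differ. An irreducible variety of dimension $5$ cannot contain two disjoint $5$-dimensional constructible subsets, so the image has dimension $6$. Your semicontinuity argument in Step 2 is correct, but only conditionally on Step 1.
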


The structure of the paper is as follows. In Section 1, we state basic definitions and key lemmas and show that the moduli space have 4 components, called I.1, I.2, II.1, II.2. Each one of the Sections 2,\ 3,\ 4 and 5 is devoted to one of these cases and to the proof of the injectivity of the corresponding  Prym map. Finally, Section 6 is devoted to Klein coverings of any genus 3 curve.

\subsection*{Acknowledgments}
The authors would like to thank Anatoli Shatsila for finding a reference that shortened considerably one of the proofs. The first author has been supported by the Polish National Science Centre project number 2024/54/E/ST1/00330. Some results of the paper were obtained during his visit to Humboldt University in Berlin. He would like to thank the university for hospitality.

\section{Preliminaries}
We start by setting up some general facts about Klein coverings.
\subsection{Top-down and bottom up perspectives}
In the paper, we focus on Klein coverings, i.e. \'etale Galois coverings $f:\tC\to H$ with the deck group of transformations isomorphic to to the Klein four-group.
As in \cite{BO24}, we consider the construction from two perspectives. 
We start with the bottom-up perspective. Let $H$ be a curve and  
$\left<\eta,\xi\right>\subset JH[2]$
a Klein four-group. Using \cite{Pardini} we can construct the following commutative tower of curves:
\begin{equation}\label{diag:Klein_cov_d}
\xymatrix@R=.9cm@C=.6cm{
& \tC \ar[dr] \ar[dl]_{h_\eta} \ar[d] & \\
C_{\eta}\ar[dr]_{k_\eta} &  C_{\xi} \ar[d] & C_{\eta +\xi} \ar[dl]\\
&H& 
}
\end{equation}
where $k_\eta:C_\eta\to H$ is a double covering constructed from $(H,\eta)$ and $h_\eta:\tC\to C_\eta$ is constructed from $(C_\eta,k^*_{\eta}(\xi))$. The other coverings are defined analogously.
We call $\tC\to H$ the Klein covering constructed from $(H,\left<\eta,\xi\right>)$.

On the other hand, one can look at the construction from top-down perspective. Consider $\tC$ with a  Klein subgroup of fixed-point free automorphisms $\left<\s,\tau\right>\subset \Aut(\tC)$.
Denoting by $C_\alpha$ the quotient curve $C/\left<\alpha\right>$, we obtain the following commutative diagram of curves
\begin{equation}\label{diag:Klein_cov_t}
\xymatrix@R=.9cm@C=.6cm{
& \tC \ar[dr] \ar[dl]_{h_\s} \ar[d] & \\
C_{\s}\ar[dr]_{k_\s} &  C_{\tau} \ar[d] & C_{\s\tau} \ar[dl]\\
&H=C/\left<\s,\tau\right>& 
}
\end{equation}

Both perspectives describe the same construction, so from now on we assume that the diagrams coincide and in particular $C_\eta=C_\s$. We will also freely change the perspective and we assume that the reader understands from the context and from the notation which perspective we are in.

We recall the following notation which is a generalisation of the notion of complementary abelian subvarieties. Let $A$ be an abelian variety and $A_1, A_2 \ldots, A_n \subseteq A$ be a set of abelian subvarieties with the induced polarisations. We denote the symmetric idempotent of $A_k$ by $\varepsilon_{k} \in \End_{\mathbb{Q}}(A)$ for $1 \leq k \leq n$. We write $A = A_1 \boxplus A_2 \boxplus \ldots \boxplus A_n$ if $\sum_{i=1}^n \varepsilon_i = 1.$ 
Denote by $P(k)$ the Prym variety corresponding to a map $k$. We recall the following basic fact.
\begin{lem}\cite[Prop 5.2.2]{LR}\label{basicisodec}
   The isotypical decomposition of $J\tC$ with respect to $\left<\s,\tau\right>$
is given by 
$$
  J\tC  =   f^*JH \boxplus h_\s^*P(k_\s) \boxplus h_\tau^*P(k_\tau) \boxplus h_{\s\tau}^*P(k_{\s\tau})
$$ 
\end{lem}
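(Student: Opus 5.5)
The plan is to work in the rational group algebra $\mathbb{Q}[G]$, where $G=\left<\s,\tau\right>\cong \ZZ_2^2$, acting on $J\tC$ via $\End_{\mathbb{Q}}(J\tC)$. Since $G$ is abelian with exponent $2$, every irreducible rational representation is a character $\chi: G\to\{\pm1\}$, and there are exactly four of them. For each $\chi$ we take the central idempotent
$$
e_\chi=\tfrac14\sum_{g\in G}\chi(g)\,g .
$$
These four idempotents are orthogonal and sum to $1$. Because each $g\in G$ is an automorphism of the principally polarised $J\tC$, its Rosati adjoint is $g^{-1}=g$. Hence each $e_\chi$ is a symmetric idempotent, and the decomposition $J\tC=\boxplus_\chi \Ima(e_\chi)$ is the isotypical decomposition in the sense of the notation $\boxplus$. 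What remains is to identify the four images.

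For the trivial character, $e_1=\tfrac14\sum_g g$, and $\sum_g g = f^*\circ \Nm_f$. So the image of $e_1$ equals $f^*\Nm_f(J\tC)=f^*JH$, using that $\Nm_f$ is surjective.

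Now take the character $\chi_\s$ with $\chi_\s(\s)=1$ and $\chi_\s(\tau)=\chi_\s(\s\tau)=-1$. Then
$$
e_{\chi_\s}=\tfrac14(1+\s)(1-\tau).
$$
We use $1+\s=h_\s^*\Nm_{h_\s}$, and on $JC_\s$ the involution $\tau$ descends to the covering involution $\iota$ of $k_\s$. This gives
$$
e_{\chi_\s}=\tfrac14\, h_\s^*(1-\iota)\Nm_{h_\s}.
$$
The image of $1-\iota$ on $JC_\s$ is $P(k_\s)$, so the image of $e_{\chi_\s}$ is contained in $h_\s^*P(k_\s)$. For the reverse inclusion, any $h_\s^*x$ with $x\in P(k_\s)$ is fixed by $\s$. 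It is also anti-invariant under $\tau$, because $x$ is $\iota$-anti-invariant up to a finite group and $P(k_\s)$ equals the connected component $(1-\iota)JC_\s$. Hence $e_{\chi_\s}$ acts on it as the identity, up to a finite error that disappears on the connected subvariety. The same computation, with the roles permuted, handles $\chi_\tau$ and $\chi_{\s\tau}$.

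The only point that needs care is this identification of images, which must be made on connected components and up to isogeny. I would handle it with the dimension check that Riemann–Hurwitz gives for étale coverings: with $g(H)=g$, one has $g(\tC)=4g-3$, $\dim P(k_\alpha)=g-1$, and
$$
g + 3(g-1) = 4g-3 .
$$
Given this, the three inclusions of images combined with $\sum_\chi e_\chi=1$ force equality. Everything else is routine; this is essentially \cite[Prop 5.2.2]{LR}.
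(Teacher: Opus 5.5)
Your proof is correct. The paper gives no argument of its own for this lemma. It simply imports it from \cite[Prop 5.2.2]{LR}, where it is a special case of the general machinery that decomposes the Jacobian of a Galois cover via central idempotents of $\mathbb{Q}[G]$ and identifies the isotypical pieces with pullbacks of Prym varieties of intermediate covers.

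You run that machinery by hand for $G=\ZZ_2^2$. This makes explicit the two facts the paper's $\boxplus$ notation needs. First, each $e_\chi$ is Rosati-symmetric (since $g'=g^{-1}=g$), so it is the symmetric idempotent of its image. Second, that image is the pulled-back Prym. The citation buys generality and saves space; your version is self-contained and elementary.

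One simplification is available. Because $\Nm_{h_\s}$ is surjective, your factorisation $4e_{\chi_\s}=h_\s^*(1-\iota)\Nm_{h_\s}$ already gives
\[
\Ima(4e_{\chi_\s})=h_\s^*\bigl((1-\iota)JC_\s\bigr)=h_\s^*P(k_\s)
\]
as an equality, exactly as in your treatment of the trivial character. So the reverse inclusion and the Riemann--Hurwitz dimension count are redundant, though both are valid.

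The hedging ``up to a finite group'' is also unnecessary. Points $x\in(1-\iota)JC_\s$ satisfy $\iota x=-x$ exactly, so $h_\s^*x$ lies in the $\chi_\s$-eigenspace exactly.
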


In order to simplify the notation, if the pullback map is an embedding, we abuse the notation by skipping the map (for example, we write $JC\subset JC'$) and if it is not an embedding, we mark the subvariety with a star (for example,  we write $JC^*\subset JC'$).

Looking at the construction from bottom-up perspective, we can distinguish two cases of Klein coverings. Recall the commutator map, \cite[\S 6.3]{BL}, that in case of Jacobians and twice the principal polarisation, we call the Weil pairing.
\begin{defn}
We call a Klein covering isotropic if the group $\left<\eta,\xi\right> \subset JC[2]$ defining the covering is isotropic with respect to the Weil pairing. Otherwise, we call the covering non-isotropic.
\end{defn}
Being isotropic or not determine the type of the induced polarisation and affects the decomposition of $J\tC$.
\begin{lem}\label{polonJH}
Let $\langle \eta, \xi \rangle \simeq \ZZ_2 \times \ZZ_2$ be a Klein subgroup of $JH[2]$.
If $\langle \eta, \xi \rangle$ is isotropic, respectively non-isotropic, with respect to the Weil form in $JH[2]$, the 
polarisation type of the restriction $\Xi=\tilde \Theta_{|P}$ 
to the corresponding Prym variety is $(1,\ldots,1,2,2,4,\ldots,4)$, respectively  $(1,\ldots,1,1,4,\ldots,4)$.
\end{lem}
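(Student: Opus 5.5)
Throughout, $H$ has genus $g$, $\tC$ has genus $4g-3$, and $P=P(f)$ is the connected component of $\Ker \Nm_f$, of dimension $3g-3$. I will compute the type of $\Xi$ from the kernel $K(\Xi)$ of the polarisation. The standard way to get it is from the complementary pair $f^*JH$ and $P$ in $J\tC$. The restriction of $\tilde\Theta$ to $f^*JH$ is $f^*\Theta_H$ pulled back through the isogeny $f^*:JH\to f^*JH$. The kernel of $f^*$ is exactly $\langle\eta,\xi\rangle$, since $f$ is étale Galois with group $\ZZ_2^2$ and $f^*$ kills precisely the line bundles defining the covering. Also $\tilde\Theta|_{f^*JH}$ is algebraically equivalent to $4\Theta_H$ descended.

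By \cite[Cor.~12.1.5]{BL}, for complementary subvarieties of a principally polarised abelian variety we have $K(\tilde\Theta|_{f^*JH})\cong K(\Xi)$ as groups with commutator pairing, up to sign. So it suffices to compute $K(\tilde\Theta|_{f^*JH})$. This group is the image under $f^*$ of $(f^*)^{-1}\bigl(K\bigr)$, where $K$ is the kernel of $4\Theta_H$ modulo the relevant part. Concretely, $K(\tilde\Theta|_{f^*JH})\cong G^{\perp}/G$, where $G=\langle\eta,\xi\rangle\subset JH[4]=K(4\Theta_H)$ and $G^{\perp}$ is the orthogonal complement for the commutator pairing $e^{4\Theta_H}$ on $JH[4]$. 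This is the standard descent of a polarisation along an isogeny with kernel $G$ isotropic for $e^{4\Theta_H}$.

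\emph{Isotropy.} I first check that $G$ is always isotropic for $e^{4\Theta}$ on $JH[4]$. For $x,y\in JH[2]$ we have $e^{4\Theta}(x,y)=e^{2\Theta}(x,y)^2=1$. Hence the descent is legitimate in both cases.

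\emph{Computing $G^\perp/G$.} Write $JH[4]\cong(\ZZ_4)^{2g}$ with a symplectic basis $a_i,b_i$ and $e(a_i,b_j)=i^{\delta_{ij}}$.

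In the isotropic case I choose $\eta=2a_1$ and $\xi=2a_2$. Then $G^\perp$ consists of the $x$ with $e(x,2a_1)=e(x,2a_2)=1$, which forces the $b_1,b_2$ coefficients to be even. So $G^\perp/G\cong(\ZZ_2)^2\times(\ZZ_2)^2\times(\ZZ_4)^{2(g-2)}$, where the $a_1,a_2$ coordinates are mod 2 and the $b_1,b_2$ coordinates are in $2\ZZ_4$. The induced pairing pairs $\bar a_1$ with $2b_1$ with value $-1$, so it is nondegenerate. This gives type $(1,\dots,1,2,2,4,\dots,4)$ with $g-2$ fours.

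In the non-isotropic case I choose $\eta=2a_1$ and $\xi=2b_1$. Then $G^\perp$ is given by even $a_1,b_1$ coefficients, so $G^\perp/G\cong(\ZZ_4)^{2(g-1)}$. This gives type $(1,\dots,1,4,\dots,4)$ with $g-1$ fours.

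Both types have $\dim P=3g-3$ entries. The first has $g-2$ fours and two twos. The second has $g-1$ fours. For $g=3$ these are $(1,1,1,2,2,4)$ and $(1,1,1,1,4,4)$, which matches the theorem.

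\emph{Reduction to normal forms.} The isotropic and non-isotropic normal forms above are justified because $Sp(2g,\ZZ)$ acts transitively on each of the two types of Klein subgroups of $JH[2]$. Any such lift is realised by an automorphism of $JH[4]$ preserving $e^{4\Theta}$.

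The main obstacle is justifying the identification $K(\tilde\Theta|_{f^*JH})\cong G^\perp/G$. Two points need care here. First, $\tilde\Theta|_{f^*JH}$ descends from $4\Theta_H$, i.e. $(f^*)^*\tilde\Theta\equiv f^*{}^{\wedge}\!\circ\phi_{\tilde\Theta}\circ f^*=\phi_{4\Theta_H}$, using $\Nm_f\circ f^*=4$. Second, the kernel of $f^*$ is exactly $G$. If the second point fails, a doubled kernel would appear, so I would verify it via the tower of double covers, using that each pullback of an étale double cover has kernel generated by the defining point.
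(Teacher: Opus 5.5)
Your proposal is correct and takes essentially the same route as the paper. Both identify $f^*JH$ with $JH/G$ via $f^*$ (kernel $G$, pulling $\tilde\Theta$ back to $4\Theta_H$), compute the type of the descended polarisation, and transfer it to $P$ by \cite[Cor. 12.1.5]{BL}; the only difference is that in the isotropic case the paper descends $2\Theta_H$ to get type $(1,1,2,\ldots,2)$ and then doubles it, whereas you compute $G^\perp/G$ for $e^{4\Theta_H}$ inside $JH[4]$ uniformly in both cases using symplectic normal forms.
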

\begin{proof}
Set $G:=\left<\eta,\xi\right>$.
Let $f^*:JH\to J\tC$ be the pullback map. Then $f^*\tilde{\Theta}=4\Theta_H$ and $\ker(f^*)=G$, see \cite[Prop 11.4.3]{BL}.

Consider the quotient map $\pi: JH\to JH/G$. If $G$ is isotropic with respect to the Weil form on $JH[2]$ then, in order to check that  $\pi$ is a polarised isogeny,  by \cite[Prop 6.3.5]{BL} it is enough to show that we can endow $JH$ with a polarisation $2\Theta$ (of type $(2,\ldots,2)$). Since $\pi$ is of degree 4, we find  that $JH/G$ is of type $(1,1,2,\ldots,2)$. Now, since  $f^*\tilde{\Theta}=4\Theta_H$, the restricted polarisation on $JH/G$ is not primitive and therefore it is of type $(2,2,4,\ldots,4)$.
If $G$ is non-isotropic, then one needs a polarisation of type $(4,\ldots,4)$ to get $\pi$ a polarised isogeny and hence $JH/G$ is $(1,4,\ldots,4)$ polarised. 

Now, the result follows from the fact that the complementary abelian subvariety has complementary type, see \cite[Cor. 12.1.5]{BL}.
\end{proof}

\begin{lem}
\label{kernel}
    The (restricted) kernel $\ker h_{\sigma}^*|_{P(h_{\sigma})}$ is trivial in the non-isotropic case, hence $P(k_{\sigma})$ is embedded in $J\tC$. The kernel has cardinality $2$ in the isotropic case and we have $P(k_{\sigma})^*\subset J\tC$. 
\end{lem}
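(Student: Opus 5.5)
The plan is to reduce the statement to a criterion for when a pulled-back $2$-torsion point lies in the Prym variety of an étale double cover. I read the statement as concerning $\ker h_\s^*|_{P(k_\s)}$, with $P(k_\s)\subset JC_\s$. The first step is to compute the full kernel of $h_\s^*:JC_\s\to J\tC$. Since $h_\s$ is the étale double cover attached to $k_\s^*(\xi)\in JC_\s[2]$, \cite[Prop 11.4.3]{BL} gives $\ker h_\s^*=\{0,k_\s^*\xi\}$. Here $k_\s^*\xi\neq 0$, because $\ker k_\s^*=\{0,\eta\}$ and $\xi\neq\eta$. So the restricted kernel is either trivial or equal to $\{0,k_\s^*\xi\}$, depending on whether $k_\s^*\xi\in P(k_\s)$.

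The main step, and the expected obstacle, is the following criterion. For an étale double cover $k:C\to H$ defined by $\eta$ and any $x\in JH[2]$,
$$k^*x\in P(k)\iff e(x,\eta)=0,$$
where $e$ is the Weil pairing. Granting it, $k_\s^*\xi\in P(k_\s)$ exactly when $e(\xi,\eta)=0$, which is exactly the isotropic case. This gives both assertions: in the non-isotropic case the restricted kernel is trivial, so $P(k_\s)$ embeds in $J\tC$; in the isotropic case it has order $2$, so we write $P(k_\s)^*\subset J\tC$.

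To prove the criterion, I would first observe that $\Nm_k(k^*x)=2x=0$, so $k^*x\in\ker\Nm_k$, which has exactly two connected components (Mumford; see \cite[\S 12.4]{BL}). Next, $k^*JH\cap P(k)=\ker(\Theta|_{k^*JH})$ sits inside $k^*JH[2]\cong JH[2]/\langle\eta\rangle$, which has order $2^{2g-1}$. By the complementary-type argument used in Lemma \ref{polonJH} (\cite[Cor. 12.1.5]{BL}), this intersection has order $2^{2g-2}$, so it is an index-two subgroup of $k^*JH[2]$.

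It remains to identify this subgroup with $k^*(\eta^\perp)$. I would check directly that $k^*x\in\operatorname{Im}(1-\iota)$ when $e(x,\eta)=0$. Alternatively, I would use the classical description of the two components of $\ker\Nm_k$ via the parity of $h^0$: the component containing $k^*x$ is detected by the parity of $h^0(k^*(L\otimes x))$ against $h^0(k^*L)$, and this parity change equals $e(x,\eta)$. I would cite this from Mumford's "Prym varieties I" rather than reprove it.
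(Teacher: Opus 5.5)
Your proposal is correct and is essentially the paper's proof. The paper also reduces to $\ker h_\s^*=\langle k_\s^*\xi\rangle$ and the identity $k^*JH\cap P(k)=k^*(\langle\eta\rangle^{\perp})$, stating that identity without proof; you additionally justify it, and your parity argument is sound. In fact, the polarised-isogeny argument you already invoke gives the identification directly, without the unspecified ``direct check'': the polarisation on $k^*JH\cong JH/\langle\eta\rangle$ pulls back to $2\Theta_H$, so its kernel, which equals $k^*JH\cap P(k)$, is $k^*(\langle\eta\rangle^{\perp})$.
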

\begin{proof}
    Since the coverings are \'etale, we have $\ker f^* = \langle \eta, \xi \rangle$, $\ker k_{\eta}^* = \langle \eta \rangle$ and $\ker h_{\eta}^* = \langle k_\eta^*(\xi) \rangle$. Since $$k_{\eta}^*JH \cap P(k_{\eta}) = k_{\eta}^*((\ker k_{\eta}^*)^{\bot})$$ it follows that in the non-isotropic case $k_{\eta}^*(\zeta) \notin P(h_{\eta})$ as $\xi \notin (\ker k_{\eta}^*)^{\bot}$.
    On the other hand, in the isotropic case  $\xi \in (\ker k_{\eta}^*)^{\bot}$, so $k_{\eta}^*(\xi) \in P(k_{\eta})$ and hence $h^*_{|P}(k_{\eta})$ is non-injective.
\end{proof}

In some cases, the Prym varieties are Jacobians themselves. We will use the following Lemma from \cite{BNOS} to recover some coverings from embeddings between Jacobians. 
\begin{lem}\label{key_lemma}
    Let $h:C'\to C$ be a covering that does not factorise via an \'etale covering $C'\to C''\to C$.
        Then $h^*: JC\to JC'$ is an embedding and any other embedding $i:JC\to JC'$ with $\Ima i=\Ima  h^*$ is (up to an isomorphism of $JC$) given by $h^*$.
\end{lem}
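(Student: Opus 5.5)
The lemma is quoted from \cite{BNOS}, so I would reprove it with standard facts on pullbacks of line bundles and on Albanese maps. It has two parts: that $h^*$ is an embedding, and that an embedding with the same image must be $h^*$ up to an automorphism of $JC$.

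\emph{Embedding.} Recall \cite[Prop. 11.4.3]{BL}: the kernel of $h^*:JC\to JC'$ is dual to the Galois group of the maximal abelian unramified subcovering through which $h$ factors. Concretely, if $L\in\ker h^*$ is a nontrivial line bundle of order $n$, then the normalisation of $\operatorname{Spec}\bigoplus_{i=0}^{n-1}L^{-i}$ gives an étale cyclic covering $C''\to C$. Since $h^*L$ is trivial, $h$ factors through $C''$. This contradicts the hypothesis, so $h^*$ is injective. Hence $h^*$ is an isomorphism onto its image, which is an abelian subvariety of $JC'$.

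\emph{Uniqueness.} Let $i:JC\to JC'$ be an embedding with $\Ima i=\Ima h^*=:B$. Then $\varphi:=(h^*)^{-1}\circ i$ is an automorphism of $JC$ as an abelian variety, and $i=h^*\circ\varphi$. So, up to an automorphism of $JC$, the map $i$ is $h^*$. If one wants the stronger statement that $\varphi$ is (up to sign) induced by an automorphism of $C$, one needs polarisations. I would use that $i^*\Theta'|_B$ and $(h^*)^*\Theta'=(\deg h)\Theta$ agree when $i$ is an embedding of polarised abelian varieties. Then $\varphi$ preserves $\Theta$, and by Torelli $\varphi=\pm\alpha_*$ for some $\alpha\in\Aut(C)$.

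\emph{Recovering the covering.} The dual statement is what is used to recover coverings. The norm $\Nm_h=(h^*)^\vee$ composed with the Abel–Jacobi maps recovers $h$. Indeed, $C'\to JC'\to JC$, given by $p\mapsto \Nm_h(p-p_0)$, equals $h$ followed by the Abel–Jacobi map of $C$, up to translation. Its image is a translate of the curve $C$. Therefore knowing $B$, together with the identification of $B^\vee$ as a quotient of $JC'$, recovers the map $C'\to C$.

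\emph{Main obstacle.} The main subtlety is the first step: showing that every element of $\ker h^*$ gives an étale subcovering, including the torsion bookkeeping. I would cite \cite[Prop. 11.4.3]{BL} directly, where the kernel is identified with the dual of $\mathrm{Gal}$ of the maximal abelian étale subcovering. The non-factorisation hypothesis then forces that group to be trivial. The rest is formal.
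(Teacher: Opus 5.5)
Your argument is correct. The paper gives no proof of its own here; it defers entirely to \cite[Lemma 2.4]{BNOS}. So your write-up is a self-contained substitute, not a variant of an argument in the text.

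Your three ingredients are the natural ones.
\begin{enumerate}
\item Injectivity of $h^*$ follows from \cite[Prop.~11.4.3]{BL}. Your direct version via $\operatorname{Spec}\bigoplus_{i=0}^{n-1}L^{-i}$ works, but you should say why every $L\in\ker h^*$ is torsion. This holds because $\Nm_h\circ h^*$ is multiplication by $\deg h$, so $\ker h^*$ is finite. No normalisation is needed: a line bundle of exact order $n$ already gives a smooth connected \'etale cyclic cover.
\item Once $h^*$ is an embedding, the uniqueness clause as literally stated is a tautology. Indeed $i=h^*\circ\varphi$ with $\varphi=(h^*)^{-1}\circ i\in\Aut(JC)$.
\item The covering $h$ is recovered from the subvariety $B=\Ima h^*$ via $C'\to JC'\cong\widehat{JC'}\to\widehat{B}$. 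For $g(C)\ge 1$ the image is a copy of $C$, and the induced map $C'\to C$ is $h$ up to an isomorphism of $C$.
\end{enumerate}

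What your version adds over the bare citation is that it shows the third step is the content the paper actually uses later. Examples are recovering $C_{j\tau}\to H'$ from $JH'\subset JC_{j\tau}$, and $C_j\to E$ from $E\subset JC_j$. It also shows this recovery depends only on $B$, since the dual of the inclusion $B\hookrightarrow JC'$ is canonical, and not on any chosen embedding $i$.

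Your polarised strengthening $\varphi=\pm\alpha_*$ needs the extra hypothesis $i^*\Theta'\equiv(\deg h)\Theta$. That hypothesis is not part of the statement and the paper never uses the strengthening, so you can drop it.
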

\begin{proof} 
   See \cite[Lemma 2.4]{BNOS}. 
\end{proof}
We also need a way to check whether the pullback map is an embedding.
\begin{cor}\label{coremb}
    Let $h:\tC\to H$ be a Klein covering. Then $h^*:JH\to J\tC$ is an embedding if and only if none of the arrows at the bottom of Diagram \ref{diag:Klein_cov_t} is \'etale. In particular, it is enough to compute the genera of the quotient curves and show that none of them equals $2g(H)-1$.
\end{cor}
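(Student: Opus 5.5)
The proof splits into two directions. Both rest on the fact that the intermediate coverings of a $\left<\s,\tau\right>$-Galois covering correspond to the subgroups of $\left<\s,\tau\right>$. Here I read ``Klein covering'' as a possibly ramified Galois covering with group $\left<\s,\tau\right>$, as the statement only makes sense in that setting. Writing $h$ for $\tC\to H$, any factorisation $\tC\to C''\to H$ with $C''\neq\tC,H$ has $C''=C_\alpha$ for some $\alpha\in\{\s,\tau,\s\tau\}$, and $C''\to H$ is then one of the bottom arrows $k_\alpha$ of Diagram \ref{diag:Klein_cov_t}.

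\emph{If none of the $k_\alpha$ is étale, then $h^*$ is an embedding.} First, $h$ itself is not étale: if it were, each $k_\alpha$ would be a quotient of an étale covering, since $h=k_\alpha\circ h_\alpha$, and hence étale. Next, any étale covering $C''\to H$ through which $h$ factors must be one of the $k_\alpha$ or $h$ itself, by the subgroup correspondence above. Since none of these is étale, $h$ does not factor through any étale covering. Lemma \ref{key_lemma} then shows that $h^*$ is an embedding.

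\emph{If some $k_\alpha$ is étale, then $h^*$ is not injective.} In this case $k_\alpha$ is the étale double covering attached to a nonzero $\eta\in JH[2]$, and $\ker k_\alpha^*=\left<\eta\right>$, as recalled in the proof of Lemma \ref{kernel}. Since $h^*=h_\alpha^*\circ k_\alpha^*$, it follows that $\eta\in\ker h^*$.

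The final sentence of the corollary follows from Riemann--Hurwitz. A double covering $C_\alpha\to H$ with $2r$ branch points satisfies $g(C_\alpha)=2g(H)-1+r$. Hence $k_\alpha$ is étale if and only if $g(C_\alpha)=2g(H)-1$, and it suffices to compute the genera of the three quotient curves.

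The only point needing care is the converse direction of Lemma \ref{key_lemma}, namely the use of its hypothesis that $h$ does not factor through an étale covering. This requires that \emph{every} étale intermediate covering be among the $k_\alpha$, including intermediate coverings that are not Galois over $H$. I would justify this by the Galois correspondence: every intermediate curve is a quotient $\tC/K$ for a subgroup $K\subseteq\left<\s,\tau\right>$, and $\left<\s,\tau\right>$ has exactly three nontrivial proper subgroups.
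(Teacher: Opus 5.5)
Your proof is correct, and it is exactly the argument the paper has in mind: the paper states this corollary without proof, as an immediate consequence of Lemma \ref{key_lemma}. You supply the implicit details, namely the Galois correspondence showing that the only possible \'etale intermediate coverings are $h$ and the $k_\alpha$, the non-injectivity of $h^*=h_\alpha^*\circ k_\alpha^*$ via $\ker k_\alpha^*=\langle\eta\rangle$ for the converse, and Riemann--Hurwitz for the genus criterion.

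One small remark on your setup: for an \'etale Klein covering the equivalence still holds, trivially, since both sides are false. So reading ``Klein covering'' as possibly ramified is what makes the statement useful, not what makes it meaningful.
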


\begin{defn}
By Strong Torelli Theorem (see \cite[Ex. 11.19]{BL}) for non-hyperelliptic curves $\tC$, we have $\Aut(J\tC)/(-1)=\Aut(\tC)$. Motivated by this result, we call an automorphism $\s\in \Aut(J\tC)$ {\it genuine} if it is induced by an automorphism of $\tC$ and {\it artificial} if it does not come from an automorphism on $\tC$. 
\end{defn}
\begin{rem}
    Note that if $\sigma\in \Aut(J\tC)$ is an involution, then $(1+\s)(1-\s)=0$, so $\Fix(\s)^0=\Ima(1+\s)=\ker(1-\s)^0$. This shows that the fixed locus of an artificial involution is the Prym variety of the covering given by a genuine involution.
\end{rem}

\subsection{Hyperelliptic curves}
Let $H$ be a smooth hyperelliptic genus $g$ curve. A Klein covering of $H$ is given by a 
Klein subgroup $\langle \eta, \xi \rangle \simeq \ZZ_2^2 \times \ZZ_2^2 $ of $JH[2]$. 
In the hyperelliptic case all the 2-torsion points 
can be written in terms of the set of Weierstrass points 
$W:=\{w_1, \ldots ,w_{2g+2}\}
\subset H $. We refer to \cite{DO} for the following description. 
 Given a subset $S \subset I=\{1, \ldots, 2g+2\}$ the divisor
\begin{equation} \label{W.pts}
\alpha_S= \sum_{i \in S} w_i - |S| \cdot w_{2g+2}  
\end{equation}
defines an element in $JH[2]$. Observe that $\alpha_S = \alpha_{I \setminus S}$.  
Denote by $E_g$  the $\FF_2$-vector space of functions $I \ra \FF_2$ having an even number of $0$'s and $1$'s modulo the constant functions $\{0,1\}$.  The elements of $E_g$ are represented by the subsets of even cardinality (up 
to complementary subset) and clearly $E_g \simeq \FF_2^{2g}$.  The correspondence $S \mapsto \alpha_S$ gives an isomorphism 
$E_g \simeq JC[2]$, so all the 2-torsion points on a hyperelliptic Jacobian are of the form \eqref{W.pts}. Moreover, $E_g$ carries a symmetric bilinear form 
$$
e: E_g \times E_g \ra \FF_2, \qquad e(S,T) = |S\cap T| \mbox{ mod } 2,
$$
which is a non-degenerated symplectic form. Under the above isomorphism, this form corresponds to the Weil pairing on the 2-torsion points on $JH$, for details see \cite[Section 5.2]{DO}.
A subspace $G \subset E_g$ is {\it isotropic} with respect to $e$ if $e(\alpha_S,\alpha_T) =  0$ (that is, if the divisors $\alpha_S $ and $\alpha_T$  share
an even number of points) for all $\alpha_S,  \alpha_T \in G $; otherwise the subspace $G$ is called {\it non-isotropic}.  

Since $2w_i$ is linearly equivalent to $2w_j$ for any $i,j$, we can also write any 2-torsion point on $JH[2]$ as a sum of differences of Weierstrass points. This presentation is better for the purpose of the paper as it does not involve choosing a base point $w_{2g+2}$.

\subsection{Four cases of Klein coverings}
From now on, we assume $H$ is a hyperelliptic curve of genus 3. We distinguish two non-isotropic cases of Klein coverings and two isotropic ones. For each of them we give an example of the generators of the Klein subgroup 
$\langle \eta, \xi \rangle$.
\bigskip

{\it Case I.1}. For $\eta= w_1-w_2,\ \xi= w_1-w_3$ the corresponding covering $\tilde C$ is hyperelliptic (\cite[Prop 4.4]{BO19}). The subgroup $\langle \eta, \xi \rangle$
is non-isotropic. The number of such covers for a fixed curve $H$ is the number of choices of triples among the 8 Weierstrass points, that is 56.  

{\it Case I.2}. For $\eta= w_1-w_2, \ \xi= w_1-w_3+w_4-w_5$, the subgroup $\langle \eta, \xi \rangle$ is non-isotropic. 

{\it Case II.1}. For $\eta= w_1-w_2+w_3-w_4, \ \xi= w_1-w_2+w_5-w_6$, the subgroup $\langle \eta, \xi \rangle$ is isotropic.

{\it Case II.2}. For $\eta= w_1-w_2, \ \xi= w_3-w_4$, the subgroup $\langle \eta, \xi \rangle$ is isotropic.
\bigskip

Let $\cRH_3^{V_4}$ be the moduli space parametrizing Klein coverings over hyperelliptic curves of genus 3. It is also the parameter space of triples:
$$
\cRH_3^{V_4}:= \{ [C, \langle \eta, \xi \rangle] \ : \ [C] \in \cM_3,\ \langle \eta, \xi \rangle\subset JC, \ \langle \eta, \xi \rangle \simeq \ZZ_2 \times \ZZ_2 \}.
$$

\begin{lem}\label{4components}
The moduli space $\cRH_3^{V_4}$ consists of four irreducible components, corresponding to the configuration of the Weierstrass points in the generators $ \eta, \ \xi$ as shown 
in I.1, I.2, II.1 and II.2. Moreover, the degree of the forgetful 
map  $\cRH_3^{V_4} \ra \cH_3$ on each component is 56, 280, 105, 210, respectively. 
\end{lem}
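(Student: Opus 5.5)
The plan is to reduce everything to a combinatorial count inside the symplectic space $E_3\simeq \FF_2^6$, and then use monodromy to get irreducibility. Over the hyperelliptic locus $\cH_3$, the fibre of $\cRH_3^{V_4}\ra\cH_3$ over $[H]$ is the set of Klein subgroups of $JH[2]\simeq E_3$. The monodromy of the family of hyperelliptic curves acts on $JH[2]$ through the symmetric group $S_8$ permuting the Weierstrass points. So the irreducible components of $\cRH_3^{V_4}$ (which is finite étale over the irreducible space $\cH_3$, away from curves with extra automorphisms) correspond to $S_8$-orbits of Klein subgroups of $E_3$. The degree of the map on a component is then the size of the corresponding orbit.

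\textbf{Step 1: classify the orbits.} Nonzero elements of $E_3$ are even subsets of $\{1,\dots,8\}$ modulo complement. Hence they are either pairs ($\cong$ sextuples), 28 of them, or quadruples modulo complement, 35 of them; this totals $63=2^6-1$. The $S_8$-orbit of a Klein group $\{0,a,b,a+b\}$ is determined by the types of its three nonzero elements and how they overlap. I would enumerate all configurations.
\begin{enumerate}
\item Three pairs with $a+b$ a pair: this forces $a=\{1,2\}$, $b=\{1,3\}$, $a+b=\{2,3\}$, i.e.\ case I.1, which is non-isotropic.
\item Two disjoint pairs $\{1,2\},\{3,4\}$ with sum the quadruple $\{1,2,3,4\}$: case II.2, isotropic.
\item One pair and two quadruples: the pair $\{1,2\}$ and quadruple $\{1,3,4,5\}$ meet oddly, giving case I.2, non-isotropic. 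If instead the pair is contained in the quadruple (or disjoint from it, which is the same up to complement), the sum is again a quadruple. This gives the configuration $\{1,2\},\{1,2,3,4\},\{3,4\}$, which is case II.2 again.
\item Three quadruples: the sum of two quadruples modulo complement is either a pair or a quadruple depending on $|S\cap T|$. Quadruples meeting in 2 points sum to a quadruple, giving case II.1, isotropic. Quadruples meeting in 1 or 3 points sum to a sextuple, i.e.\ a pair, which lands back in case I.2.
\end{enumerate}
Checking this carefully, with complements taken into account, shows that exactly the four orbits I.1, I.2, II.1, II.2 occur.

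\textbf{Step 2: count each orbit.} I would count each orbit either directly or via the orbit–stabilizer theorem.
\begin{itemize}
\item I.1: a choice of a 3-subset, giving $\binom83=56$.
\item II.2: an unordered pair of disjoint pairs, giving $\binom82\binom62/2=210$.
\item I.2: a pair $\{i,j\}$ (28 choices) together with the two quadruples. These are determined by a splitting of the remaining 6 points into $3+3$ with one of $i,j$ attached. That gives $\binom63/2=10$ splittings, and $28\cdot 10=280$.
\item II.1: isotropic Klein groups consisting of three quadruples, which correspond to partitions of 8 points into four pairs. Explicitly $\{A\cup B, A\cup C, B\cup C\}$ with $D$ the complement, giving $8!/(2^4 4!)=105$.
\end{itemize}
As a sanity check, the total is $56+280+105+210=651$. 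This equals the number of 2-dimensional subspaces of $\FF_2^6$, namely $(63\cdot 62)/(3\cdot 2)=651$, so no configuration was missed.

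\textbf{Step 3: irreducibility.} Each orbit gives an irreducible component, because the monodromy of the universal hyperelliptic family surjects onto $S_8$, acting via its natural action on $E_3$. Concretely, the base is a configuration space of 8 points on $\PP^1$ modulo $PGL_2$, whose braid-group monodromy on the branch points is the full $S_8$. The orbits are exactly the connected components of the finite cover, and isotropy distinguishes I from II.

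I expect the main obstacle to be the case analysis in Step 1, specifically keeping track of complements so that no orbit is overcounted or missed. The count identity $651$ is what I would rely on to confirm completeness.
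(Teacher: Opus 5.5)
Your proof is correct, but it reaches the statement by a somewhat different route than the paper.

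The paper does not argue via monodromy. It obtains irreducibility by identifying each locus with an explicit space of marked points on $\PP^1$: namely $\PP'_{2,3,3}$, $\PP'_{2,2,2,2}$ and $\PP'_{2,2,4}$ (Propositions \ref{233}, \ref{2222}, \ref{224}), and for I.1 the description from \cite{BO24} (cf.\ Remark \ref{thmI.1}). Each of these is a quotient of the irreducible space of ordered $8$-tuples in $\PP^1$ by the stabiliser of the configuration. The degrees are then obtained by counting choices of Weierstrass points.

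This is the same mechanism as your $S_8$-monodromy argument, seen from the cover that trivialises the monodromy. What the paper's version buys is that it parametrises all curves at once, including those with extra automorphisms, which your étale argument sets aside and would have to treat at the stack level. It also provides the concrete branch-point models with which Sections 2--4 begin.

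Your version is more uniform and makes exhaustiveness explicit. The case split by the number of pairs among $a,b,a+b$, together with the check $56+280+105+210=651$, proves that I.1, I.2, II.1, II.2 are the only orbits. The paper records this only implicitly, through the totals $336$ and $315$ of non-isotropic and isotropic subgroups. Your counts $28\cdot 10=280$ and $8!/(2^4\,4!)=105$ are also cleaner than the paper's displayed formulas, which as written evaluate to $28$ and $420$.

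There is one slip in your item (3). When the pair lies inside the quadruple, their sum is the pair $\{3,4\}$, not a quadruple. Your own configuration $\{1,2\},\{1,2,3,4\},\{3,4\}$ already shows this.
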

\begin{proof}
The irreducibility of components of $\cRH_3^{V_4}$ follows from
Propositions \ref{233}, \ref{2222}, \ref{224}
and \cite[Cor. 4.5]{BO24}, which give 
a description of the locus corresponding to each 
case I.1, I.2, II.1 and II.2. The  degrees correspond to the ways of choosing the generators of the Klein of group in terms of the 8 Weierstrass points.
Thus, these degrees are computed by 
$$
{8 \choose 3}= 56, \quad \frac{1}{2}{8 \choose 3}=280, \quad \frac{1}{6}{8 \choose 6}{6 \choose 2}{4 \choose 2}= 105,  \quad\frac{1}{2}{8 \choose 4}{4 \choose 2} = 210.
$$
Notice that the number of isotropic Klein subgroups is $\frac{63\cdot 30}{6}=315$ and the number of non-isotropic Klein subgroups is $\frac{63\cdot 33}{6}=336.$
\end{proof}

We will use the following crucial result on double coverings of hyperelliptic curves which rephrases a construction by Mumford. 
\begin{prop}\cite[p. 346]{M1} \label{Mumford}
Let $u_1,\ldots, u_{2k+2}, v_1, \ldots, v_{2g-2k}$ be a set of points in $\PP^1$.
Let $H_g$ be a genus $g$ hyperelliptic curve given by the double cover branched in all the points and let $C_k$, respectively $C_{g-k-1}$, be
the hyperelliptic curve of genus $k$, respectively $g-k-1$,  given by the double covering branched on the points $u_i$,  respectively $v_j$.
Then the normalisation of the fiber product $\tC$ of any two of these curves fits in the  commutative diagram: 
\begin{equation}\label{diag:Mum}
\xymatrix@R=.9cm@C=.6cm{
& \tC \ar[dr] \ar[dl] \ar[d] & \\
C_{k}\ar[dr] &  H_{g} \ar[d] & C_{g-k-1} \ar[dl]\\
&\PP^1& 
}
\end{equation}
The 2:1 map $\tC\to H_g$ is given by the line bundle $\cO_H(u_1-u_2+\ldots+u_{2k+1}-u_{2k+2})$. 
Moreover, a lift of the hyperelliptic involution from $H_g$ fixes preimages of $u_1,\ldots, u_{2k+2}$ whereas the other lift fixes preimages of $v_1,\ldots, v_{2g-2k}$.
\end{prop}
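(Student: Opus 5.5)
The plan is to work entirely with function fields, where the whole diagram becomes a statement about a biquadratic extension of $\CC(x)$. After a Möbius transformation we may assume that $\infty$ is not among the points. Put $U(x)=\prod_i (x-u_i)$ and $V(x)=\prod_j (x-v_j)$; these have even degrees $2k+2$ and $2g-2k$. Then the three curves are
\[
C_k:\ y_1^2=U,\qquad C_{g-k-1}:\ y_2^2=V,\qquad H_g:\ y^2=UV.
\]
Let $K=\CC(x,y_1,y_2)$. Since $U$, $V$ and $UV$ are non-squares in $\CC(x)$, the extension $K/\CC(x)$ is Galois with group $\ZZ_2\times\ZZ_2$. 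It is generated by
\[
\s_1:(y_1,y_2)\mapsto(-y_1,y_2),\qquad \s_2:(y_1,y_2)\mapsto(y_1,-y_2).
\]
Its three quadratic subfields are $\CC(x,y_1)$, $\CC(x,y_2)$ and $\CC(x,y_1y_2)$. These are the function fields of $C_k$, $C_{g-k-1}$ and $H_g$ respectively, where we set $y=y_1y_2$.

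Let $\tC$ be the smooth projective model of $K$. The compositum of any two of the three quadratic subfields is all of $K$. Hence the normalisation of the fibre product of any two of the curves is birational to $\tC$, and since it is smooth and projective it equals $\tC$. This gives diagram \eqref{diag:Mum}.

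Next I would determine the ramification, which I expect to be the only step needing care.
\begin{itemize}
\item At a point $u_i$, the subfield $\CC(x,y_2)$ is unramified, while the other two subfields are ramified. So the inertia group over $u_i$ is the order $2$ subgroup fixing $\CC(x,y_2)$, namely $\langle\s_1\rangle$.
\item Symmetrically, over each $v_j$ the inertia group is $\langle\s_2\rangle$.
\item Over $\infty$ and all other points of $\PP^1$ nothing ramifies, because the degrees of $U$ and $V$ are even.
\end{itemize}
Neither inertia group contains $\s_1\s_2$, which generates $\mathrm{Gal}(K/\CC(x,y))$. Hence $\tC\to H_g$ is \'etale.

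Riemann--Hurwitz gives $g(\tC)=2g-1$ as a consistency check.

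Both $\s_1$ and $\s_2$ restrict to $y\mapsto -y$ on $\CC(x,y)$, so both are lifts of the hyperelliptic involution of $H_g$.
\begin{itemize}
\item A point of $\tC$ is fixed by $\s_1$ exactly when its inertia group is $\langle\s_1\rangle$. By the above, these are the preimages of the $u_i$.
\item Likewise $\s_2$ fixes exactly the preimages of the $v_j$.
\end{itemize}

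Finally, we identify the line bundle. The \'etale double cover $\tC\to H_g$ is the cover obtained by adjoining $\sqrt{U}$ to $\CC(H_g)$, since $y_1^2=U$. Such a cover is given by the $2$-torsion bundle $L$ with $L^{2}\simeq\cO_H(\mathrm{div}\,U)$, taken up to $\cO_H(\mathrm{div}\,U/2)$ twisting.

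On $H_g$ each $u_i$ is a Weierstrass point, so $\mathrm{div}(x-u_i)=2u_i-D_\infty$, where $D_\infty\in g^1_2$ is the polar divisor of $x$. Therefore
\[
\mathrm{div}\,U=2\Big(\sum_i u_i-(k+1)D_\infty\Big),
\]
and we get $L=\cO_H\big(\sum u_i-(k+1)D_\infty\big)$. Using $D_\infty\sim 2u_j$ for each $j$, this is linearly equivalent to $\cO_H(u_1-u_2+\ldots+u_{2k+1}-u_{2k+2})$.

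The main obstacle is bookkeeping rather than substance. Specifically, one must check that the normalisation introduces no extra ramification at $\infty$, which holds because the degrees of $U$ and $V$ are even, and that the inertia description identifies which lift fixes which points.
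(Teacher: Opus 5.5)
Your argument is correct. The paper gives no proof of its own and only cites Mumford, whose construction is precisely the $\ZZ_2\times\ZZ_2$ cover of $\PP^1$ with function field $\CC(x,\sqrt{U},\sqrt{V})$ that you use; your inertia computation (inertia $\langle\s_1\rangle$ over the $u_i$, $\langle\s_2\rangle$ over the $v_j$, trivial elsewhere) then gives both the \'etaleness of $\tC\to H_g$ and the fixed loci of the two lifts.

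The only thing to tidy is the garbled phrase ``taken up to $\cO_H(\mathrm{div}\,U/2)$ twisting''. What you need is that the anti-invariant summand of the pushforward of $\cO_{\tC}$ to $H_g$ consists of the sections $g\sqrt{U}$ with $\mathrm{div}\,g+\tfrac12\mathrm{div}\,U\ge 0$, so it is $\cO_H(\tfrac12\mathrm{div}\,U)$. Your computation of $\tfrac12\mathrm{div}\,U$ then finishes the proof.
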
 

In the case $k=0$, we have a criterion for the hyperellipticity of the top curve (see also \cite{BO24}).
\begin{cor}
    \label{farkas-lemma}
Let $H$ be a hyperelliptic curve of genus $g$ and $h:C\ra H$ an \'etale double covering defined by $\eta\in JH[2]\setminus \{0\}$.  Then $C$ is hyperelliptic if and only if 
$\eta= \cO_H(w_1-w_2)$, where $w_1, w_2 \in H $ are Weierstrass points.
\end{cor}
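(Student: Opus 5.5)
We apply Proposition \ref{Mumford} with $k=0$. Let $\pi:H\to\PP^1$ be the hyperelliptic map, branched at the images of the Weierstrass points $w_1,\dots,w_{2g+2}$.

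For the implication ``$\Leftarrow$'', suppose $\eta=\cO_H(w_1-w_2)$. Take $u_1,u_2$ to be the images of $w_1,w_2$ and $v_1,\dots,v_{2g}$ the images of the remaining Weierstrass points. Proposition \ref{Mumford} then says that the double cover of $H$ given by $\cO_H(u_1-u_2)=\eta$ is the normalised fibre product of $H$ with $C_0\cong\PP^1$, the conic branched at $u_1,u_2$. In particular $C$ admits a degree $2$ map to $C_0\cong\PP^1$. Since $C$ is an \'etale double cover of a genus $g\ge 2$ curve, $g(C)=2g-1\ge 2$, so $C$ is hyperelliptic.

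For ``$\Rightarrow$'', suppose $C$ is hyperelliptic with involution $j$, and let $\s$ be the deck involution of $h$.

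\begin{enumerate}
\item The hyperelliptic involution is central in $\Aut(C)$, so $j$ commutes with $\s$ and descends to an involution of $H$.
\item This descended involution has quotient $C/\langle \s,j\rangle$, which is dominated by $C/\langle j\rangle=\PP^1$ and so is rational. Hence it is the hyperelliptic involution $\iota$ of $H$, and $j$ is a lift of $\iota$.
\item The group $\langle \s,j\rangle\cong\ZZ_2^2$ acts on $C$ with quotient $\PP^1$. Its three intermediate quotients are $H=C/\s$, $C/j\cong\PP^1$ and $C'=C/(\s j)$.
\item We count fixed points. Since $\s$ is fixed-point free, the $2g+2$ branch points of $\pi$ each have two preimages on $C$, and these are fixed by exactly one of $j$ and $\s j$. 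The involution $j$ has $2g(C)+2=4g$ fixed points. Therefore the branch locus of $C/j\to\PP^1$ consists of exactly two points $u_1,u_2$, and the other $2g$ points $v_i$ form the branch locus of $C'\to\PP^1$.
\item The tower is now exactly Mumford's diagram \eqref{diag:Mum} with $k=0$. This is because the normalised fibre product of $C/j$ and $H$ over $\PP^1$ has degree $4$ over $\PP^1$ and is dominated by $C$, so it equals $C$.
\item Proposition \ref{Mumford} then identifies $h$ with the covering given by $\cO_H(w_1-w_2)$, where $w_i$ are the Weierstrass points over $u_i$. Since $\eta$ is determined by the \'etale double cover $h$, this gives $\eta=\cO_H(w_1-w_2)$.
\end{enumerate}

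The main obstacle is the fixed-point count in step 4, namely showing that exactly two branch points are ``absorbed'' by $j$. If that argument becomes shaky, the fallback is a Riemann--Hurwitz computation for $C\to C/j$, which also pins down the number of branch points as two.
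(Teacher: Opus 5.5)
Your proof is correct and follows the paper's route: the paper presents this statement as the $k=0$ case of Proposition \ref{Mumford} (with a pointer to \cite{BO24}), and your converse argument via centrality of the hyperelliptic involution just makes explicit how a hyperelliptic $C$ lands in Mumford's diagram. One clarification on your closing remark: the map $C/j\to\PP^1$ having exactly two branch points follows at once from Riemann--Hurwitz for a double cover $\PP^1\to\PP^1$, and those two points are where $\s j$ (not $j$) fixes the preimages, while $j$ fixes the preimages of the other $2g$ Weierstrass points.
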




The following proposition is an extended version of an already known result, see \cite[Prop 2.1]{BNOS} and references therein.
\begin{prop}\label{autoofP}
Let $(P, \Xi)$ be an element in the image of the Prym map 
$\cP^{iso}_3$, respectively in $\cP^{ni}_3$ Case I.2.  Then it holds 
$$
B:=\{\psi \in \Aut (P,\Xi) \ : \ \psi^2=id, \ 
\psi_{|K(\Xi)}= \pm id \} = \langle \sigma, \tau, j, -1 \rangle \simeq \ZZ_2^4 
$$
where $j$ is any lift of the hyperelliptic involution from $H$.
\end{prop}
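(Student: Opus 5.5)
The proof has two parts. First I show that $\langle \sigma, \tau, j, -1\rangle\subset B$ and that it is isomorphic to $\ZZ_2^4$. Then I show the reverse inclusion.

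\emph{Step 1: the four generators lie in $B$.} The deck involutions $\s,\tau$ act on $J\tC$ and preserve $\tilde\Theta$. Since $P=P(f)$ is stable under them, they restrict to polarised automorphisms of $(P,\Xi)$. A lift $j$ of the hyperelliptic involution of $H$ to $\tC$ normalises $\langle\s,\tau\rangle$. It exists because $\langle\eta,\xi\rangle$ is fixed by $\iota^*=-1$. Since the group is abelian of exponent two, I expect $j$ to commute with $\s,\tau$ and $j^2\in\langle \s,\tau\rangle$. After composing with $\s$ or $\tau$, I will choose $j$ to be an involution, using Proposition \ref{Mumford} to see the explicit lifts. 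Then $\langle\s,\tau,j,-1\rangle$ is an elementary abelian $2$-group acting on $P$.

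For the order to be $16$, I must check that no nontrivial product acts on $P$ as $\pm1$. This I do on the isotypical pieces $P=h_\s^*P(k_\s)\boxplus h_\tau^*P(k_\tau)\boxplus h_{\s\tau}^*P(k_{\s\tau})$ of Lemma \ref{basicisodec}. On each piece, $\s$ and $\tau$ act by the sign patterns $(1,-1,-1)$, $(-1,1,-1)$ and so on. The element $j$ acts on $h_\alpha^*P(k_\alpha)$ as the lift of the hyperelliptic involution to $C_\alpha$. On a two-dimensional Prym this is $\pm1$ only in special configurations, so I will compute it case by case from the genera of the intermediate Mumford curves.

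Next I check the action on $K(\Xi)$. By Lemma \ref{polonJH} and \cite[Cor. 12.1.5]{BL}, $K(\Xi)\cong K(f^*\tilde\Theta|_{JH})\cong K(4\Theta_H)/\ldots$ is identified with the image of $f^*JH\cap P$. The automorphism $\s$ acts trivially on $f^*JH$, and $j$ acts as $-1$ there. Hence both act as $\pm 1$ on $K(\Xi)$, and so do $\tau$ and $-1$.

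\emph{Step 2: the reverse inclusion.} Let $\psi\in B$. Since $\psi|_{K(\Xi)}=\pm1$, after multiplying by $-1$ I may take $\psi|_{K(\Xi)}=\id$. Then $\psi$ extends, together with $\id$ on the complementary subvariety $f^*JH$, to an automorphism $\Psi$ of $J\tC=(P\times f^*JH)/\Gamma$. The point is that the gluing group $\Gamma\subset K(\Xi)$ is preserved, and the polarisation $\tilde\Theta$ is preserved. So $\Psi$ is an involution of $(J\tC,\tilde\Theta)$ acting trivially on $f^*JH$. For $\tC$ non-hyperelliptic, which holds in cases I.2 and II by Corollary \ref{farkas-lemma} and the structure of the isotropic cases, the Torelli theorem gives $\Psi=\pm\phi^*$ for some involution $\phi\in\Aut(\tC)$. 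The sign must be $+$, since $-\phi^*$ cannot act as the identity on $f^*JH$ unless $\phi^*|_{f^*JH}=-1$; that alternative I handle by multiplying by $j$.

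Then $\phi$ satisfies either $\phi^*=\id$ on $JH$, so that $\phi$ commutes with the covering and descends to the identity on $H$ and hence lies in $\langle\s,\tau\rangle$, or $\phi$ descends to $\iota$ and so lies in $j\langle\s,\tau\rangle$. More precisely, $\phi$ acts on $f^*JH\cong JH/G$, so it normalises the fibres of $f$ up to an automorphism of $H$ inducing $\pm1$ on $JH$. The only such automorphisms are $\id$ and $\iota$, since $H$ is general in its component or by Torelli on $H$.

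\emph{Main obstacle.} I expect the hardest part to be this last descent: showing that an automorphism of $\tC$ inducing $\pm1$ on $f^*JH$ normalises $\langle\s,\tau\rangle$. I will attack it by proving that $\langle\s,\tau\rangle$ is exactly the group of automorphisms acting trivially on $f^*JH$ and acting on the Prym pieces with the prescribed signs. Failing that, I will use that $\tC\to\tC/\langle\phi,\s,\tau\rangle$ is determined through Lemma \ref{key_lemma} by the image of the pullback of $JH$.
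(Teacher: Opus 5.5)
Your proposal follows essentially the same route as the paper (action of $\s,\tau,j,-1$ on $K(\Xi)=P\cap f^*JH$, gluing $(\pm\id,\psi)$ on $f^*JH\times P$ into a polarised involution of $J\tC$, Strong Torelli, descent to $\id_H$ or $\iota$), with one small slip: composing $j$ with $\s$ or $\tau$ does not change $j^2$, so the fact that the lifts are involutions has to come from a lift with a fixed point, which Proposition \ref{Mumford} supplies. The descent step you flag is only asserted in the paper, and your fallback idea closes it: $\phi\s\phi^{-1}$ acts trivially on $f^*JH$, and any group $K\supseteq\langle\s,\tau\rangle$ of automorphisms acting trivially on $f^*JH$ has $g(\tC/K)=\dim(J\tC)^K=3=g(H)$, so Riemann--Hurwitz for $H\to\tC/K$ forces $K=\langle\s,\tau\rangle$.
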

\begin{proof}
Firstly, note that $K(\Xi)=P\cap JH^*$ always contains $4$-torsion points. Since $\s$ and $\tau$ restrict to the identity on $JH^*$, they belong to $B$. Analogously, $j,-1\in B$ because they restrict to $(-1)$ on $JH^*$. Note that $j\neq -1$ because $\tC$ is not hyperelliptic.

    Let $\psi \in B$ with restriction to $K(\Xi)$ equals $\pm id$. Then, there is an automorphism $\Tilde{\psi}: J\tC \to J\tC$ such that the following diagram commutes:


\begin{equation}
        \begin{tikzcd}
0 \arrow[r] & K(\Xi) \arrow[d, equal] \arrow[r] & f^*JH \times P \arrow[d, "{(\pm\id,\psi)}"] \arrow[r, "\mu"] & J\tC \arrow[d, "\Tilde{\psi}"] \arrow[r] & 0 \\
0 \arrow[r] & K(\Xi) \arrow[r]                                & f^*JH \times P \arrow[r, "\mu"]                           & J\tC \arrow[r]                           & 0
\end{tikzcd}
\end{equation}


where $\mu$ is the addition map. Repeating the argument from \cite{BS25}, we get that $\tilde{\psi}$ is a polarised isomorphism. Moreover, by the Strong Torelli Theorem  we have that either $\tilde{\psi}$ or $-\tilde{\psi}$ comes from an automorphism of $\tC$. Note that $-\tilde{\psi}=\widetilde{(-\psi)}$. However, by construction of $\tilde{\psi}$, we have that it restricts to $\pm 1$ on $f^*JH$ which means that (if it comes from an automorphism of $\tC$) it is a lift either of identity on $H$ or hyperelliptic involution on $H$. 
\end{proof}

\begin{cor}\label{Bplus}
    Under the same assumptions as in Proposition \ref{autoofP}, we get that 
    $$
B^+:=\{\psi \in \Aut (P,\Xi) \ : \ \psi^2=id, \ 
\psi_{|K(\Xi)}= id \} = \langle \sigma, \tau, -j \rangle \simeq \ZZ_2^3. 
$$
\end{cor}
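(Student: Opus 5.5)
The plan is to read off $B^+$ as the kernel of a homomorphism out of the group $B$ described in Proposition \ref{autoofP}. By that proposition every $\psi\in B$ satisfies $\psi_{|K(\Xi)}=\pm id$. The restriction therefore defines a map $\rho: B\to\{\pm 1\}$, $\psi\mapsto \psi_{|K(\Xi)}$. Since restriction respects composition, $\rho$ is a group homomorphism, and by definition $B^+=\ker\rho$.

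First we compute $\rho$ on the generators $\sigma,\tau,j,-1$, using $K(\Xi)=P\cap JH^*$ as in the proof of Proposition \ref{autoofP}. The deck transformations $\sigma,\tau$ act trivially on $f^*JH$, so $\rho(\sigma)=\rho(\tau)=1$. The lift $j$ of the hyperelliptic involution acts as $-1$ on $f^*JH$, so $\rho(j)=-1$, and obviously $\rho(-1)=-1$. Consequently $\rho(-j)=1$, and $\rho$ is surjective onto $\{\pm1\}$. This step needs $-1\neq 1$ on $K(\Xi)$, which holds because $K(\Xi)$ contains $4$-torsion points, as already noted in the proof of Proposition \ref{autoofP}.

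Next, $\ker\rho$ is an index-two subgroup of $B\simeq\ZZ_2^4$, hence isomorphic to $\ZZ_2^3$. It contains $\sigma,\tau,-j$. These three elements are independent in $B$: in the basis $\sigma,\tau,j,-1$ their coordinate vectors are $(1,0,0,0)$, $(0,1,0,0)$ and $(0,0,1,1)$. So they generate a subgroup of order $8$, which must equal $\ker\rho=B^+$.

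The argument is essentially formal. The only point needing care is the nontriviality of $-1$ on $K(\Xi)$, i.e. that $K(\Xi)$ is not $2$-torsion. This follows from the polarisation types in Lemma \ref{polonJH}: $\Xi$ has an elementary divisor $4$ in both cases, so $K(\Xi)$ contains elements of order $4$.
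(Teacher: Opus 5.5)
Your proposal is correct and matches the paper's (implicit) argument: the paper states this corollary without a separate proof, reading it off from the proof of Proposition \ref{autoofP}, where $\sigma,\tau$ restrict to the identity on $K(\Xi)=P\cap JH^*$ and $j,-1$ restrict to $-1$. Your restriction homomorphism $B\to\{\pm 1\}$ and the observation that $-1\neq \id$ on $K(\Xi)$ (because $K(\Xi)$ contains $4$-torsion) just make that step explicit.
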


In order to prove irreducibility of considered spaces of Klein coverings, we define the following moduli space.
\begin{defn}
Assume $k_1, k_2,\ldots, k_n$ is a set of non-negative integers with $k_1+\ldots+k_n>3$.
Let $\PP_{k_1,\ldots,k_n}$ be the moduli space of $k_1+\ldots+k_n$ marked points in $\PP^1$  up to an isomorphism that preserves subsets of cardinality $k_1,\ldots, k_n$. 
\end{defn}
\begin{rem}
    This space can be seen as a quotient of the space of $k_n$ marked points of $\PP^1$ (if $k_1=\ldots=k_n=1$ the spaces are equal), hence it is an irreducible variety of dimension $k_1+\ldots+k_n-3$.
\end{rem}
\begin{defn}
    If $k_{i}=\ldots=k_j$, for $i\neq j$ we denote by $\PP'_{k_1,\ldots,k_n}$ the quotient space where we allow to permute the sets with equal number of elements. In particular $\PP'_{1,\ldots,1}$ is the projective space with an unordered $n$-tuple of points
\end{defn}

\bigskip
    
\section{Non-isotropic Klein coverings of type I.2}
Recall that in this case, there are Weierstrass points such that $\eta= w_1-w_2, \ \xi= w_1-w_3+w_4-w_5$ and the subgroup $\langle \eta, \xi \rangle$
is non-isotropic. 

We start by showing the following characterisation of the moduli of genus 3 non-isotropic Klein coverings of type I.2.
\begin{prop}\label{233}
The following data are equivalent.
    \begin{enumerate}
        \item the space $\PP'_{2,3,3}$;
        \item the space of (non-isotropic) Klein coverings of hyperelliptic genus 3 curves of type $I.2$.
    \end{enumerate}
\end{prop}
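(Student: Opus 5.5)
The plan is to build explicit mutually inverse maps between the two sets, using the Weierstrass-point description of $JH[2]$. The key observation is that a Klein subgroup of type I.2 determines a canonical partition of the 8 Weierstrass points into blocks of sizes $2,3,3$.

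First I identify the three nonzero elements of $\langle \eta,\xi\rangle$ in subset notation. Take $\eta=\alpha_{\{1,2\}}$ and $\xi=\alpha_{\{1,3,4,5\}}$. Then
$$\eta+\xi=\alpha_{\{2,3,4,5\}}=\alpha_{\{1,6,7,8\}}.$$
So the group contains exactly one element supported on two Weierstrass points, namely $\{1,2\}$. The two elements supported on four points give the two sets $\{1,3,4,5\}$ and $\{1,6,7,8\}$. Removing the pair $\{1,2\}$ from these yields the triples $T_1=\{3,4,5\}$ and $T_2=\{6,7,8\}$.

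I will check that this partition $\{1,2\}\sqcup T_1\sqcup T_2$ is intrinsic, i.e. independent of the chosen generators and of the complement ambiguity $\alpha_S=\alpha_{I\setminus S}$. The pair is the unique element of size $2$ (equivalently of size $6$). For the triples, note that each of the other two elements is represented by a $4$-set meeting the pair in exactly one point, with the complementary representative also meeting it in one point. In both representatives the part outside the pair is a triple, and those two triples are $T_1$ and $T_2$. Hence the unordered partition of type $(2,3,3)$ is well defined. Since $H\to\PP^1$ is determined by its 8 branch points, the covering gives a point of $\PP'_{2,3,3}$, with the two triples unordered.

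Conversely, given 8 points of $\PP^1$ split as a pair $\{a,b\}$ and two unordered triples $T_1,T_2$, let $H$ be the double cover branched at all 8 points. I set $\eta=w_a-w_b$ and $\xi=w_a-w_{t}+w_{t'}-w_{t''}$ with $T_1=\{t,t',t''\}$, which as a class equals $\alpha_{\{a\}\cup T_1}$. These generate the subgroup
$$\{0,\ \alpha_{\{a,b\}},\ \alpha_{\{a\}\cup T_1},\ \alpha_{\{b\}\cup T_1}\},$$
and $\alpha_{\{b\}\cup T_1}=\alpha_{\{a\}\cup T_2}$. This subgroup is symmetric in $T_1,T_2$, so it depends only on the unordered datum. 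It is non-isotropic, since $e(\{a,b\},\{a\}\cup T_1)=1$, and it is of type I.2. The construction of Diagram \ref{diag:Klein_cov_d} then produces the Klein covering.

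The two constructions are inverse: starting from the partition recovers the same subgroup, and the subgroup recovers the partition by the first step. They are compatible with isomorphisms. An isomorphism of hyperelliptic curves commutes with the hyperelliptic involution, so it descends to a Möbius transformation of $\PP^1$ carrying Weierstrass points to Weierstrass points. Conversely, a Möbius map preserving the partition lifts to $H$. The automorphism group of $H$ modulo the hyperelliptic involution (which acts trivially on $JH[2]$) therefore acts compatibly on both sides.

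The main obstacle is the well-definedness check in the second paragraph, namely that the triples are canonical despite the complement ambiguity. It reduces to the bookkeeping above. As a consistency check on the count, the number of subgroups on a fixed $H$ equals the number of such partitions, $\binom{8}{2}\cdot\frac{1}{2}\binom{6}{3}=28\cdot 10=280$, matching Lemma \ref{4components}. Irreducibility of this component then follows from the irreducibility of $\PP'_{2,3,3}$.
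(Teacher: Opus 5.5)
Your proof is correct and follows the same route as the paper. The pair and the two triples of branch points give the subgroup $\{w_a-w_b,\ w_a-w_t+w_{t'}-w_{t''},\ w_b-w_t+w_{t'}-w_{t''}\}$, and the ambiguity $\alpha_S=\alpha_{I\setminus S}$ is what makes the construction symmetric in the two triples. You are more careful than the paper in two places: you show that the $(2,3,3)$ partition is recovered intrinsically from the subgroup (via its unique element supported on two Weierstrass points), and you check compatibility with isomorphisms; your count $28\cdot 10=280$ matches the degree in Lemma \ref{4components}.
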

\begin{proof}
    Starting from an element $(\{u_1,u_2\},\{u_3,u_4,u_5\}.\{u_6, u_7, u_8\})$, we can construct a genus 3 hyperelliptic curve as a double covering of $\PP^1$ branched in these 8 points. Then preimages of these points become Weierstrass points, denoted by $w_i$. Now we choose the following Klein group, take a point from the pair and all points from a triple or both points from the pair. In coordinates, we get $\{w_1-w_2, w_1-w_3+w_4-w_5, w_2-w_3+w_4-w_5\}$. Note that the group does not depend on the labelling of Weierstrass points and by properties of Weirerstrass points, the group does not depend on whether we choose one or the other triple. 

    A Klein covering of type $I.2$ of hyperelliptic genus 3 curve, is constructed uniquely from the curve and it Klein subgroup of 2-torsion points of the form $\{w_1-w_2, w_1-w_3+w_4-w_5, w_2-w_3+w_4-w_5\}$. As we already noted, there is an ambiguity of points that are written as differences of 4 Weierstrass points, so the covering given by $\{w_1-w_2, w_1-w_6+w_7-w_8, w_2-w_6+w_7-w_8\}$ is isomorphic to the one we started with.
    Hence, looking at their images under hyperelliptic covering to $\PP^1$ we get an element of $\PP'_{2,3,3}$.
\end{proof}

Let $(H, \langle \eta, \xi \rangle)$ be a Klein covering of type $I.2$ and  $\langle \sigma, \tau, j\rangle \simeq$ a group of automorphisms acting on $\tC$ where $j$ is a lift of the hyperelliptic involution. 
\begin{prop}
There is a choice of the lift $j$ such that the number of fixed points in $\tC$ given by each involutions are
\begin{eqnarray*}
|\Fix(j)|=|\Fix(j\s)|=12, \quad |\Fix(j\tau)|=|\Fix(j\sigma\tau)|=4, \\\quad |\Fix(\sigma)|=|\Fix(\tau)|= |\Fix(\sigma\tau)|=0.
\end{eqnarray*}
\end{prop}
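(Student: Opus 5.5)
The plan is to view $\tC$ as a $\ZZ_2^3$-Galois cover of $\PP^1$ and read off all fixed points from the inertia groups over the eight branch points. The main tool is the fibre product description of Proposition \ref{Mumford}, applied twice.

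\textbf{Step 1: the $\ZZ_2^3$ cover.} Let $u_1,\dots,u_8\in\PP^1$ be the branch points of $H\to\PP^1$, labelled as in Proposition \ref{233}: $\eta=w_1-w_2$ and $\xi=w_1-w_3+w_4-w_5$, with triples $\{3,4,5\}$ and $\{6,7,8\}$. Let $G=\langle\s,\tau,j\rangle\simeq\ZZ_2^3$ be the group generated by the deck group and a lift of the hyperelliptic involution. Then $\tC\to\PP^1=\tC/G$ is Galois with group $G$. The function field of $\tC$ is obtained from $\CC(\PP^1)$ by adjoining square roots of three polynomials: $p_{12}$, vanishing at $u_1,u_2$; $p_{1345}$, vanishing at $u_1,u_3,u_4,u_5$; and $p_{\rm all}$, vanishing at all eight points. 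This follows from Proposition \ref{Mumford} applied to $\eta$ and to $\xi$, each time with $H$ as the middle curve; note that $\xi$ and $\eta+\xi$ correspond to the even subsets $\{1,3,4,5\}$ and $\{2,3,4,5\}$.

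The nontrivial characters $\chi\in\widehat G$ then correspond to the seven double covers of $\PP^1$ branched over the sets
\[
\{1,2\},\ \{1,3,4,5\},\ \{2,3,4,5\},\ \{1,\dots,8\},\ \{3,\dots,8\},\ \{2,6,7,8\},\ \{1,6,7,8\}.
\]
The character whose branch set is all eight points has kernel $\langle\s,\tau\rangle$, since that double cover is $H$.

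\textbf{Step 2: inertia and fixed points.} Over each $u_i$ the inertia group is cyclic of order $2$, generated by some $g_i\in G$. An element $g_i$ is characterised by the rule: $\chi(g_i)=-1$ exactly when $u_i$ lies in the branch set of $\chi$. Since $|G|=8$, the fibre over $u_i$ consists of $4$ points, each fixed by $g_i$. Over the remaining points of $\PP^1$ the action is free. Hence for every $1\neq g\in G$ we get
\[
|\Fix(g)|=4\cdot\#\{i : g_i=g\}.
\]
Evaluating on the basis characters with branch sets $\{1,2\}$, $\{1,3,4,5\}$ and $\{1,\dots,8\}$ gives the following values of $g_i$:
\[
g_1=(1,1,1),\qquad g_2=(1,0,1),\qquad g_3=g_4=g_5=(0,1,1),\qquad g_6=g_7=g_8=(0,0,1).
\]
These are four distinct elements, and all of them lie outside $\langle\s,\tau\rangle$, which is the kernel of the third coordinate. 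Therefore $\s,\tau,\s\tau$ have no fixed points, consistent with the covering $\tC\to H$ being étale.

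\textbf{Step 3: naming the lifts.} The four lifts of the hyperelliptic involution form the coset $G\setminus\langle\s,\tau\rangle$. By Step 2 they have $12,12,4,4$ fixed points. I would choose $j:=g_3$ and $\s:=g_3g_6$, so that $j\s=g_6$. Then $\tau$ is any remaining element of $\langle\s,\tau\rangle$, and $j\tau$, $j\s\tau$ are $g_1$ and $g_2$ in some order. This gives exactly the stated counts.

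As a sanity check, $g(\tC)=9$ and Riemann--Hurwitz for the degree $8$ map to $\PP^1$ gives $16=8\cdot(-2)+8\cdot 4$, where $8\cdot 4=32$ counts the ramification over the eight branch points.

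\textbf{Main obstacle.} The delicate point is Step 1: justifying that $\tC$ is precisely the compositum of these three square roots, with no extra branching, and that the inertia characterisation of $g_i$ holds. I would argue via Proposition \ref{Mumford} for each of $\eta$ and $\xi$ separately, and then note that $\tC$ is the normalised fibre product of $C_\eta$ and $C_\xi$ over $H$, as in the bottom-up perspective of diagram \eqref{diag:Klein_cov_d}. Multiplicativity of square roots then identifies all seven quadratic subfields.
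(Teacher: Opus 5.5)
Your proof is correct, but it takes a different route from the paper. The paper never passes to $\PP^1$. It works in the tower $\tC\to C_\eta\to H$: since $C_\eta$ is hyperelliptic of genus 5 and $k_\eta^*\xi$ is a difference of six Weierstrass points of $C_\eta$, Proposition \ref{Mumford} applied on $C_\eta$ gives the two lifts $j,j\s$ of its hyperelliptic involution 12 fixed points each. Then $\tC\to C_\xi\to H$ gives $|\Fix(j)|+|\Fix(j\tau)|=2\cdot 8=16$. Finally, the global total of 32 forces $|\Fix(j\s\tau)|=4$.

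You replace this chain of local counts by one global computation: the $\ZZ_2^3$ Kummer cover of $\PP^1$, inertia generators $g_i$ determined by characters, and $|\Fix(g)|=4\,\#\{i: g_i=g\}$. This is more mechanical but yields more. It locates the fixed points: $j$ fixes the fibres over $u_3,u_4,u_5$, $j\s$ those over $u_6,u_7,u_8$, and $j\tau$, $j\s\tau$ those over $u_1$, $u_2$. It gives the genus of every quotient $\tC/K$ at once, as in the following Corollary and Diagram \eqref{diag:tower_curves_I.2}. It also applies verbatim to cases II.1, II.2 and I.1. The paper's version is shorter, but it relies on the description of $k_\eta^*\xi$ on $C_\eta$, which it only asserts. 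The obstacle you flag in Step 1 is harmless: $\tC\to H$ is \'etale, so $\tC=C_\eta\times_H C_\xi$ is already smooth and its function field is the compositum.

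One point in Step 3 needs to be made explicit. The element $\s$ is not yours to choose: the standing convention is $C_\s=C_\eta$, so $\s$ is the element fixing $\sqrt{p_{12}}$ and $\sqrt{p_{\rm all}}$, namely $(0,1,0)$. Your $g_3g_6=(0,1,0)$ coincides with it, so the conclusion stands, but the check is needed. The two 12-point lifts $g_3,g_6$ differ exactly by $(0,1,0)$, so if $\s$ were the deck involution over $C_\xi$ or $C_{\eta+\xi}$, no lift $j$ would satisfy $|\Fix(j)|=|\Fix(j\s)|=12$. In the paper's argument this compatibility is automatic, because $j$ and $j\s$ arise as the lifts of the hyperelliptic involution of $C_\eta=C_\s$.
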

\begin{proof}
Recall that the total number of fixed points on $\tC$ of lifts of the hyperelliptic involution equals 32.
In order to find the number of fixed points, note that $C\eta$ is hyperelliptic, hence we can choose a lift of hyperelliptic involution to $C_\eta$ in a way it is hyperelliptic (with 12 fixed points). Moreover a map $\tC\to C_\eta$ is given by the pullback of $w_1-w_3+w_4-w_5$ that can be written as difference of 6 Weierstrass points on $C_\eta$. Hence, by Proposition \ref{Mumford}, the lifts, denoted by $j$ and $j\s$ both have 12 fixed points.
Now, looking at $\tC\to C_\xi\to H$ we get that $|\Fix(j)|+|\Fix(j\tau)|=16$, hence $|\Fix(j\tau)|=4$ and finally,  $|\Fix(j\sigma\tau)|=4$.
\end{proof}
\begin{cor}
The genera of the quotient curves $C_{\alpha}=\tC/\langle \alpha\rangle $ for $\alpha \in \Aut(\tC)$ are
\begin{eqnarray*}
g(C_j)=g(C_{j\s})=2, \quad  g(C_{j\tau})= g(C_{j\sigma\tau})=4,\\
 g(\tC/\langle \s,j\tau \rangle )  =2, \quad  
g(\tC/\langle j, \tau \rangle ) = 1
\end{eqnarray*}
\end{cor}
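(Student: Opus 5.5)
The plan is to read off every genus from the fixed-point counts in the preceding proposition, using Riemann--Hurwitz for the cyclic quotients and Accola's formula for the two Klein quotients.

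\emph{Genus of $\tC$.} Since $f:\tC\to H$ is \'etale of degree $4$ and $g(H)=3$, Riemann--Hurwitz gives $2g(\tC)-2=4\cdot 4=16$. Hence $g(\tC)=9$.

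\emph{Cyclic quotients.} For an involution $\alpha$ with $r$ fixed points, Riemann--Hurwitz for $\tC\to C_\alpha$ reads
$$16=2\bigl(2g(C_\alpha)-2\bigr)+r, \qquad\text{so}\qquad g(C_\alpha)=\frac{20-r}{4}.$$
Substituting the counts from the preceding proposition:
\begin{itemize}
\item $r=12$ for $j$ and $j\s$, giving $g(C_j)=g(C_{j\s})=2$;
\item $r=4$ for $j\tau$ and $j\s\tau$, giving $g(C_{j\tau})=g(C_{j\s\tau})=4$;
\item $r=0$ for $\s,\tau,\s\tau$, giving genus $5$. This agrees with $C_\eta,C_\xi,C_{\eta+\xi}$ being \'etale double covers of a genus $3$ curve, which is a useful consistency check.
\end{itemize}

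\emph{Klein quotients.} I will use Accola's relation for a group $G\simeq\ZZ_2^2=\{1,a,b,ab\}$ acting on a curve $C$:
$$g(C)+2\,g(C/G)=g(C/a)+g(C/b)+g(C/ab).$$
It checks correctly on $\langle\s,\tau\rangle$, since $9+2\cdot 3=5+5+5$.
\begin{itemize}
\item For $\langle \s, j\tau\rangle$ the involutions are $\s$, $j\tau$ and $j\s\tau$. This gives $9+2g=5+4+4$, so $g=2$.
\item For $\langle j,\tau\rangle$ the involutions are $j$, $\tau$ and $j\tau$. This gives $9+2g=2+5+4$, so $g=1$.
\end{itemize}

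\emph{Cross-check and main obstacle.} Alternatively, one can pass through an intermediate quotient. For example, $\tau$ descends to $C_j$ (genus $2$) as an involution, and the quotient $C_j\to\tC/\langle j,\tau\rangle$ should have $2$ branch points, which again yields genus $1$. The only real subtlety is ensuring that the groups $\langle\s,j\tau\rangle$ and $\langle j,\tau\rangle$ are Klein subgroups of $\langle\s,\tau,j\rangle$. This holds because all these automorphisms commute, as $j$ is a lift of the central hyperelliptic involution and the group is $\ZZ_2^3$. Given that, Accola's formula applies directly and the computation is routine.
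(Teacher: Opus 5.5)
Your proposal is correct and is essentially the argument the paper leaves implicit. The paper states the corollary without proof, as a direct consequence of the preceding fixed-point counts via Riemann--Hurwitz. Your use of Accola's formula for $\langle\s,j\tau\rangle$ and $\langle j,\tau\rangle$ is an equivalent shortcut for the Riemann--Hurwitz computation on an intermediate quotient, which you also carry out as a cross-check.

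One minor imprecision: $j$ commutes with $\s$ and $\tau$ because the hyperelliptic involution acts trivially on $JH[2]$, so conjugation by $j$ preserves each intermediate double cover. Centrality of the hyperelliptic involution in $\Aut(H)$ is not the reason. In any case, the $\ZZ_2^3$ structure is part of the given setup.
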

In the commutative Diagram \eqref{diag:tower_curves_I.2} the genera
of the curves from the top are 
9,5,4,2 and 1. 
\begin{equation}\label{diag:tower_curves_I.2}
\xymatrix@R=.7cm@C=.4cm{
&&& \tC \ar[d] \ar[ddll] \ar[ddrr] \ar[dddll] \ar[dddrr]&&& \\
&&& C_{\eta}\ar[dd] &&& \\
& C_{j\tau} \ar[ddl] \ar[ddr] \ar[drr]&&&& C_{j\sigma\tau} \ar[ddl] \ar[ddr]\ar[dll] &\\
&C_j \ar[drrr]\ar[dl]&&  H' && C_{j\s} \ar[dr] \ar[dlll] &\\
E && F' && E' && F
}
\end{equation}


\begin{prop} \label{isotyp_I.2PB}
The isotypical decomposition of $J\tC$ with respect to $\left<\s,\tau,j\right>$
is given by 
$$
  J\tC  =   JH^* \boxplus JH' \boxplus E \boxplus F \boxplus E' \boxplus F'
$$ 
where  $H':= \tC/\langle \s,j\tau \rangle$, $E: = \tC/ \langle j, \tau \rangle $, 
$E':= \tC/\langle j, \sigma\tau \rangle$,
$F:= \tC/\langle j\s, \tau \rangle$ and
$F':= \tC/\langle j\s, \sigma\tau \rangle$. In particular, the isotypical decomposition of $P$ with respect to $B$ is 
$$
 P= JH' \boxplus E \boxplus F \boxplus E' \boxplus F'.
$$ 

Moreover, we have the isogenies 
$$
JC_j \sim E \times E', \quad JC_{j\s} \sim F \times F', \quad JC_{j\tau} \sim JH' \times  E\times F',
\quad JC_{j\sigma\tau} \sim JH' \times  E'\times F.
$$
\end{prop}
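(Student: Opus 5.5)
The plan is to use the standard isotypical decomposition for a group of automorphisms, here $G=\langle \s,\tau,j\rangle\simeq\ZZ_2^3$. For an abelian group all irreducible rational representations are characters $\chi:G\to\{\pm1\}$, so
$$J\tC=\bigoplus_\chi A_\chi, \qquad A_\chi=\Ima\Bigl(\tfrac18\sum_{g\in G}\chi(g)g\Bigr).$$
Each $A_\chi$ with $\chi\neq 1$ is the Prym-type piece cut out by $\ker\chi$, an index-2 subgroup.

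\textbf{Step 1: the pieces and their dimensions.} The trivial character gives $J(\tC/G)=J\PP^1=0$. For a nontrivial $\chi$ with $K=\ker\chi$ of order 4, the piece $A_\chi$ equals $J(\tC/K)$ minus the Jacobians of the quotients by order-8 groups containing $K$. Since $\tC/G=\PP^1$, this gives $A_\chi=J(\tC/K)^*$. The seven index-2 subgroups containing or avoiding $j$ are:
\begin{itemize}
\item $\langle\s,\tau\rangle$, with quotient $H$;
\item $\langle\s,j\tau\rangle$, with quotient $H'$;
\item $\langle j,\tau\rangle$, with quotient $E$;
\item $\langle j,\s\tau\rangle$, with quotient $E'$;
\item $\langle j\s,\tau\rangle$, with quotient $F$;
\item $\langle j\s,\s\tau\rangle$, with quotient $F'$;
\item $\langle j,\s\rangle$, whose quotient I expect to be rational.
\end{itemize}
I would compute the genera with Riemann--Hurwitz from the fixed-point counts of the previous proposition. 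For example, for $\langle j,\tau\rangle$ we get $2g-2=\frac14(16-12-4)$, which gives genus 1. For $\langle j,\s\rangle$ we get $\frac14(16-24)=-2$, which gives genus 0. Checking the total, $3+2+1+1+1+1=9=g(\tC)$, so the decomposition is exhausted. The summand $JH$ appears with a star because $f^*$ is not injective.

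\textbf{Step 2: the Prym $P$.} The Prym $P$ is the complement of $f^*JH$, i.e. the sum of the other pieces. The group $B$ of Proposition \ref{autoofP} is generated by $G$ and $-1$, and $-1$ acts as $-1$ on everything. So the isotypical decomposition of $P$ with respect to $B$ coincides with the one with respect to $G$ restricted to $P$, giving $P=JH'\boxplus E\boxplus F\boxplus E'\boxplus F'$.

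\textbf{Step 3: the isogenies.} These follow by applying the same decomposition to the quotient curves $C_\alpha=\tC/\langle\alpha\rangle$. The Jacobian $JC_\alpha$ is isogenous to the sum of the $A_\chi$ with $\chi(\alpha)=1$. For $\alpha=j$, the index-2 subgroups containing $j$ are $\langle j,\tau\rangle$, $\langle j,\s\tau\rangle$ and $\langle j,\s\rangle$, which give $JC_j\sim E\times E'$ with dimension $2$, as required. The subgroups containing $j\tau$ are $\langle\s,j\tau\rangle$, $\langle j,\tau\rangle$ and $\langle j\s,\s\tau\rangle$ (note $j\s\cdot\s\tau=j\tau$). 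This gives $JH'\times E\times F'$, of dimension $4$. The cases $j\s$ and $j\s\tau$ are analogous.

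\textbf{Main obstacle.} The only delicate point is bookkeeping: correctly identifying which index-2 subgroups contain which elements, and getting the fixed-point counts of products such as $j\s\tau$ into Riemann--Hurwitz. I would verify the decomposition by the dimension check in Step 1 and also against the genera already listed in the previous corollary.
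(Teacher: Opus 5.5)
Your argument is correct and is essentially the paper's approach. Both split $J\tC$ along the seven nontrivial characters of $\langle\s,\tau,j\rangle\simeq\ZZ_2^3$, identify each isotypical piece with $J(\tC/\ker\chi)^*$ (the case $\ker\chi=\langle j,\s\rangle$ gives $\PP^1$, hence $0$), and obtain $JC_\alpha$ as the sum of the pieces with $\chi(\alpha)=1$. Your explicit Riemann--Hurwitz bookkeeping is more transparent than the paper's sketch, which refines Lemma \ref{basicisodec} by the $\pm1$-eigenspaces of $j$.

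The one point you leave unproved is the embedding claim. In the paper's star convention, writing $JH',E,E',F,F'$ without a star asserts that the corresponding pullbacks are embeddings. The paper gets this from Corollary \ref{coremb}, and with your genera it is immediate:
\begin{itemize}
\item the intermediate double covers of $\tC\to H'$ are $C_\s,C_{j\tau},C_{j\s\tau}$, of genera $5,4,4$, none equal to $3$;
\item those of $\tC\to E$ are $C_j,C_\tau,C_{j\tau}$, of genera $2,5,4$, none equal to $1$;
\item the same check works for $E',F,F'$.
\end{itemize}
So none of these intermediate covers is \'etale, and all five pullbacks are embeddings.
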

\begin{proof}
 The first part consists of checking all representations of $\ZZ_2^3$. In particular, it is enough to use decomposition from Lemma \ref{basicisodec} and check how $j$ acts on components.
 For example taking a representation $r$ for which $r(\s)=1, \ r(\tau)=1, \ r(j)=-1$ gives $JH^*$ (as $j$ is hyperelliptic on $H$). Analogously, $r$ acting as $r(\s)=1, \  r(\tau)=-1, \  r(j)=-1$ gives $JH'$ whereas if 
 $r$ acts as $r(\s)=-1, \ r(\tau)=1, \ r(j)=1$ one obtains a complementary abelian subvariety to $JH'$ in $P(C_\tau/H)$ that is $0$, so it does not appear in the decomposition. By looking at the genera of the quotient curves of $\tC$ by an
 involution, one sees that all the  components of the isotypical decomposition actually embed into $J\tC$ (see Corolloary \ref{coremb}). The isogenies follows from checking which components are quotients. For example 
 $E=C_j/\langle \s \rangle $, $E'=C_j/\langle\s\tau \rangle$, so $JC_j\cong E\times E'$. Note here, that $C_j/\langle\s \rangle=C_j/\langle j\s\rangle=\PP^1$ because $j\s$ has at least 6 fixed points on $C_j$ hence it is a hyperelliptic involution on $C_j$.
 \end{proof}

Recall from Proposition \ref{autoofP}, that  $B\simeq \ZZ_2^4 $ is determined by the polarised 
automorphisms of $P$ that  fix (or act as -1) on the kernel of the polarisation $\Xi$. The group $B$ contains 8 artificial and 8 genuine involutions. Observe that the artificial 
involution $-1$ is  distinguished  since it is the only 
involution having a zero-dimensional fixed locus. 

Note that $JH' \subset J\tC$ with restricted polarisation of type $(4,4)$, by means of Torelli theorem one can recover the genus 2 curve $H'$ from the isotypical decomposition of $P$. On the other hand, the Jacobians $JC_j$
and $JC_{j\s}$ are embedded in $J\tC$ with polarisation type $(2,2)$. According to Poincaré Reducibility Theorem  there exists an isogeny 
$$
\psi: JC_j \times (JC_j)^c \lra P
$$
as polarised abelian varieties, where  $(JC_j)^c$ denotes the complementary subvariety to $(JC_j)$ inside 
$P$. Since $\Ker(\psi^*(\Xi)) \supset \ZZ_4^2$ the polarisation type of  $(JC_j)^c$ is of the form $(*,*,*, 4a)$ with $a\in \ZZ_+$. 
Notice that $JC_j = \Fix(j)^0$ and $j$
acts on $(JC_j)^c$ as $-1$, equivalently $(JC_j)^c = \Fix(-j)^0 $ and
$-j$ is an artificial involution.
Similarly, the Jacobians $JC_{j\tau}$  and $JC_{j\sigma\tau}$ 
have polarisation type $(2,2,2,2)$. The above argument shows that $(JC_{j\tau})^c$ and $(JC_{j\sigma\tau})^c$  have polarisation type of the form $(*, 4b)$ with $b\in \ZZ_+$, 
and  are contained in the fix loci of artificial involutions ($-j\tau$ and $-j\s\tau$) of  $J\tC$. On the other hand,  for  $\alpha \in \{ \sigma,\tau, \sigma\tau \}\subset B$, $\alpha $ is genuine involution and 
$\Fix(\alpha) =  JC_{\alpha}^*
\cap P=P(C_\alpha/H)$ is an abelian subvariety with restricted polarisation of type 
$(4,4)$. Moreover, $\Fix(-\alpha)^0=P(\tC/C_\alpha)\subseteq P$ is of type $(2,2,2,2)$.
In this way, we can compute the restricted polarisation of all subvarieties of $P$ of dimensions 1, 2 and 4.


\begin{lem}\label{fourfolds}
    Using the notation from Diagram \ref{diag:tower_curves_I.2}, the abelian fourfolds $$\Fix(-j),\ \Fix(-j\s),\ \Fix(-\tau),\ \Fix(-\s\tau)$$ are not Jacobians of genus 4 curves with restricted polarisation of type $(2,2,2,2)$.
\end{lem}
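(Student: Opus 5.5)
The plan is to first identify each fourfold with a product-type subvariety using the isotypical decomposition of Proposition \ref{isotyp_I.2PB}, and then show that this decomposition forces a reducible principal polarisation on any putative genus 4 curve. Since $P= JH' \boxplus E \boxplus F \boxplus E' \boxplus F'$, each fixed locus of an involution in $B$ is a sum of some of these isotypical components. Reading off the characters:
\begin{itemize}
\item $\Fix(-j)^0$ is the complement of $JC_j\sim E\times E'$ in $P$, namely $JH'\boxplus F\boxplus F'$;
\item $\Fix(-j\s)^0=JH'\boxplus E\boxplus E'$;
\item $\Fix(-\tau)^0=P(\tC/C_\tau)$ is the sum of those components on which $\tau$ acts as $-1$;
\item $\Fix(-\s\tau)^0$ is the analogous sum for $\s\tau$.
\end{itemize}
In every case the fourfold $A$ contains $JH'$ or a pair of elliptic curves as isotypical pieces, with the restricted polarisations computed above: type $(4,4)$ on $JH'$, and type $(4)$ or $(2)$ on the elliptic curves.

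Next, suppose $A\cong JX$ for a genus 4 curve $X$, with $\Xi|_A=2\Theta_X$ (type $(2,2,2,2)$). The involutions of $B$ preserving $A$ then restrict to polarised automorphisms of $(JX,\Theta_X)$. This holds because $\Xi|_A$ is twice a principal polarisation and the automorphisms preserve it, hence preserve $\Theta_X$ up to translation. By the Strong Torelli theorem they come, up to sign, from automorphisms of $X$. Thus $X$ carries a $\ZZ_2^2$ (or larger) action whose isotypical decomposition of $JX$ reproduces the decomposition of $A$. For example, for $A=JH'\boxplus F\boxplus F'$, the pieces $F$, $F'$, $JH'$ would be the Jacobians, or Pryms, of the quotients of $X$. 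The restriction of $\Theta_X$ to each isotypical piece must be half of the restriction of $\Xi$, which forces type $(2,2)$ on $JH'$ and type $(2)$ or $(1)$ on the elliptic curves.

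Now compare with Riemann--Hurwitz for a $\ZZ_2^2$-action on a genus 4 curve, together with the formula that the pullback of a principal polarisation along a degree $d$ map is $d$ times principal. A genus 2 quotient $X\to H'$ would have to be a double cover whose pullback restricts to type $(2,2)$, so it is ramified, with $4=2\cdot 2-1+r/2$, giving $r=2$. The complementary piece $F\times F'$ must then be the Prym, of type $(1,2)$ in $JX$. This contradicts the type $(2)$ on each elliptic curve, which forces $F\times F'$ to carry $(2,2)$ unless the elliptic curves meet in a $\ZZ_2$. I would settle this by computing the intersection $F\cap F'$ inside $J\tC$ through $JC_{j\s}\sim F\times F'$, where $JC_{j\s}$ is a genus 2 Jacobian with $F\cap F'=F[2]$-type intersection. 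If that comparison is inconclusive, the alternative is to use the kernel of the polarisation: $K(\Xi|_A)$ together with the action of $B$ on it must be compatible with $K(2\Theta_X)=JX[2]$, on which the involutions act symplectically.

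The main obstacle is the $\Fix(-\tau)$ and $\Fix(-\s\tau)$ cases, where the identification of components and their pairwise intersections needs careful bookkeeping through Diagram \ref{diag:tower_curves_I.2}. I would first check which components those fixed loci contain. Then I would verify that the component intersections produce a kernel, such as containing $\ZZ_4$-elements coming from $K(\Xi)\supset\ZZ_4^2$, that cannot equal $JX[2]$. This would give the contradiction: an element of order 4 in $K(\Xi|_A)$ is incompatible with type $(2,2,2,2)$.
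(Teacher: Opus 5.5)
There is a genuine gap, and it sits in the two cases you flag as the main obstacle. Since $\tau$ acts trivially on $JH^*$, you have $\Fix(-\tau)^0=\Ima(1-\tau)=P(\tC/C_\tau)=JH'\boxplus E'\boxplus F'$. This is the Prym variety of the \emph{\'etale} double cover $\tC\to C_\tau$ of a genus $5$ curve, and likewise $\Fix(-\s\tau)^0=P(\tC/C_{\s\tau})=JH'\boxplus E\boxplus F$. Hence $\Xi$ restricts to these fourfolds as exactly twice a principal polarisation. Their type \emph{is} $(2,2,2,2)$ and $K(\Xi|_A)=A[2]$, so the contradiction you hope for in your last paragraph (an element of order $4$ in $K(\Xi|_A)$) cannot occur.

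The Torelli/Riemann--Hurwitz bookkeeping does not help either. On $A=JH'\boxplus E'\boxplus F'$ the involutions $\s$, $j$, $\s j$ have fixed loci $JH'$, $E'$, $F'$. A genus $4$ curve with a $\ZZ_2^2$-action whose involutions have $2,6,6$ fixed points (quotients of genera $2,1,1$) gives $\Theta_X$-types $(2,2),(2),(2)$ on the pieces, exactly half of $(4,4),(4),(4)$. No invariant of this kind can exclude these two cases; a genuine Schottky-type input is required. The paper supplies it as follows. Since $C_\tau/\langle j\rangle=E$ and $C_{\s\tau}/\langle j\rangle=E'$, both genus $5$ curves are bielliptic, hence not trigonal (Castelnuovo--Severi). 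Shokurov's theorem then shows that the Prym variety of an \'etale double cover of such a curve is not a Jacobian.

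For $\Fix(-j)$ and $\Fix(-j\s)$, your main line rests on a miscalculation. The Prym of a double cover $X\to H'$ of a genus $2$ curve branched in $2$ points is complementary in $(JX,\Theta_X)$ to $\pi^*JH'$, which has type $(2,2)$. So the Prym has type $(2,2)$, not $(1,2)$, and no contradiction arises. Also, all four elliptic curves have type $(4)$, not $(2)$, since the degree $4$ quotient maps from $\tC$ induce embeddings.

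Your fallback does work for these two cases, and it is exactly the paper's argument. $\Fix(-j)^0$ is the complement in $P$ of $JC_j$, whose restricted type is $(2,2)$. We have $K(\Xi)\cong\ZZ_4^4$ while $K(\Xi|_{JC_j})=JC_j[2]$. The addition map $JC_j\times\Fix(-j)^0\to P$ therefore forces elements of order $4$ in $K(\Xi|_{\Fix(-j)^0})$, so its type is $(*,*,*,4a)$. The same argument applies to $\Fix(-j\s)$.
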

\begin{proof}
By what we have written above the lemma, we get that $\Fix(-j)$ and $\Fix(-j\s)$ do not have a $(2,2,2,2)$ polarisation type.
As for the other two, observe that 
$C_{\tau}$ and $C_{\s\tau}$ are bielliptic curve since the quotient by $j$ of any of them
is an elliptic curve. It is known that bielliptic curves of genus 5 are not trigonal (\cite{T}). Then, according to \cite[Main theorem]{Sho}, the Prym variety of a bielliptic genus 5 curve is not a Jacobian. This shows that $\Fix(-\tau),\ \Fix(-\s\tau)$ are not Jacobians.
\end{proof}

\begin{thm}\label{thmI.2}
The  Prym map 
$\cP^{ni}_3 :  \cRH_3^{ni} \ra  \cA_6^{(1,1,1,1,4,4)}$ is injective on the irreducible component consisting of Klein coverings of type I.2.

\end{thm}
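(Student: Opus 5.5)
The plan is to show that a polarised Prym variety $(P,\Xi)$ in the image determines, canonically and up to isomorphism, the curve $\tC$ together with the Klein group $\left<\s,\tau\right>$; this recovers $f:\tC\to H$. By Proposition \ref{autoofP}, $(P,\Xi)$ determines the group $B=\left<\s,\tau,j,-1\right>\simeq\ZZ_2^4$. Within $B$, the involution $-1$ is singled out as the only one with zero-dimensional fixed locus. For every other involution $\psi\in B$ we record the dimension of $\Fix(\psi)^0$ and the type of the restricted polarisation, as computed in the discussion before Lemma \ref{fourfolds}:
\begin{itemize}
\item $\Fix(\alpha)^0$ has dimension $2$ and type $(4,4)$, and $\Fix(-\alpha)^0$ has type $(2,2,2,2)$, for $\alpha\in\{\s,\tau,\s\tau\}$;
\item $\Fix(j)^0=JC_j$ and $\Fix(j\s)^0=JC_{j\s}$ have type $(2,2)$;
\item $\Fix(j\tau)^0=JC_{j\tau}$ and $\Fix(j\s\tau)^0=JC_{j\s\tau}$ have type $(2,2,2,2)$;
\item the complements $\Fix(-j),\Fix(-j\s)$ are fourfolds not of type $(2,2,2,2)$, while $\Fix(-j\tau),\Fix(-j\s\tau)$ are surfaces of type $(*,4b)$.
\end{itemize}

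The fourfolds of type $(2,2,2,2)$ are therefore the five fixed loci $\Fix(-\s),\Fix(-\tau),\Fix(-\s\tau),\Fix(j\tau),\Fix(j\s\tau)$. By Lemma \ref{fourfolds}, $\Fix(-\tau)$ and $\Fix(-\s\tau)$ are not Jacobians. For $\Fix(-\s)=P(\tC/C_\s)$ I would use Proposition \ref{Mumford}: $C_\s=C_\eta$ is hyperelliptic of genus $5$, and $\tC\to C_\eta$ is given by a difference of $6$ Weierstrass points. Hence this Prym is isogenous to a product of two genus $2$ Jacobians with a product-type polarisation, so it is not the Jacobian of a genus $4$ curve. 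This argument is the one I expect to need care, because one has to check that the polarisation really splits. It follows that $(P,\Xi)$ determines the unordered pair $\{j\tau,j\s\tau\}\subset B$, and hence also $\s=(j\tau)(j\s\tau)$. By the Torelli theorem the polarised fourfolds $\Fix(j\tau)^0$ and $\Fix(j\s\tau)^0$ (with half the restricted polarisation) give back the genus $4$ curves $C_{j\tau}$ and $C_{j\s\tau}$. Moreover, $JH'$ is recovered as $\Fix(j\tau)^0\cap\Fix(j\s\tau)^0$, equivalently as the $(4,4)$-polarised surface $\Fix(-\s)^0\cap\Fix(j\tau)^0$ fixed by the relevant subgroup of $B$.

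Next I would recover the double covers $C_{j\tau}\to H'$ and $C_{j\s\tau}\to H'$. Neither is étale (genus $4$ over genus $2$), so Lemma \ref{key_lemma} shows that each is determined by the embedding $JH'\subset JC_{j\tau}$, respectively $JH'\subset JC_{j\s\tau}$, whose image we have identified inside $P$. Since $\tC/\left<j\tau,j\s\tau\right>=H'$, the curve $\tC$ is the normalisation of the fibre product $C_{j\tau}\times_{H'}C_{j\s\tau}$. The genus count $4+4+5-2\cdot 2=9$ is consistent with this, since the third intermediate quotient is $C_\s$ of genus $5$. Thus $\tC$ is recovered, together with its Klein group $\left<j\tau,j\s\tau\right>$, whose product is $\s$.

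It remains to recover $\tau$, and this is the step I expect to be the main obstacle, since the fibre-product construction only sees $\left<j\tau,j\s\tau\right>$. My approach is as follows. The construction realises $P\subset J\tC$ compatibly with $B$. The genuine involutions of $B$ are the cosets $\pm\psi$ that extend, as in the proof of Proposition \ref{autoofP}, to polarised automorphisms of $J\tC$; by Strong Torelli they come from automorphisms of $\tC$, since $\tC$ is not hyperelliptic. Among these genuine involutions, the fixed-point-free ones are exactly those whose quotient has genus $5$, that is, whose $(-1)$-part in $P$ is a fourfold of type $(2,2,2,2)$; these are $\s,\tau,\s\tau$. So $\left<\s,\tau\right>$ is intrinsic, and $H=\tC/\left<\s,\tau\right>$ together with the covering $f$ is recovered. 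If two Prym varieties coincide, all these constructions are canonical up to isomorphism, so the corresponding coverings are isomorphic, which proves injectivity.
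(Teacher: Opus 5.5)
\textbf{The reconstruction of $\tC$ is correct, but the final recovery of $\langle\s,\tau\rangle$ has a genuine gap.}

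\textbf{Reconstruction of $\tC$.} This part is sound and close to the paper's argument, with one variation. You single out $\{j\tau,j\s\tau\}$ by excluding all five fourfolds of type $(2,2,2,2)$. That forces the extra Mumford argument for $\Fix(-\s)^0=P(\tC/C_\s)$. The argument is correct, and the splitting you worried about is automatic. On $\Fix(-\s)^0$ the subvarieties $JC_j$ and $JC_{j\s}$ are the $\pm1$-parts of $j$, hence complementary. Both are of type $(2,2)$ and their sum is of type $(2,2,2,2)$. Comparing Euler characteristics, $16=\deg\mu\cdot 16$, so the addition map $JC_j\times JC_{j\s}\to\Fix(-\s)^0$ is an isomorphism and the polarisation is a product.

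The paper avoids this step. It first isolates $\s\in B^+$ as the unique involution whose fixed locus is a $2$-dimensional isotypical component, namely $JH'$. It then only considers fourfolds containing $JH'$, and $\Fix(-\s)$ is not among them.

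There is one small slip. The identity component of $\Fix(-\s)^0\cap\Fix(j\tau)^0$ is $E\boxplus F'$, not $JH'$. You mean $\Fix(\s)^0$, which inside $P$ is already $JH'$. For $\Fix(j\tau)^0\cap\Fix(j\s\tau)^0$ you should also take the component of $0$.

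\textbf{The gap: recovering $\langle\s,\tau\rangle$.} Your whole final step rests on the sentence ``the construction realises $P\subset J\tC$ compatibly with $B$'', and this is not justified.
\begin{itemize}
\item Torelli and Lemma \ref{key_lemma} identify $JC_{j\tau}$, $JC_{j\s\tau}$ and $JH'$ only up to signed automorphisms.
\item Extending $\psi\in B$ to $J\tC$ as in Proposition \ref{autoofP} requires the embedding $P\subset J\tC$ and its complement $f^*JH$. That is exactly the covering data you are trying to recover.
\end{itemize}
As written, you never show that the involutions $\s,\tau\in B$ correspond to automorphisms of the \emph{reconstructed} curve.

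\textbf{How to fix it.} The fix needs no Strong Torelli, and it is the paper's argument.
\begin{itemize}
\item Since $H'/\tau=\tC/\langle\s,\tau,j\rangle=\PP^1$, the involution $\tau$ descends to the hyperelliptic involution $\iota$ of $H'$, which is intrinsic to $H'$.
\item The lifts of $\iota$ to the Galois cover $\tC\to H'$ with group $\langle\s,j\tau\rangle$ form exactly one coset of that group, namely $\{\tau,\s\tau,j,j\s\}$.
\item Hence $\langle\s,\tau,j\rangle$ is recovered from $\tC\to H'$ alone.
\item Then $\langle\s,\tau\rangle\setminus\{1\}$ is its set of fixed-point-free involutions, since $j,j\s$ have $12$ fixed points and $j\tau,j\s\tau$ have $4$.
\end{itemize}
For injectivity, take an isomorphism between two reconstructed curves lying over an isomorphism of the two curves $H'$. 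It commutes with the hyperelliptic involutions, so it carries one coset of lifts to the other, and therefore one Klein group to the other.
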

\begin{proof}

Let $(P, \Xi)$ be  the image of a Klein covering of type $I.2$ under $\cP^{ni}_3$. First note that, by Proposition \ref{autoofP}, $(P, \Xi)$ determines the group of involutions $B\simeq \ZZ^4_2$  on $P$ and by Corollary \ref{Bplus} we get the group $B^+$. One checks the dimensions of the fixed loci of involutions of $B^+$ (see Diagram \ref{diag:tower_curves_I.2}) to distinguish 
a unique involution, called $\s$ whose fixed locus is the only two dimensional component in the isotypical decomposition of $P$ with respect to $B$, see Proposition \ref{isotyp_I.2PB}. Denote by $JH'=\Fix(\s)$. 
Now, there are precisely 8 involutions that act on $JH'$ as $(-1)$, (namely $-1,\ -\s,\ \tau,\ \s\tau,\ j,\ j\s,\ -j\tau,\ -j\s\tau$). Only two of them have a fixed locus being a surface with restricted polarisation type $(2,2)$. We call them $j$ and $j\s$. 

Now, consider the subvarieties of dimension 4 containing $JH'$. They are 
\begin{eqnarray*}
JH'+E+F,\ JH'+E'+F, \ JH'+E+F',\\
JH'+E'+F', \ JH'+E+E',\  JH'+F+F',
\end{eqnarray*}
where $E,E',F,F'$ are elliptic curves that come from isotypical decomposition with respect to $B$.
By Lemma \ref{fourfolds} only two of these fourfolds are Jacobians of genus 4 curves with restricted polarisation of type $(2,2,2,2)$, called $JC_{j\tau}$ and $JC_{j\s\tau}$ and they are fixed locus of involutions called $j\tau$ and $j\s\tau$ respectively.

Observe that by construction $JH' \subset JC_{j\tau} $ and $ JH' \subset JC_{j\sigma\tau} $. According to Lemma \ref{key_lemma}, these inclusions determine double coverings $h_1:  C_{j\tau}\ra H'$ and 
$h_2:  C_{j\sigma\tau}\ra H' $, each of them ramified in two points, given as the quotient by the  involution $\s$. 
Then the (normalised) fibered product $\tC:=\widetilde{C_{j\sigma\tau} \times_{H'}   C_{j\tau}} $ is a smooth curve of genus 9, see Diagram \ref{fibred_product_I.2}. 
\begin{equation}\label{fibred_product_I.2}
\xymatrix@R=1.2cm@C=1.4cm{
 \widetilde{C_{j\sigma\tau} \times_{H'}   C_{j\tau}} \ar[d]_{2:1}^{+4} \ar[r]^{ \quad 2:1}_{\quad +4} &   C_{j\sigma\tau}  \ar[d]^{2:1}_{+2}  \\
 C_{j\tau} \ar[r]_{2:1}^{+2} &  H' 
}
\end{equation}

Note that all maps are double coverings, hence given by involutions, so they form a subdiagram of a (branched) Klein covering. This corresponds to  the following subdiagram 
\begin{equation}\label{diag:MumI2}
\xymatrix@R=.9cm@C=.6cm{
& \tC \ar[dr] \ar[dl] \ar[d] & \\
C_{j\tau}\ar[dr] &  C_{\s} \ar[d] & C_{j\s\tau} \ar[dl]\\
&H'& 
}
\end{equation}
of Diagram \ref{diag:tower_curves_I.2}.
This shows that $\tC$ satisfies the universal property of the fibered product and therefore is isomorphic to 
$\widetilde{C_{j\sigma\tau} \times_{H'}   C_{j\tau}}$.
Since on $\tC$ acts  $\langle \s,j\tau \rangle \simeq \ZZ_2^2$, we recover the full set of involutions by lifting the hyperelliptic involution from $H'$ to $\tC$ (by construction it is liftable) and therefore the map $\tC\to H\in\cRH_3^{ni}$.

 \end{proof}

\begin{rem}
Following Diagram \ref{diag:tower_curves_I.2} and Proposition \ref{isotyp_I.2PB} consider the covering $\tC\to {C_\tau}$. Since $JC_{\tau}=JH^*\boxplus E\boxplus F$, we get that the Prym variety of the covering equals $P(\tC/C_\tau)=JH'\boxplus E'\boxplus F'$. 
The involution $j\tau$ is genuine and its restriction to $P$ has a fixed point set equal to  $JC_{j\tau}\cap P(\tC/C_{\tau})=JH'\boxplus E'$, hence is of codimension 1 in $P(\tC/C_{\tau})$. This shows that $j\tau$ is a pseudoreflection of geometric origin according to \cite{ALN}. 
Moreover, similar computation applies to the involution $j\s\tau$, so it is also a pseudoreflection in $P(\tC/C_{\tau})$. Therefore, we have found an example of a Prym variety with two pseudoreflections. 
Since the dimension of the Prym variety equals $4$, the family is different from the family of intermediate Jacobians of cubics with two Eckhardt points presented in \cite{ALN}.
    \end{rem}

\section{Isotropic Klein coverings of type II.1}

In this section we consider the case of the isotropic Klein coverings $f:\tC \ra H $ with associated isotropic subgroup generated by elements of the form  $\eta= w_1-w_2+w_3-w_4, \ \xi=w_1-w_2+w_5-w_6$. 

\begin{prop}\label{2222}
The following data are equivalent.
    \begin{enumerate}
        \item the space $\PP'_{2,2,2,2}$;
        \item the space of (isotropic) Klein coverings of hyperelliptic genus 3 curves of type $II.1$.
    \end{enumerate}
\end{prop}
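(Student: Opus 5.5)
The plan follows the proof of Proposition \ref{233}: we build a map from $\PP'_{2,2,2,2}$ to the space of type II.1 coverings and then an inverse.

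\emph{From points to coverings.} Given an unordered collection of four unordered pairs $\{u_1,u_2\},\{u_3,u_4\},\{u_5,u_6\},\{u_7,u_8\}$ of distinct points of $\PP^1$, let $H$ be the genus 3 hyperelliptic curve branched over all eight points, with Weierstrass points $w_i$ over $u_i$. Attach to each pair $p_k=\{w_{2k-1},w_{2k}\}$ the $2$-torsion point $\beta_k=w_{2k-1}-w_{2k}$; it depends only on the pair, since $2w_i\sim 2w_j$. We have $\beta_1+\beta_2+\beta_3+\beta_4=\alpha_I=0$, and the elements $\beta_k+\beta_l$ are exactly the points $\alpha_{p_k\cup p_l}$. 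We take
\[
G=\{0,\ \beta_1+\beta_2,\ \beta_1+\beta_3,\ \beta_1+\beta_4\}.
\]
Using $\beta_1+\beta_4=\beta_2+\beta_3$ and the analogous identities, this is the set $\{\alpha_{p_k\cup p_l}\}$. It is therefore symmetric in the four pairs and independent of all labelling choices. It matches the normal form II.1 with $\eta=w_1-w_2+w_3-w_4$ and $\xi=w_1-w_2+w_5-w_6$. Isotropy is immediate: two distinct unions $p_k\cup p_l$ and $p_k\cup p_m$ meet in exactly one pair, i.e.\ in two points. Since $G$ is a Klein subgroup of $JH[2]$, the construction of Diagram \eqref{diag:Klein_cov_d} gives a Klein covering of type II.1.

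\emph{From coverings to points.} Let $\langle\eta,\xi\rangle$ be of type II.1. We must show that the partition of $W$ into four pairs is determined by the group alone, and not by the chosen generators. The key step, and the main obstacle, is the following claim: every nonzero element of the group is $\alpha_S$ with $|S|=4$ (up to complement), and the three sets $S$ arising this way pairwise intersect in exactly two points.

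\emph{Plan for the key step.} First compute $\eta+\xi=w_3-w_4+w_5-w_6$, using $2w_1\sim 2w_2$. So the three nonzero elements correspond to $\{1,2,3,4\}$, $\{1,2,5,6\}$ and $\{3,4,5,6\}$, each determined up to complement in $I$. Then show that this configuration determines the pairs canonically. For each $k\in I$ record the vector in $\FF_2^3$ telling which of the three sets contain $k$. Replacing a set by its complement flips one coordinate for every $k$ simultaneously, so the induced partition of $I$ into fibres of this map does not depend on the choice of representatives. For the standard configuration the fibres are exactly $\{1,2\},\{3,4\},\{5,6\},\{7,8\}$. Hence the unordered set of four pairs is intrinsic to $G$.

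\emph{Conclusion.} Projecting the pairs to $\PP^1$ gives an element of $\PP'_{2,2,2,2}$. Isomorphic coverings give isomorphic data, because an isomorphism of curves preserves Weierstrass points and hence this partition. The two constructions are visibly inverse to each other. Since $\PP'_{2,2,2,2}$ is irreducible of dimension $5$, this also yields the irreducibility needed in Lemma \ref{4components}. The count $105$ in that lemma, the number of ways to split $8$ points into $4$ unordered pairs, serves as a consistency check.
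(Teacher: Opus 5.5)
Your proof is correct and follows the same route as the paper. From the four pairs you build $H$ and the group $\{0\}\cup\{\alpha_{p_k\cup p_l}\}$; your $w_1-w_2+w_3-w_4$ and the paper's $w_1+w_2-w_3-w_4$ are the same $2$-torsion point, since $2w_2\sim 2w_3$. Conversely, you read the pairs off the group, and your incidence-vector argument in $\FF_2^3$ supplies a justification the paper leaves implicit: the partition of the Weierstrass points into four pairs depends only on the group, not on the chosen generators or on the choice of representative $S$ versus $I\setminus S$.
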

\begin{proof}
    Starting from an element $(\{u_1,u_2\},\{u_3,u_4\},\{u_5,u_6\}.\{u_7, u_8\})
    \in \PP'_{2,2,2,2}$, we consider the unique genus 3 hyperelliptic curve which is double covering of $\PP^1$ branched in these 8 points. Then the preimages of these points become Weierstrass points, denoted by $w_i$,
     $i=1, \ldots, 8$. Choose the unique Klein group generated by the differences of the pairs, i.e., take any 3 pairs of points that are not pairwise complementary to each other. We get for example $\{w_1+w_2-w_3-w_4, w_1+w_2-w_5-w_6, w_1+w_2-w_7-w_8\}$. Note that the group does not depend on the labeling of Weierstrass points and by the properties of the Weierstrass points the group (as a subgroup of the Jacobian) is unique. 

    A Klein covering of type $I.2$ of hyperelliptic genus 3 curve, is constructed uniquely from the curve and its Klein subgroup of 2-torsion points of the form $\{w_1+w_2-w_3-w_4, w_1+w_2-w_5-w_6, w_1+w_2-w_7-w_8\}$. 
    Hence, looking at the images of the Weierstrass under the hyperelliptic covering in $\PP^1$ we obtain an element of $\PP'_{2,2,2,2}$.
\end{proof}

Let $j$ be a lift of the hyperelliptic involution on $\tC$. The possible lifts are indistinguishable
as the following proposition shows.
\begin{prop}\label{lift1}
The cardinality of the fixed points of the involutions on $\tC$ are 
$$
|\Fix ( j)|= |\Fix ( j\tau)|= |\Fix ( j\sigma)| = |\Fix ( j\tau \sigma)| =8.
$$
\end{prop}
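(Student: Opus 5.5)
The plan mirrors the fixed-point count in the I.2 case: combine the total count of fixed points of lifts with Mumford's description (Proposition \ref{Mumford}) applied to each intermediate double cover.

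\textbf{Total count.} $\tC$ has genus $9$ and $f$ is \'etale of degree 4. Each of the 8 Weierstrass points of $H$ has 4 preimages, giving 32 points. Any such preimage $p$ is fixed by exactly one of the four lifts $j, j\s, j\tau, j\s\tau$. Indeed, the stabiliser of $p$ in the group $\langle \s,\tau,j\rangle\simeq\ZZ_2^3$ meets $\langle\s,\tau\rangle$ trivially, because that subgroup acts freely, and it surjects onto the stabiliser $\langle \iota\rangle$ of $f(p)$, where $\iota$ is the hyperelliptic involution. Hence
$$|\Fix(j)|+|\Fix(j\s)|+|\Fix(j\tau)|+|\Fix(j\s\tau)|=32.$$

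\textbf{Counts over one intermediate curve.} Consider $C_\eta=C_\s$ with $\eta=w_1-w_2+w_3-w_4$. By Proposition \ref{Mumford} with $k=1$ and $g=3$, $C_\eta$ is the normalised fibre product of the genus 1 curve branched over $u_1,\dots,u_4$ and the genus 1 curve branched over $u_5,\dots,u_8$. The two lifts of $\iota$ to $C_\eta$ fix the preimages of $u_1,\dots,u_4$ and of $u_5,\dots,u_8$ respectively, so each fixes $8$ points.

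Over a fixed point $q$ of one of these lifts, call it $\iota'$, the two lifts of $\iota'$ to $\tC$ are $j$ and $j\s$ (for a suitable choice of $j$). The two preimages of $q$ are each fixed by exactly one of them, by the same stabiliser argument as above. To decide how the fixed points split between $j$ and $j\s$, I will write $h_\eta^*$ of $\xi$ on $C_\eta$ in terms of Weierstrass-type points and apply Proposition \ref{Mumford} again. Since $\xi=w_1-w_2+w_5-w_6$, its pullback $k_\eta^*\xi$ is a difference of ramification points of $C_\eta\to\PP^1$ taken from both halves.

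The cleanest way to get the split is symmetry. The four lifts play interchangeable roles: the configuration of type II.1 is symmetric under permutations of $\PP'_{2,2,2,2}$ that swap the pairs, and this symmetry permutes $\eta$, $\xi$, $\eta+\xi$ and hence $\s,\tau,\s\tau$. The sub-claim I expect to hold is the following. For each $\alpha\in\{\s,\tau,\s\tau\}$ and any lift $\iota'$ on $C_\alpha$, the lifts of $\iota'$ to $\tC$ split its 8 fixed points as $4+4$. Then $|\Fix(j)|+|\Fix(j\alpha)|=8$ for one pairing and $=24$ for the complementary one, or each pair sums to 16. With all three pairings $\{j,j\s\},\{j,j\tau\},\{j,j\s\tau\}$ each summing to 16 together with the total 32, the four unknowns are forced to be equal, namely $8$ each.

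\textbf{Main obstacle.} The crux is the sub-claim. Equivalently, I must show that $|\Fix(j)|+|\Fix(j\alpha)|=16$ for every $\alpha$, i.e. that for each of the three quotients $C_\alpha$ the two lifts of $\iota$ to $C_\alpha$ have 8 fixed points each. That part follows from the $k=1$ case of Proposition \ref{Mumford} applied to every element of the group, since each element $w_a-w_b+w_c-w_d$ is of that form.

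Summing over the three $\alpha$ and using the total gives
$$3|\Fix(j)|+\bigl(32-|\Fix(j)|\bigr)=48,$$
so $|\Fix(j)|=8$. The same argument, with $j$ replaced by any other lift, gives 8 for each of them. This choice-free argument also justifies the remark that the lifts are indistinguishable.
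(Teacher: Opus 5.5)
Your final argument is correct and is essentially the paper's. Mumford's $k=1$ case gives each lift of $\iota$ on each $C_\alpha$ exactly 8 fixed points, so $|\Fix(j)|+|\Fix(j\alpha)|=16$, and a linear system then forces all four counts to equal 8; the paper uses all six pairwise sums, while you use three of them plus the total of 32. The detour about a $4+4$ split, the appeal to symmetry of $\PP'_{2,2,2,2}$, and the ``8 or 24'' alternative are unnecessary and somewhat muddled, because the pair sum is $2\cdot|\Fix(\iota')|=16$ no matter how the preimages distribute between $j$ and $j\alpha$.
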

\begin{proof}
    Let $\iota$ be the hyperelliptic involution on $H$. By Proposition \ref{Mumford}, we have that both its lifts to $C_\eta$ have 8 fixed points each. Hence the sum of the fixed points of the lifts are 16 for a pair $j,j\s$, as well as for a pair $j\tau,j\s\tau$. We proceed similarly for $C_\xi$ and $C_{\eta+\xi}$ and obtain that the cardinalities of the fixed point set of any pair of lifts add up to 16. This means that all lifts have 8 fixed points. 
\end{proof}

\begin{cor}\label{generaII.1}
The genus of the quotient curve $C_{\alpha}$ 
of $\tC$ by an  involution $\alpha \in \{j,j\sigma, j\tau,  j\sigma\tau \} \subset 
\Aut(\tC)$ is 3. 
\end{cor}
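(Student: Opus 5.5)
This follows from Riemann--Hurwitz applied to each quotient map $\tC\to C_\alpha$, using the fixed-point counts of Proposition \ref{lift1}. First I will compute $g(\tC)$. Since $f:\tC\to H$ is an étale covering of degree $4$ over a genus $3$ curve, Riemann--Hurwitz gives
$$2g(\tC)-2 = 4(2\cdot 3-2)=16,$$
so $g(\tC)=9$.

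Next, take $\alpha\in\{j,j\sigma,j\tau,j\sigma\tau\}$. Each such $\alpha$ is an involution of $\tC$, because it lifts the hyperelliptic involution $\iota$ of $H$ and the deck group is elementary abelian. In fact $\langle \sigma,\tau,j\rangle\simeq\ZZ_2^3$, as set up before Proposition \ref{lift1}. Hence the quotient $\tC\to C_\alpha$ is a double covering. Its branch points are exactly the fixed points of $\alpha$, and by Proposition \ref{lift1} there are $8$ of them. Riemann--Hurwitz then gives
$$2\cdot 9-2 = 2\bigl(2g(C_\alpha)-2\bigr)+8,$$
that is, $16 = 4g(C_\alpha)+4$. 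So $g(C_\alpha)=3$ for each of the four lifts.

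There is no real obstacle here. The only point to check is that every element of the coset $j\langle\sigma,\tau\rangle$ is indeed an involution, so that the quotient map has degree $2$. This holds because the group generated by $\sigma$, $\tau$ and one lift $j$ is abelian of exponent $2$, and that in turn holds because $j$ normalises the deck group (acting trivially on it, as the paper implicitly uses when writing $\langle\sigma,\tau,j\rangle$).
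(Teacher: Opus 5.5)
Your argument is correct and is the paper's own reasoning: the paper gives this as an immediate corollary of Proposition \ref{lift1}, and Riemann--Hurwitz with $g(\tC)=9$ and $8$ fixed points for each lift gives $g(C_\alpha)=3$. Your check that each lift is an involution is also fine; the quickest justification is that $j^2$ is a deck transformation of an \'etale cover and has fixed points, hence is trivial, and $j$ commutes with $\sigma,\tau$ because the hyperelliptic involution acts trivially on $JH[2]$.
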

In the following tower of quotient curves the genera at each row from the top down are 9,5,3 and 1.
\begin{equation}\label{diag:tower_curves_II.1}
\xymatrix@R=.9cm@C=.5cm{
&&&&& \tC \ar[d] \ar[dllll] \ar[drrrr] &&&&& \\
&C_{\xi} \ar[ddl] \ar[ddr] \ar[drrrr]&&&& C_{\eta}\ar[d] 
\ar[ddl]  \ar[ddr] &&&& C_{\eta+\xi} \ar[dllll] \ar[ddl]  \ar[ddr] & \\
&C_j \ar[dl] \ar[drrr] \ar[drrrrrrr] &&  C_{j\sigma} \ar[dr] \ar[dl]\ar[drrrrrrr]
&& H && C_{j\tau} \ar[drrr] \ar[dl]  \ar[dlllllll] &&  C_{j\sigma\tau}\ar[dl] \ar[dlll]  \ar[dlllllll] &\\
F && F' && E &&  E' && G && G'
}
\end{equation}

\begin{prop} \label{isotyp_II.1}
The isotypical decomposition of $J\tC$ with respect to $\left<\s,\tau,j\right>$
is given by 
\begin{eqnarray*}
J\tC  
  &=&  JH^* \boxplus E \boxplus E'\boxplus F \boxplus F'\boxplus G \boxplus G'.
\end{eqnarray*}
where 
\begin{eqnarray*}
E=\tC/\langle j, j\sigma \rangle, \quad 
F=\tC/\langle j, j\tau \rangle, \quad 
G=\tC/\langle j, j\sigma\tau \rangle, \\
E'=\tC/\langle j\tau, j\sigma\tau \rangle, \quad 
F'=\tC/\langle j\sigma, j\tau\sigma \rangle, \quad 
G'=\tC/\langle j\sigma, j\tau \rangle
\end{eqnarray*}
The restricted polarisations to the isotypical components are of type $(2,2,4)$ on the image of the Jacobian $JH$ and $(4)$ on the elliptic curves $E, F, G, E',F', G'$. Moreover, we have the isogenies
$$
JC_j \sim E\times F\times G, \ 
\ JC_{j\sigma} \sim E \times F' \times G',
\  JC_{j\tau} \sim E' \times F \times G',
\  JC_{j\sigma\tau} \sim E' \times F' \times G,
$$
\end{prop}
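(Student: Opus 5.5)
Following the proof of Proposition \ref{isotyp_I.2PB}, we refine the decomposition of Lemma \ref{basicisodec} by the action of $j$. The group $\langle \s,\tau,j\rangle\simeq\ZZ_2^3$ has eight characters. The trivial one gives $0$, since $\tC/\langle\s,\tau,j\rangle=H/\iota=\PP^1$. The character with $r(\s)=r(\tau)=1$, $r(j)=-1$ gives $JH^*$. Each $h_\alpha^*P(k_\alpha)$, $\alpha\in\{\s,\tau,\s\tau\}$, is a surface; $j$ acts on it, and it splits into two parts on which $j$ acts as $+1$ and as $-1$. The part on which $j=+1$ and $\s=+1$, $\tau=-1$ is the image of the Jacobian of $\tC/\langle \s, j\rangle$ (modulo the trivial part), and similarly for the other sign patterns. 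So the six remaining characters correspond to the six quotients of $\tC$ by Klein subgroups $\langle j\alpha, j\beta\rangle$ containing no element of $\langle\s,\tau\rangle$ other than the identity, together with their counterparts. This gives precisely the six curves $E,F,G,E',F',G'$ of the statement.

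Next we compute the genera of these quotients using Proposition \ref{lift1}. Each $j\alpha$ has $8$ fixed points on $\tC$, and $\s,\tau,\s\tau$ act freely. For $\langle j\alpha,j\beta\rangle$ the Riemann--Hurwitz formula for a $\ZZ_2^2$ quotient gives $16=4(2g-2)+8+8$, hence $g=1$. This confirms that each of the six components is an elliptic curve, and the dimensions add up: $3+6=9=g(\tC)$. The genus of $C_{j\alpha}$ is $3$ by Corollary \ref{generaII.1}. Each $C_{j\alpha}$ has three further involutions induced by $\s,\tau,\s\tau$, and each has quotient $E$, $F$, $G$, $E'$, $F'$ or $G'$ as listed in Diagram \ref{diag:tower_curves_II.1}. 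This yields the isogenies $JC_j\sim E\times F\times G$ and so on. Each elliptic curve appears in exactly two of the $JC_{j\alpha}$, which is consistent with the diagram.

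Finally we determine the restricted polarisations. For $JH^*$ we use Lemma \ref{polonJH}: in the isotropic case $JH/G$ has type $(1,1,2)$ in the principal normalisation, and $f^*\tilde\Theta=4\Theta_H$, which forces type $(2,2,4)$. For an elliptic component, say $E$, the map $\tC\to E$ has degree $4$. We will check with Corollary \ref{coremb} that $E\to\tC$ does not factor through an étale double cover, so that $E$ embeds. Since $\tC\to C_j\to E$, the intermediate double cover $C_j\to E$ is ramified, and the other intermediate curves $C_{j\s}$ and $C_\s$ are not étale over $E$ either: the genus is not $2g(E)-1=1$, because $g(C_\s)=5$ and $g(C_{j\s})=3$. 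Then the pullback of the principal polarisation along a degree $4$ map restricted to an embedded elliptic curve is of type $(4)$.

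The main obstacle is the bookkeeping: identifying which sign pattern corresponds to which named curve, and checking that every elliptic quotient map is non-étale at each intermediate level so that the pullbacks are embeddings. We will organise this with the fixed-point counts from Proposition \ref{lift1}.
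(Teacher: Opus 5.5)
Your proposal is correct and follows essentially the same route as the paper. It matches the paper at each step: characters of $\ZZ_2^3$; embedding of each elliptic curve via Corollary \ref{coremb}, using that the intermediate curves have genera $5,3,3\neq 1$; Lemma \ref{polonJH} for $JH^*$; exponent equal to the degree $4$; and the isogenies from the three quotients of each genus-$3$ curve $C_{j\alpha}$. Your explicit Riemann--Hurwitz check that $g=1$ is a useful addition.

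One wording slip should be fixed. Every $\langle j\alpha, j\beta\rangle$ does contain the nontrivial element $\alpha\beta\in\langle\s,\tau\rangle$, and your own fixed-point count relies on it acting freely. So these six groups are the index-two subgroups of $\langle\s,\tau,j\rangle$ other than $\langle\s,\tau\rangle$, not groups meeting $\langle\s,\tau\rangle$ trivially.
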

\begin{proof}
Note that $\ZZ_2^3$ has 7 non-trivial representations and one can check that the decomposition fill in all of them. Moreover, by Diagram \ref{diag:tower_curves_II.1} one checks that for the curve $E$, we have three quotient maps from $C_\eta, C_{j\s}, C_j$ that are curves of genus $g>1$, so a quotient map $\tC\to E$ does not factorize via a cyclic \'etale covering, hence $E$ is embedded into $J\tC$, see Corollary \ref{coremb}. Similarly for the other elliptic curves. The type of the restricted polarisation on $JH^*$ has been established in Lemma \ref{polonJH} and the exponent of the elliptic curves equals the degree of the quotient map that is $4$. The isogenies follow  from the fact that the genera of the curves equal 3 and there are 3 quotient maps for each curve, see Diagram \ref{diag:tower_curves_II.1}.
\end{proof}
\begin{rem}
    One can actually check that $JC_j=E\boxplus F\boxplus G$ as an abelian subvariety of $P(\tC/H)$ or $J\tC$.  
\end{rem}

\begin{lem} \label{intersection_Jacobians_II.1}
The intersection of the Jacobians $JC_{j} \cap JC_{j\sigma}$ embedded in $J\tC$ is the elliptic curve $E$.
\end{lem}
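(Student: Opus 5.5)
We need to show that $JC_j\cap JC_{j\s}$, taken inside $J\tC$, equals $E$. In the II.1 setting of Diagram \ref{diag:tower_curves_II.1}, the curves $C_j$ and $C_{j\s}$ have genus 3 (Corollary \ref{generaII.1}) and $E=\tC/\langle j,j\s\rangle$.

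The plan is to describe both Jacobians as connected fixed loci and then intersect them, using the isotypical decomposition of Proposition \ref{isotyp_II.1}. First check that the pullbacks $JC_j\to J\tC$ and $JC_{j\s}\to J\tC$ are embeddings. The maps $\tC\to C_j$ and $\tC\to C_{j\s}$ are ramified in $8$ points by Proposition \ref{lift1}, so they do not factor through an étale covering, and pullback is injective (cf. Lemma \ref{key_lemma}). By the remark after the definition of genuine involutions, $JC_j=\Fix(j)^0=\Ima(1+j)$ and $JC_{j\s}=\Fix(j\s)^0=\Ima(1+j\s)$.

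Next, read off the action from the representation-theoretic decomposition. Each isotypical component of $J\tC$ with respect to $\langle \s,\tau,j\rangle$ carries a character, so $j$ and $j\s$ act on it by $\pm 1$. By the isogenies in Proposition \ref{isotyp_II.1}:
\begin{itemize}
\item $j$ acts as $+1$ exactly on $E,F,G$ and as $-1$ on $JH^*,E',F',G'$;
\item $j\s$ acts as $+1$ exactly on $E,F',G'$.
\end{itemize}
Hence the common $(+1,+1)$-eigencomponent is $E$, and so $(JC_j\cap JC_{j\s})^0=E$. Equivalently, $E\subset JC_j\cap JC_{j\s}$ because $\tC\to E$ factors through both curves, and by dimension count via idempotents the connected component of the intersection is exactly $E$.

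The main obstacle is showing that the intersection is connected, i.e. that it has no finite part beyond $E$. A point $x\in JC_j\cap JC_{j\s}$ is fixed by $j$ and $j\s$, hence by $\s$. Write $x=e+y$ with $e\in E$ and $y\in F+G$, using $JC_j=E\boxplus F\boxplus G$ from the remark after Proposition \ref{isotyp_II.1}. Then $y$ lies in $(F+G)\cap JC_{j\s}$, and since $F+G\subset\Fix(j)$ the element $\s$ acts on $F+G$ as $-j\s$. We would therefore try to prove $(F+G)\cap JC_{j\s}\subseteq E$. Since $\s=-1$ on $F+G$, such a $y$ satisfies $y=\s y$ and so is $2$-torsion. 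Moreover $y$ is fixed by $j\s$ and lies in $F+G$, so $y\in (F+G)\cap(E+F'+G')$, a finite group of $2$-torsion points inside $JC_j\cap JC_{j\s}$.

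To exclude this group, we plan to use that $E$ is embedded with polarisation type $(4)$ and that $JC_j$ restricted from $J\tC$ has type $(2,2,2)$ (twice principal, as the covering is ramified). Compute $E\cap(F+G)$ inside $JC_j$ via complementary subvarieties: it equals $K(E)\cap\ldots$ with $|E\cap (F+G)|=16$. Do the same in $JC_{j\s}$ and compare. Alternatively, if that bookkeeping fails, conclude directly that any $2$-torsion $y$ fixed by $\langle \s,\tau,j\rangle$-elements in both is absorbed into $E[2]\subset E$, so that $x=e+y\in E$. This torsion analysis is the delicate step. If it resists, fall back to proving the statement as equality of connected components, which suffices for the subsequent reconstruction.
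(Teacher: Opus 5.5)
Your identity-component argument is correct and is the paper's argument made explicit. The paper only writes $JC_j=\Fix(j)$, $JC_{j\s}=\Fix(j\s)$ and $E=\Fix(\langle j,j\s\rangle)$. Your character check ($j=+1$ exactly on $E,F,G$, and $j\s=+1$ exactly on $E,F',G'$) gives $(JC_j\cap JC_{j\s})^0=E$. The paper's equalities hold only up to connected components: $\Fix(j)$ has $2^6$ components, one for each even set of ramification points of $\tC\to C_j$ modulo complement. So the paper's proof also passes over the finite part. Your reduction of that part to the $2$-torsion group $(F+G)\cap JC_{j\s}$ is right, with one slip: on $F+G$ the involution $\s$ equals $j\s$ (both are $-1$ there), not $-j\s$.

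The plan for removing that torsion does not work as written.
\begin{itemize}
\item The count is wrong. $E$ and $F+G$ are the pullbacks under $\pi_j^*$ of the complementary subvarieties $p^*JE$ and $P(C_j/E)$ of the principally polarised $JC_j$, where $p:C_j\to E$ has degree $2$. Hence $|E\cap(F+G)|=4$, not $16$. The type $(4)$ you used comes from the non-principal $2\Theta_{C_j}$.
\item More seriously, knowing $E\cap(F+G)$ and $E\cap(F'+G')$ cannot decide whether a $2$-torsion point of $F+G$ outside $E$ lies in $E+F'+G'$.
\item Your alternative, that such points are ``absorbed into $E[2]$'', is just the claim itself.
\end{itemize}

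What closes the gap is ramification.
\begin{itemize}
\item Since $\s=j\cdot j\s$ fixes the intersection and $\pi_j^*$ is injective, $JC_j\cap JC_{j\s}\subseteq \pi_j^*\Fix(\bar\s)$. Here $\bar\s$ is the involution of $C_j$ over $E$, with $4$ fixed points $R_a$.
\item By Mumford's description of fixed loci for ramified double covers, $\pi_j^*\Fix(\bar\s)$ is $E$ together with its translates $E+\pi_j^*(R_a-R_b)$.
\item A point $\pi_j(x)$ is fixed by $\bar\s$ iff $j\s x=x$. Hence $\pi_j^*(R_a-R_b)=x_a+\s x_a-x_b-\s x_b$ is supported, with odd multiplicities, on exactly four of the eight ramification points of $\tC\to C_{j\s}$.
\item Such a divisor is never a pullback from $C_{j\s}$: the signs of its natural linearisation at the fixed points of $j\s$ are not constant. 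So these translates miss $JC_{j\s}\supseteq E$.
\end{itemize}
This gives $JC_j\cap JC_{j\s}=E$ exactly. Without this step, your proposal, like the paper's proof, establishes only $(JC_j\cap JC_{j\s})^0=E$. That weaker statement is, however, all the subsequent fibred-product reconstruction uses.
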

\begin{proof}
    Note that $JC_j=\Fix(j), JC_{j\s}=\Fix(j\s)$ and $E=\Fix(\langle j,j\s\rangle)$ is the intersection of the two.
\end{proof}

As before, we would like to distinguish Jacobians inside the Prym variety. In order to do this, note that we have 8 genuine involutions and 8 artificial ones.
The dimensions of the fixed loci of these
involutions are as follows. For $\pm1$, the fixed locus is $J\tC$ or $0$. For $\s,\tau,\s\tau$ the dimension equals 2, hence for artificial ones $-\s,-\tau,-\s\tau$ we get that the dimension equals 4. 
As stated before, for $j,j\s,j\tau,j\s\tau$, the dimension of the fixed locus equals 3. Moreover, we can proceed as in case $I.2$ to distinguish genuine from artificial involutions in the following way. Since $j$ is an involution with fixed points, the restricted polarisation to $JC_j$ is of type $(2,2,2)$. Since $\Ker(\psi^*(\Xi)) \supset \ZZ_4^2$ the polarisation type of  $(JC_j)^c=\Fix(-j)$ is of the form $(*,*, 4a)$ with $a\in \ZZ_+$. Similarly for the other involutions. As a consequence, one can distinguish genuine from artificial involutions in this case.

\begin{thm}
The  Prym map 
$\cP^{iso}_3 :  \cRH_3^{iso} \ra  \cA_6^{(1,1,1,2,2,4)}$  is injective on the irreducible component consisting of Klein coverings of type II.1.
\end{thm}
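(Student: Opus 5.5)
Starting from $(P,\Xi)$ in the image of a type II.1 covering, I would first use Proposition \ref{autoofP} to recover the group $B=\langle \s,\tau,j,-1\rangle\simeq\ZZ_2^4$ intrinsically, and Corollary \ref{Bplus} to recover $B^+=\langle \s,\tau,-j\rangle$. Among the fourteen involutions in $B\setminus\{\pm1\}$, the fixed loci have dimension $2$ for $\s,\tau,\s\tau$, dimension $4$ for $-\s,-\tau,-\s\tau$, and dimension $3$ for the eight involutions $\pm j,\pm j\s,\pm j\tau,\pm j\s\tau$. So $\langle \s,\tau\rangle$ is determined as the set of involutions with $2$-dimensional fixed locus, together with the identity. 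Among the eight involutions with $3$-dimensional fixed locus, the polarisation argument stated just before the theorem separates the four genuine ones from the artificial ones. The genuine ones have fixed locus $JC_\alpha$ with restricted type $(2,2,2)$. The artificial ones have a complement containing $\ZZ_4^2$ in the kernel, so their type is $(*,*,4a)$ and cannot be $(2,2,2)$. This recovers the abelian threefolds $JC_j, JC_{j\s}, JC_{j\tau}, JC_{j\s\tau}$ inside $P$, each carrying a principal polarisation (half of the restricted one). By Torelli, this gives the four genus $3$ curves $C_j,C_{j\s},C_{j\tau},C_{j\s\tau}$. Since the four lifts are indistinguishable (Proposition \ref{lift1}), no choice made here matters up to relabelling.

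Next I would reconstruct $\tC$ as a fibred product, as in case I.2. The natural choice of base is the elliptic curve $E=JC_j\cap JC_{j\s}$ (Lemma \ref{intersection_Jacobians_II.1}), which is determined by $P$ as the intersection of two of the recovered Jacobians. The pair $\{j,j\s\}$ is picked out as the pair whose product $\s$ lies in the distinguished subgroup. Since $E=\tC/\langle j,j\s\rangle$, we have double coverings $C_j\to E$ and $C_{j\s}\to E$, each ramified in $4$ points, induced by $\s$. A covering ramified at $4$ points does not factor through an étale double cover, so Lemma \ref{key_lemma} applies: the embeddings $E\subset JC_j$ and $E\subset JC_{j\s}$ given by inclusion in $P$ determine these double covers up to an automorphism of $E$. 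The normalised fibred product $\widetilde{C_j\times_E C_{j\s}}$ is then a curve with a $\ZZ_2^2$-action. Riemann–Hurwitz gives genus $9$ once the branch loci of the two covers are disjoint, which holds because $\tC$ itself realises the subdiagram $\tC\to C_j, C_{j\s}, C_\eta\to E$ of Diagram \ref{diag:tower_curves_II.1}. By the universal property it is isomorphic to $\tC$.

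Finally, I would recover the Klein covering. On $\tC$ we have the action of $\langle j,j\s\rangle$. I would argue that the involution $\s$ (the composition) is fixed-point free. One then needs $\tau$, obtained by lifting the remaining automorphisms: $\tau$ acts on $E$ (as $E=C_j/\langle\s\rangle$ and $\tau$ commutes with everything) and must be shown to lift to the fibred product. Alternatively, and I would try this first, the hyperelliptic curve $H$ is recovered as the quotient of $C_\eta=\tC/\langle\s\rangle$ by the induced involution. Then $\tau$ is recovered because $JC_{j\tau}$ is known: the covering $\tC\to C_{j\tau}$ is determined by Lemma \ref{key_lemma} applied to $JC_{j\tau}\subset J\tC$, which is an embedding since $\tC\to C_{j\tau}$ is ramified. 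I expect this last step, recovering all of $\langle\s,\tau,j\rangle$ on the reconstructed $\tC$ compatibly, to be the main obstacle. The first two steps are close to the I.2 argument, but here the base $E$ has genus $1$, so the lifting of the extra involution has to be checked carefully rather than read off from a hyperelliptic involution of the base.
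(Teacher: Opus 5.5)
\textbf{There is a genuine gap: you never actually recover $\tau$, and hence never recover the covering $\tC\to H$.}

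Up to the identification $\tC\cong\widetilde{C_j\times_E C_{j\s}}$, your argument is the paper's. That includes recovering $B$, singling out $\langle\s,\tau\rangle$ by the $2$-dimensional fixed loci, and separating genuine from artificial lifts by polarisation type. It also includes Torelli and the fibred product over $E$ via Lemma \ref{intersection_Jacobians_II.1} and Lemma \ref{key_lemma}. A small correction: every pair of genuine lifts has product in $\langle\s,\tau\rangle\setminus\{1\}$, so nothing ``picks out'' $\{j,j\s\}$. You simply choose a pair, as the paper does.

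You flag the last step as the main obstacle, and the route you say you would try first does not work.

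\emph{Recovering $H$ from $C_\eta$.} The only involution induced on $C_\eta=\tC/\langle\s\rangle$ by $\langle j,j\s\rangle$ is $j \bmod \s$. Its quotient is $\tC/\langle\s,j\rangle=E$, not $H$. Getting $H=C_\eta/\tau$ requires the very $\tau$ you are after. Nor is there a canonical substitute: $C_\eta$ is not hyperelliptic, by Corollary \ref{farkas-lemma}, since $\eta=w_1-w_2+w_3-w_4$ is not a difference of two Weierstrass points.

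\emph{Applying Lemma \ref{key_lemma} to $JC_{j\tau}\subset J\tC$.} You know $JC_{j\tau}$ only as a subvariety of the abstract $P$, and the first fibred product gives no embedding of $P$ into $J\tC$ (indeed $P=P(\tC/H)$ itself depends on $\tau$). At best you control $JC_j+JC_{j\s}=E\boxplus F\boxplus G\boxplus F'\boxplus G'$. But $JC_{j\tau}=E'\boxplus F\boxplus G'$ involves $E'$. The data $(\tC,\s,j)$ see $E'$ only inside $\Fix(\s)\cap\Fix(-j)=JH^*\boxplus E'$, where both act as scalars, so they cannot single out $E'$.

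\textbf{The missing idea.} Run the same construction a second time with another pair of genuine lifts, say $j$ and $j\tau$; this is what the paper does. Both $C_j$ and $C_{j\tau}$ are already known. The elliptic curve $F=JC_j\cap JC_{j\tau}$ gives, via Lemma \ref{key_lemma}, the double covers $C_j\to F$ and $C_{j\tau}\to F$. By the same universal-property argument, $\widetilde{C_j\times_F C_{j\tau}}$ is again $\tC$. It now carries $\tau$ as the composition of the two deck involutions. Together with $\s$ from the first fibred product this gives $\langle\s,\tau\rangle$, hence $(H,\langle\eta,\xi\rangle)$.

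Your other suggestion, lifting $\tau$ from its action on $C_j$ and $C_{j\s}$, could probably be completed. The relevant deck involutions are those of $C_j\to F$ and $C_{j\s}\to F'$, with $F'=JC_{j\s}\cap JC_{j\s\tau}$. But you would still have to check that they induce the same involution on $E$ before the product involution preserves the fibred product. The second fibred product sidesteps that check.
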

\begin{proof}
Let $(P, \Xi)$ be  the image of a Klein covering of type $II.1$ under $\cP^{iso}_3$. First note that, by Proposition \ref{autoofP}, $(P, \Xi)$ determines the group of involutions $B\simeq \ZZ^4_2$  on $P$.
One then look at the fixed loci of the involutions together with the restricted polarisation types to distinguish $j,j\s,j\tau,j\s\tau$ and $JC_j,JC_{j\s},JC_{j\tau},JC_{j\s\tau}$ respectively. Now, one chooses any two of these, say $j$ and $j\s$ and by Torelli Theorem one recovers $C_j$ and $C_{j\s}$.
By Lemma \ref{intersection_Jacobians_II.1} and Lemma \ref{key_lemma} we obtain the maps $C_j\to E$ and $C_{j\s}\to E$, so we  have the following fibered product.

\begin{equation}\label{fibred_product_II.1}
\xymatrix@R=1.2cm@C=1.4cm{
 \widetilde{C_{j} \times_{E}   C_{j\sigma}} \ar[d]
 \ar[r]
 &   C_{j}  \ar[d]_{2:1}^{+4}  \\
 C_{j\sigma} \ar[r]^{2:1}_{+4} &  E 
}
\end{equation}
where $\widetilde{\ \ \ }$ is the  normalisation of the (possibly) singular curve $C_{j} \times_{E}   C_{j\sigma}.$

Note that all maps are double coverings, hence given by involutions, so they form a subdiagram of a (branched) Klein covering. Looking at Diagram \ref{diag:tower_curves_II.1}, we recover this diagram as the following:
\begin{equation}\label{diag:MumII1}
\xymatrix@R=.9cm@C=.6cm{
& \tC \ar[dr] \ar[dl] \ar[d] & \\
C_{j\s}\ar[dr] &  C_{\eta} \ar[d] & C_{j} \ar[dl]\\
&E& 
}
\end{equation}
This shows that $\tC$ satisfies the universal property and hence it is isomorphic to $\widetilde{C_{j} \times_{E}   C_{j\sigma}}$. Moreover, by construction, from the fibered product, we recover the map $\tC\to C_\eta$ (as given by a composition of involutions).

We perform the same procedure for another pair of involutions on $P$, for example $j,j\tau$. By the uniqueness of the fibered product, we have to obtain $\tC$ again, now with an involution $\tau$. In this way, we recover the Klein covering $\tC\to H$, hence proving that the map is injective. 
\end{proof}

\section{Isotropic Klein coverings of type II.2}
Let $f:\tC \ra H $ be the Klein covering induced by the 
isotropic subgroup generated by $\eta= w_1-w_2, \ \xi= w_3-w_4$. According to Corollary \ref{farkas-lemma}, the curves $C_{\eta}$ and $C_{\xi}$ are  hyperelliptic but $C_{\eta +\xi}$ is not.

\begin{prop}\label{224}
The following data are equivalent:
    \begin{enumerate}
        \item the space $\PP'_{2,2,4}$;
        \item the space of (isotropic) Klein coverings of hyperelliptic genus 3 curves of type $II.2$.
    \end{enumerate}
\end{prop}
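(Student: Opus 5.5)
The plan mirrors the proofs of Propositions \ref{233} and \ref{2222}: we build maps in both directions and check that they are mutually inverse up to the natural equivalences.

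\emph{From points to coverings.} Start with an element $(\{u_1,u_2\},\{u_3,u_4\},\{u_5,u_6,u_7,u_8\})\in\PP'_{2,2,4}$, where the two pairs may be interchanged. Let $H$ be the genus 3 hyperelliptic curve branched over these eight points, and let $w_i$ be the Weierstrass point over $u_i$. Take the Klein subgroup
$$\langle w_1-w_2,\ w_3-w_4\rangle=\{w_1-w_2,\ w_3-w_4,\ w_1-w_2+w_3-w_4\}.$$
This subgroup is well defined, for three reasons. First, $w_1-w_2=w_2-w_1$ in $JH[2]$, so the order within each pair is irrelevant. Second, swapping the two pairs leaves the group unchanged. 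Third, the four points $u_5,\dots,u_8$ never enter, and that is consistent with the relation $w_1-w_2+w_3-w_4=w_5-w_6+w_7-w_8$, which comes from $\alpha_S=\alpha_{I\setminus S}$. A direct check of intersection parities shows the group is isotropic, since $\{1,2\}\cap\{3,4\}=\emptyset$. It is of type II.2, with $\eta=w_1-w_2$ and $\xi=w_3-w_4$. By the bottom-up construction this yields a Klein covering.

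\emph{From coverings to points.} Conversely, take a type II.2 covering, determined by $(H,G)$ with $G=\langle\eta,\xi\rangle$. The task is to show that $G$ determines intrinsically a partition of the eight branch points into types $2,2,4$. Each nonzero element of $JH[2]$ corresponds to an even subset of $\{1,\dots,8\}$ modulo complement, so it has a well-defined ``size'' $\min(|S|,8-|S|)\in\{2,4\}$. By Corollary \ref{farkas-lemma}, the elements of size 2 are exactly those whose double cover is hyperelliptic, so this size is intrinsic. In a type II.2 group, exactly two of the three nonzero elements have size 2, namely $\eta$ and $\xi$, and the third has size 4. This matches the statement that $C_\eta$ and $C_\xi$ are hyperelliptic while $C_{\eta+\xi}$ is not. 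Recording the two size-2 elements as two unordered pairs of Weierstrass points, and the remaining four points as the 4-set, gives an element of $\PP'_{2,2,4}$ via the hyperelliptic projection.

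\emph{Inverse maps.} The two size-2 elements must correspond to disjoint pairs, otherwise their sum would have size 2. For the same reason the complementary four points are exactly the support of the size-4 element. Hence the construction in the first step applied to this data returns $G$, and the construction in the second step applied to the output of the first returns the original configuration. Finally, isomorphisms of coverings over $H$ correspond to automorphisms of $\PP^1$ preserving the configuration, which is precisely the equivalence defining $\PP'_{2,2,4}$.

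\emph{Main obstacle.} The step needing care is the intrinsic characterisation of the partition, that is, the uniqueness of the size-2 presentation. A 2-torsion point $w_a-w_b$ must determine the pair $\{a,b\}$ uniquely. This holds because distinct even subsets modulo complement give distinct 2-torsion points, by the isomorphism $E_g\simeq JH[2]$. For a size-2 element the complement has six points, so no alternative presentation with two points exists. With uniqueness established, the bijection follows. As a consistency check, counting gives $\frac12\binom82\binom62=210$, matching Lemma \ref{4components}.
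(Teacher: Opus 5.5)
Your proof is correct and takes essentially the same approach as the paper. The paper also builds the group $\{w_1-w_2,\,w_3-w_4,\,w_1-w_2+w_3-w_4\}$ from the marked configuration, and conversely reads off the two pairs and the quadruple through the hyperelliptic projection. Your explicit argument that the two size-2 elements, and hence the $(2,2,4)$ partition, are intrinsically determined by the group spells out a step the paper leaves implicit; it rests on the disjointness of their supports and on the uniqueness of the presentation via $E_g\simeq JH[2]$.
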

\begin{proof}
    Starting from an element $(\{u_1,u_2\},\{u_3,u_4\}.\{u_5,u_6, u_7, u_8\})\in \PP'_{2,2,4}$, we can construct a genus 3 hyperelliptic curve as a double covering of $\PP^1$ branched in these 8 points. 
    Then preimages of these points become Weierstrass points, denoted by $w_i$. 
    Now, we choose the Klein group by taking the first pair, the second pair and then both pairs. In coordinates, we get $\{w_1-w_2, w_3-w_4, w_1-w_2+w_3-w_4\}$.
    Certainly, if we swap the pairs, we will obtain the same Klein group, hence the construction from $\PP'_{2,2,4}$ is well defined. Note that the group does not 
    depend on the labeling of the Weierstrass points and by a property of the Weierstrass points, one can choose the last quadruple as the sum of two generators of the group. 

    A Klein covering of type $II.2$ of a hyperelliptic genus 3 curve is constructed uniquely from the curve and from its Klein subgroup of 2-torsion points of the form $\{w_1-w_2, w_3-w_4, w_1-w_2+w_3-w_4\}$. Hence, by considering  their images under hyperelliptic covering to $\PP^1$ we obtain an element of $\PP'_{2,2,4}$.
\end{proof}

As before, we  describe the lifts of the hyperelliptic involution in the following proposition.
\begin{prop}\label{liftII2}
There is a unique lift, denoted by $j$, on $\tC$ with exactly 16 fixed points. Moreover,  $|\Fix ( j\tau \sigma)|= 0 $ and    $|\Fix ( j\tau)|= |\Fix ( j\sigma)| =8 $.
\end{prop}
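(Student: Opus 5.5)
We follow the same strategy as in Proposition \ref{lift1} and the fixed-point count in case I.2. The four lifts of the hyperelliptic involution $\iota$ of $H$ to $\tC$ are $j, j\s, j\tau, j\s\tau$. Their fixed points lie over the $8$ Weierstrass points, and each Weierstrass point has $4$ preimages. The first step is to count the fixed points of the lifts of $\iota$ to each intermediate curve $C_\eta, C_\xi, C_{\eta+\xi}$, using Proposition \ref{Mumford}.

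\emph{Fixed points on the intermediate curves.} For $C_\eta$ with $\eta=w_1-w_2$ we are in the case $k=0$ of Proposition \ref{Mumford}. One lift of $\iota$ fixes the preimages of $u_1,u_2$, giving $4$ fixed points. The other lift fixes the preimages of the remaining $6$ branch points, giving $12$ fixed points; this is the hyperelliptic involution of the genus $5$ curve $C_\eta$, consistent with Corollary \ref{farkas-lemma}. The same holds for $C_\xi$ with $\xi=w_3-w_4$. For $C_{\eta+\xi}$, with $\eta+\xi=w_1-w_2+w_3-w_4$, we take $k=1$: the two lifts have $8$ fixed points each.

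\emph{Lifting to $\tC$.} Each fixed point of a lift $\iota'$ on an intermediate curve $C_\alpha$ has two preimages in $\tC$. Exactly one of the two lifts of $\iota'$ to $\tC$ fixes both of them, since the two lifts differ by the fixed-point-free deck involution. Hence the two lifts of $\iota'$ have fixed-point counts summing to $2|\Fix(\iota')|$. Label the elements so that $C_\eta=\tC/\langle\s\rangle$, $C_\xi=\tC/\langle\tau\rangle$ and $C_{\eta+\xi}=\tC/\langle\s\tau\rangle$. The lifts of $\iota$ to $\tC$ then pair as $\{j,j\s\}$ and $\{j\tau,j\s\tau\}$ over $C_\eta$, as $\{j,j\tau\}$ and $\{j\s,j\s\tau\}$ over $C_\xi$, and as $\{j,j\s\tau\}$ and $\{j\s,j\tau\}$ over $C_{\eta+\xi}$.

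Write $a,b,c,d$ for the numbers of fixed points of $j,j\s,j\tau,j\s\tau$. The counts above give:
\begin{itemize}
\item from $C_\eta$: $\{a+b,\ c+d\}=\{8,24\}$;
\item from $C_\xi$: $\{a+c,\ b+d\}=\{8,24\}$;
\item from $C_{\eta+\xi}$: $a+d=b+c=16$.
\end{itemize}
The total is $a+b+c+d=32$, in agreement with the remark in case I.2.

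\emph{Solving the system.} Relabel $\s\leftrightarrow\s\tau$ if necessary, which amounts to choosing $j$ within its $\langle\s,\tau\rangle$-orbit, so that $a+b=24$. If $a+c=24$, then $b=c=24-a$, and $a+d=16$ with $b+d=8$ gives $a-b=8$. Hence $a=16$, $b=c=8$, $d=0$. If instead $a+c=8$, then $b-c=16$, while $b+c=16$ forces $b=16$, $c=0$, $a=8$, $d=8$. This is the same configuration with $j$ renamed as $j\s$.

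In either case exactly one lift has $16$ fixed points; call it $j$. The lift $j\s\tau$ is then fixed-point-free, and $j\s$, $j\tau$ have $8$ fixed points each. Uniqueness of $j$ is clear because the values $16, 8, 8, 0$ are not all equal.

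The main point requiring care is the bookkeeping of which pairs of lifts sit over which intermediate curve. A cleaner alternative is to compute directly: over $w_1$ (respectively $w_3$, $w_5$) the lift $j$ fixes all $4$ preimages exactly when $j$ restricts to the lift on $C_\eta$ and on $C_\xi$ fixing the corresponding point. Checking this point by point gives $j$ fixing the preimages of $w_5,\dots,w_8$ ($16$ points), $j\s$ and $j\tau$ fixing the preimages of two Weierstrass points each ($8$ points), and $j\s\tau$ fixing none.
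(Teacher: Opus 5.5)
Your argument is correct, but it takes a different route from the paper's proof.

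\textbf{The paper's route.} It applies Proposition~\ref{Mumford} twice along the tower $\tC\to C_\eta\to H$. The first application shows that $C_\eta$ is hyperelliptic of genus $5$, and that its Weierstrass points are the $12$ preimages of $w_3,\dots,w_8$. The paper then observes that $k_\eta^*\xi$ is a difference of the four Weierstrass points of $C_\eta$ lying over $w_3,w_4$. Mumford's construction, now with $g=5$ and $k=1$, therefore gives directly that the two lifts of the hyperelliptic involution of $C_\eta$ to $\tC$ have $16$ and $8$ fixed points. Repeating this over $C_\xi$, and using that the four lifts have $32$ fixed points in total, forces the $16$-point lift to be the same in both cases. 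Hence $j\s\tau$ is fixed-point free.

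\textbf{Your route.} You use only the first-level Mumford counts on all three intermediate curves. You combine them with the pairing rule that the two lifts of $\iota'$ have $2|\Fix(\iota')|$ fixed points altogether, and solve a small linear system. This is exactly the method the paper itself uses for case II.1 in Proposition~\ref{lift1}.

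\textbf{What each buys.} Your route is uniform across the four cases and never needs to recognise $k_\eta^*\xi$ as a Weierstrass difference on $C_\eta$. The price is bringing in $C_{\eta+\xi}$ and a case split. The paper's route is shorter, because the hyperellipticity of $C_\eta$ and $C_\xi$ yields individual counts rather than sums. Your closing fibrewise argument is the most elementary of the three, and it also records which Weierstrass fibres each lift fixes.

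\textbf{One slip.} Relabelling $\s\leftrightarrow\s\tau$ does not give $a+b=24$. It swaps $b$ and $d$, so $a+b$ becomes $a+d=16$, and it also breaks your convention $C_\eta=\tC/\langle\s\rangle$. What you mean, as your parenthetical says, is to replace $j$ by $j\tau$, which sends $(a,b,c,d)$ to $(c,d,a,b)$. Likewise, $j$ is unique because the value $16$ occurs exactly once, not merely because the four values are not all equal.
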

\begin{proof}
Using Proposition \ref{Mumford}, we see that the hyperelliptic involution lifts to a hyperelliptic involution on $C_\eta$ (with 12 fixed points) and another involution with 4 fixed points. Moreover, the pullback of $\xi$ on $C_\eta$ can be written using 4 Weierstrass points (which are the preimages of $w_3,w_4$). 
Hence, using again Proposition \ref{Mumford} the lifts of the hyperelliptic involution to $\tC$ have $16$ and $8$ fixed points. 
We denote them by $j$ and $j\s$. We can perform the same procedure for $C_{\xi}$ to get other involutions, called $j'$ and $j'\tau$. 
Since the total number of fixed points of the lifts of the hyperelliptic involution from $H$ equals 32, we have  that $j=j'$ and hence 
the remaining involution $j\s\tau$ is fixed point free. 
\end{proof}

As a consequence of Proposition \ref{liftII2} we have:  
\begin{cor}\label{genera1}
The genera of the quotient curves of $\tC$ by an  involution $\alpha \in \Aut(\tC)$ are $g(C_j)=1, \ g(C_{j\sigma})= g(C_{j\tau})= 3$ and $g(C_{j\tau\sigma})=5 $.
\end{cor}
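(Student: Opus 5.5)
The genera follow from the Riemann--Hurwitz formula applied to the double coverings $\tC\to C_\alpha=\tC/\langle\alpha\rangle$, using the fixed-point counts of Proposition \ref{liftII2}. First we record $g(\tC)$. Since $f:\tC\to H$ is an \'etale covering of degree $4$ and $g(H)=3$, we get $2g(\tC)-2=4(2\cdot 3-2)=16$, so $g(\tC)=9$. This agrees with the genera $9,5,3,1$ appearing in the analogous towers of the earlier sections.

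Next, for an involution $\alpha$ with $r$ fixed points, the quotient map $\tC\to C_\alpha$ is a double covering branched exactly at the $r$ fixed points. Riemann--Hurwitz then gives
$$16=2g(\tC)-2=2\bigl(2g(C_\alpha)-2\bigr)+r,\qquad\text{i.e.}\qquad g(C_\alpha)=\frac{20-r}{4}.$$
By Proposition \ref{liftII2} we have $r=16$ for $j$, $r=8$ for $j\s$ and $j\tau$, and $r=0$ for $j\s\tau$. Substituting, we obtain $g(C_j)=1$, $g(C_{j\s})=g(C_{j\tau})=3$ and $g(C_{j\s\tau})=5$. For completeness, the genuine involutions $\s,\tau,\s\tau$ are fixed-point free, so their quotients $C_\eta,C_\xi,C_{\eta+\xi}$ have genus $5$. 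This is consistent with these being \'etale double covers of $H$.

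There is no real obstacle here, since all the work was done in Proposition \ref{liftII2}. The only point to check is that every fixed point of an involution is a simple ramification point of the quotient map, so each contributes exactly $1$ to the ramification divisor. This holds because $\alpha$ has order $2$. As a sanity check, the number of fixed points must be divisible by $4$, which it is in every case.
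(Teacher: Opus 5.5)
Your proposal is correct and follows the argument the paper intends. The paper states the corollary as an immediate consequence of Proposition \ref{liftII2}, namely Riemann--Hurwitz for the double covers $\tC\to C_\alpha$ with $g(\tC)=9$ and fixed-point counts $16,8,8,0$; your formula $g(C_\alpha)=(20-r)/4$ yields exactly the stated genera.
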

The non hyperelliptic curve $C_{\eta+\xi}$ admits two lifts of the hyperelliptic involution on $H$, denoted (with an abuse of notation) by 
$j$ and $j':=j\sigma=j\tau$, each of them with 8 fixed points. Let $E:=C_{\s\tau}/j$ and $F= C_{\s\tau}/j'$ be the quotient elliptic curves and let $E_j=\tC/j$. We have the following commutative diagram, where the genera of the curves 
on each row (from the top) are 9,5,3,2,1,0. 

\begin{equation}\label{diag:partial_tower_II2}
\xymatrix@R=.6cm@C=.9cm{
 & & & \tC \ar[dlll] \ar@/_1.0pc/[ddll] \ar[dr] \ar[dl]\ar[dd] \ar[dr] \ar@/^2.0pc/[ddddrrr] \ar[drrr] & & &\\
C_{\tau} \ar[dd] &&  C_{\sigma}\ar[dd] && C_{\sigma\tau} \ar[d] \ar[dddr] \ar[dddl]&&  C_{j\sigma\tau} \\
 & C_{j\sigma}  \ar[dl] \ar@/_1.5pc/[ddrr] && C_{j\tau} \ar[dl] \ar[dd]& H \ar[ddd]&& \\
D_{\tau} && D_{\sigma} &&&&  \\
 &&& F \ar[dr] && E \ar[dl] & E_j  \ar[l]^{2:1} \\
&&&& \PP^1 && 
}
\end{equation}

\begin{prop} \label{isotyp_II.2}
The isotypical decomposition of $J\tC$ with respect to $\left<\s,\tau,j\right>$
is given by 
$$
  J\tC  =  JH^* \boxplus JD_\s^* \boxplus JD_{\tau}^* \boxplus  F\boxplus E_j
$$ 
where $D_\s=\tC/\left<\s,j\tau\right>, D_\tau=\tC/\left<\tau,j\s\right>$. 
Note that the restricted polarisation on $E_j=\tC/j$ is $(2)$, whereas on $F=\tC/\left<\s\tau,j\s\right>$ it is $(4)$.
\end{prop}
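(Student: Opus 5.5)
Following the pattern of Propositions \ref{isotyp_I.2PB} and \ref{isotyp_II.1}, I will start from Lemma \ref{basicisodec}, which gives $J\tC = JH^*\boxplus P(k_\s)^*\boxplus P(k_\tau)^*\boxplus P(k_{\s\tau})^*$. (Here Lemma \ref{kernel} accounts for the stars, since we are in the isotropic case.) Then I refine each summand according to the action of $j$, i.e.\ according to the eight characters of $\langle\s,\tau,j\rangle\simeq\ZZ_2^3$. The character trivial on $\s,\tau$ and equal to $-1$ on $j$ gives $JH^*$, since $j$ induces the hyperelliptic involution on $H$. The character trivial on all three gives $J(\tC/\langle\s,\tau,j\rangle)=J\PP^1=0$. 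For each of the other six characters $r$, the component is the image of $J(\tC/\ker r)$, modulo the pieces already accounted for. Its dimension is read off from genera: the quotient by the index-$2$ subgroup $\ker r$ has genus computable by Riemann--Hurwitz from the fixed point counts in Proposition \ref{liftII2} and Corollary \ref{genera1}.

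The bookkeeping goes as follows. For $\ker r=\langle\s,j\tau\rangle$ the quotient is $D_\s$. For $\ker r=\langle\tau,j\s\rangle$ it is $D_\tau$; both have genus $2$ (the curves $C_{j\tau}\to D_\s$ and $C_{j\s}\to D_\tau$ of genus $3$ are double covers, matching Diagram \eqref{diag:partial_tower_II2}). The quotient $\tC/\langle\s\tau,j\s\rangle$ is $F$, of genus $1$. Then $\langle\s\tau,j\rangle$ gives $E$. Finally $\langle\s,j\rangle$ and $\langle\tau,j\rangle$ give curves dominated by $C_j=E_j$ of genus $1$, hence of genus $\le1$. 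A dimension count then forces exactly one of the three remaining elliptic characters to be nonzero, namely the one identified with $E_j$:
\[
9=3+2+2+1+1 .
\]
Since the components $E$, $E_j$ and the quotients of $E_j$ are all elliptic, I will check which characters are genuinely nonzero. $E_j=\tC/j$ is fixed by $\s,\tau$ up to the quotient map, so $\s$ and $\tau$ act on $E_j$ as translations or as $-1$. Its isotypical character is determined by how $\s,\tau$ act on $H^0(\omega)$ of $E_j$, and that is the step I expect to be the main obstacle: deciding on which of $E_j$, $E$, and $C_j/\s$, $C_j/\tau$ the differential survives. I would settle it by computing fixed points of $\s$, $\tau$, $\s\tau$ acting on $E_j$. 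These are fixed-point-free on $\tC$, but $\s\tau\cdot j=j\s\tau$ has no fixed points while $j\s$ has $8$. From this, $\s\tau$ acts freely on $E_j$ (a translation), while $\s$ acts with fixed points (as $-1$). Hence $E=E_j/\s\tau$ is isogenous to $E_j$, and $E_j/\s\cong\PP^1$. This identifies the character of $E_j$ and shows that $E$ does not contribute a separate summand.

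Finally, the polarisation types. The exponent of the induced polarisation on an embedded elliptic curve equals the degree of the corresponding map from $\tC$, provided the pullback is an embedding. $\tC\to E_j$ has degree $2$ and is ramified (since $j$ has $16$ fixed points), so by Lemma \ref{key_lemma} and Corollary \ref{coremb} it does not factor through an étale map, giving type $(2)$. For $F=\tC/\langle\s\tau,j\s\rangle$, the intermediate covers $C_{\s\tau}\to F$ and $C_{j\s}\to F$ are ramified, and $C_{\s\tau}$ has genus $5\neq 2g(F)-1$. By Corollary \ref{coremb} the pullback is an embedding, so the type is $(4)$.
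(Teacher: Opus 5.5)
Your proposal is correct and follows essentially the paper's route. Like the paper, you refine Lemma \ref{basicisodec} by the action of $j$, identify each piece with the image of the Jacobian of a quotient curve, and use Corollary \ref{coremb} to decide which pullbacks are embeddings; your fixed-point check that $\s\tau$ acts freely on $E_j$, so that the image of $JE$ is $E_j$, just makes explicit what the paper reads off from the \'etale map $E_j\to E$. One small touch-up: when applying Corollary \ref{coremb} to $\tC\to F$ you must check all three intermediate curves $C_{\s\tau}, C_{j\s}, C_{j\tau}$ (genera $5,3,3$, none equal to $2g(F)-1=1$), not only the two you mention.
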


\begin{proof}
As before, note that $JD_\s=P(C_\s/H), \ JD_{\tau}=P(C_\tau/H)$. To see the last part of the decomposition, note that $P(C_{\s\tau}/H)=E\boxplus F$, however, both $P(C_{\s\tau}/H)$ and $E$ are not embedded in $J\tC$. Since $g(C_{\s\tau})>1,\ g(C_{j\s})>1,\ g(C_{j\tau})>1$, we have that $F$ is embedded in $J\tC$. And so $P(C_{\s\tau}/H)^*=E_j\boxplus F$.
\end{proof}

\begin{lem} \label{intersection_Jacobians}
The curve $C_{j\s}$ 
(respectively $C_{j\tau}$) is a double covering of $F$ and a double covering of $D_\tau$ 
(respectively $D_{\s}$). In particular,  both curves are hyperelliptic.
The intersection of the Jacobians $JC_{j\s} \cap JC_{j\tau}$ embedded in $J\tC$ is the elliptic curve $F$.
\end{lem}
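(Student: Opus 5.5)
We argue entirely inside the group $\langle \s,\tau,j\rangle\simeq\ZZ_2^3$ acting on $\tC$, using the fixed-point counts of Proposition \ref{liftII2} and the genera of Corollary \ref{genera1}. By symmetry ($\s\leftrightarrow\tau$) it suffices to treat $C_{j\s}=\tC/\langle j\s\rangle$, which has genus 3.

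\textbf{Step 1: the maps out of $C_{j\s}$.} The subgroup $\langle j\s\rangle$ lies in the two Klein subgroups $\langle j\s,\s\tau\rangle=\langle j\s, j\tau\rangle$ and $\langle j\s,\tau\rangle$. So $C_{j\s}$ is a double cover of $\tC/\langle\s\tau,j\s\rangle=F$ and of $\tC/\langle \tau, j\s\rangle=D_\tau$. Here we use the identification $F=C_{\s\tau}/j'$ with $j'=j\s=j\tau$ on $C_{\s\tau}$.

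\textbf{Step 2: hyperellipticity.} The third quotient of $C_{j\s}$ comes from $\langle j\s, j\rangle=\langle j,\s\rangle$, giving $\tC/\langle j,\s\rangle$. This is the quotient of $C_\s$ by the lift of the hyperelliptic involution of $H$ fixing the preimages of $w_1,w_2$'s complement. Concretely, $\tC/\langle j,\s\rangle=C_\s/\bar j$. Since $j$ has 16 fixed points on $\tC$, its image on $C_\s$ (genus 5, with $C_\s=C_\xi$ or $C_\eta$ hyperelliptic by Corollary \ref{farkas-lemma}) is the hyperelliptic involution. Hence the quotient is $\PP^1$, and so $C_{j\s}\to\PP^1$ has degree 2, i.e. $C_{j\s}$ is hyperelliptic.

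To double-check, apply Riemann–Hurwitz to $C_{j\s}$ with the induced involution $\bar j$. Fixed points of $\bar j$ on $C_{j\s}$ are images of points of $\tC$ fixed by $j$ or by $\s$. Since $\s$ is free, we get $16/2=8=2\cdot 3+2$ points, which forces the quotient to have genus $0$. The analogous count gives genus 2 for $D_\tau$, consistent with Diagram \ref{diag:partial_tower_II2}.

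\textbf{Step 3: the intersection.} We have $JC_{j\s}=\Fix(j\s)^0$ and $JC_{j\tau}=\Fix(j\tau)^0$, since both quotients have positive genus. Their intersection is contained in $\Fix(\langle j\s,j\tau\rangle)^0$, and this equals the image of $JF$. Conversely, $F$ is a quotient of both curves, so its pullback lies in both Jacobians.

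The main obstacle is embedding: one must show that $F$, rather than a non-injective image, is the intersection, and that the intersection has no extra finite part beyond its connected component. For this we invoke Proposition \ref{isotyp_II.2}, which says $F$ embeds in $J\tC$ (Corollary \ref{coremb}). On the isotypical decomposition, $j\s$ and $j\tau$ both act as $+1$ only on the $F$-component: on $JD_\s^*$ and $JD_\tau^*$ exactly one of them acts as $+1$, and on $JH^*$ and $E_j$ they act as $-1$. Comparing with $JC_{j\s}\sim JD_\tau\times F$ then identifies the intersection with $F$, as in Lemma \ref{intersection_Jacobians_II.1}.
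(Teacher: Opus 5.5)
Your route is the paper's: you read off the quotients of $C_{j\s}$ from the subgroups of $\langle\s,\tau,j\rangle$ containing $j\s$, and you identify $JC_{j\s}\cap JC_{j\tau}$ with the part of $J\tC$ fixed by $\langle j\s,j\tau\rangle$. Steps 1 and 2 are fine. Your Riemann--Hurwitz count on $C_{j\s}$ is a clean proof of hyperellipticity, which the paper leaves implicit: the induced $\bar j$ has $|\Fix(j)|/2=8$ fixed points, so the quotient is $\PP^1$. Your first argument through $C_\s$ is incomplete as worded. The fixed points of $\bar j$ on $C_\s$ come from $\Fix(j)\cup\Fix(j\s)$ and number $(16+8)/2=12$; the $16$ fixed points of $j$ alone give only $8$.

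The genuine gap is in Step 3. The claim that the intersection lies in $\Fix(\langle j\s,j\tau\rangle)^0$ is exactly what needs proof, and the isotypical comparison does not supply it. Proposition \ref{isotyp_II.2} is a decomposition only up to isogeny. From $JC_{j\s}=JD_\tau^*+F$ and $JC_{j\tau}=JD_\s^*+F$ you get $(JC_{j\s}\cap JC_{j\tau})^0=F$. But the intersection is $F+\bigl(JD_\tau^*\cap(JD_\s^*+F)\bigr)$, and you have not shown that this finite group lies in $F$. The paper's one-line $\Fix(j\s)\cap\Fix(j\tau)=\Fix(j\s,j\tau)=F$ glosses over the same point, so you match its rigour but do not deliver what you announced.

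One way to close it is as follows.
\begin{enumerate}
\item If $x=(1+j\s)y$ lies in both Jacobians, then $2x=(1+j\tau)x=(1+j\tau)(1+j\s)y\in F$. So the intersection is $F$ plus its $2$-torsion.
\item Such a $2$-torsion point is $p_1^*\eta_1=p_2^*\eta_2$ with $\eta_1\in JC_{j\s}[2]$ and $\eta_2\in JC_{j\tau}[2]$. The pullbacks $p_i^*$ are injective because $\tC\to C_{j\s}$ and $\tC\to C_{j\tau}$ are ramified; this, not positivity of genus, is the correct reason in your Step 3.
\item View the $\eta_i$ as homomorphisms to $\ZZ_2$ on the preimages $\Gamma_1,\Gamma_2$ of $\langle j\s\rangle,\langle j\tau\rangle$ in the orbifold group $\Gamma$ of $\tC\to F$. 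They vanish on elliptic elements and agree on $\pi_1(\tC)=\Gamma_1\cap\Gamma_2$, where the common restriction is invariant under both $j\s$ and $j\tau$.
\item For $g\in\Gamma$, the difference $\eta_1\circ c_g-\eta_1$ therefore factors through $\Gamma_1/\pi_1(\tC)\cong\ZZ_2$. It also vanishes on elliptic elements, which exist and map to the generator because $j\s$ has fixed points. Hence it is zero, and likewise for $\eta_2$.
\item So $g_1g_2\mapsto\eta_1(g_1)+\eta_2(g_2)$ is a well-defined homomorphism on $\Gamma=\Gamma_1\Gamma_2$. It kills all elliptic elements, since these map to $j\s$ or $j\tau$ ($\s\tau$ being free). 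It thus defines a point of $JF[2]$ pulling back to $p_1^*\eta_1$.
\end{enumerate}
Hence the intersection is exactly $F$.
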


\begin{proof}
Firstly, note that the quotient of  $C_{j\s}$ 
(respectively $C_{j\tau}$) by $j\tau$ (respectively $j\s$) is $F$, whereas the quotient  by $j\tau$ is the curve  $D_\tau$ 
(respectively $D_\s$).
Secondly, notice that the Jacobians of $C_{j\s}$ and $C_{j\tau}$ can be described as the image of certain endomorphisms of $J\tC$:
$$
JC_{j\tau}= \im (1+j\tau) \quad  JC_{j\s} = 
\im (1+j\sigma).
$$
Then $ JC_{\tau} \cap JC_{\s} = \Fix(j\sigma) \cap 
\Fix(j\tau) = \Fix (j\sigma, j\tau) = F$.
\end{proof}

\begin{thm}
The  Prym map 
$\cP^{iso}_3 :  \cRH_3^{iso} \ra  \cA_6^{(1,1,1,2,2,4)}$  is injective on the irreducible component consisting of Klein coverings of type II.2.
\end{thm}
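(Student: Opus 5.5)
Following the pattern of the previous cases, I start from $(P,\Xi)$ in the image of a type II.2 covering. Proposition \ref{autoofP} gives the group $B=\langle \s,\tau,j,-1\rangle\simeq\ZZ_2^4$, and Corollary \ref{Bplus} gives $B^+$. The first task is to identify the involutions $j\s$ and $j\tau$ (up to swapping them) intrinsically, using only the dimensions of fixed loci in $P$ and the types of the restricted polarisations.

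By Proposition \ref{isotyp_II.2}, $P=JD_\s^*\boxplus JD_\tau^*\boxplus F\boxplus E_j$, and Corollary \ref{genera1} gives the following fixed loci inside $P$:
\begin{itemize}
\item $\Fix(j)^0=E_j$, of dimension $1$ and type $(2)$;
\item $\Fix(j\s)^0$ and $\Fix(j\tau)^0$ are $JC_{j\s}$ and $JC_{j\tau}$, abelian threefolds whose restricted polarisation is of type $(2,2,2)$, since these involutions have fixed points and the pullback is an embedding;
\item $\Fix(j\s\tau)^0$ is a fivefold, namely the complement of $E_j$;
\item $\Fix(\s)^0$ and $\Fix(\tau)^0$ are $JD_\s^*$ and $JD_\tau^*$, of dimension $2$, and $\Fix(\s\tau)^0=E_j\boxplus F$, also of dimension $2$;
\item the artificial involutions $-\alpha$ have complementary fixed loci.
\end{itemize}

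So the threefold fixed loci come from $j\s$, $j\tau$, $-\s\tau$ (complement $JD_\s\boxplus JD_\tau$, of dimension $4$; this needs rechecking) and some of the $-\alpha$. Counting, $\Fix(-\s)^0$ and $\Fix(-\tau)^0$ are fourfolds and $\Fix(-j\s)^0$, $\Fix(-j\tau)^0$ are threefolds. To separate $j\s$, $j\tau$ from $-j\s$, $-j\tau$, I would use the same argument as in cases I.2 and II.1. The complement $(JC_{j\s})^c$ contains the $\ZZ_4^2$ part of $K(\Xi)$, so its type has the form $(*,*,4a)$ and cannot be $(2,2,2)$. Thus exactly two involutions have three-dimensional fixed loci of type $(2,2,2)$, and I label them $j\s$ and $j\tau$. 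I would double-check that these fixed loci are Jacobians, i.e.\ that the restricted polarisation is $2$ times a principal one, and then apply the Torelli theorem to recover the genus $3$ curves $C_{j\s}$ and $C_{j\tau}$.

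Next, by Lemma \ref{intersection_Jacobians}, $JC_{j\s}\cap JC_{j\tau}=F$, which is intrinsic as $\Fix(\langle j\s,j\tau\rangle)^0$. The maps $C_{j\s}\to F$ and $C_{j\tau}\to F$ are ramified double covers (genus $3$ over genus $1$, hence $4$ branch points), so they do not factor through an étale map. Lemma \ref{key_lemma} therefore recovers both coverings from the embeddings $F\subset JC_{j\s}$ and $F\subset JC_{j\tau}$. I then form the normalised fibred product $\widetilde{C_{j\s}\times_F C_{j\tau}}$. The subdiagram of Diagram \ref{diag:partial_tower_II2} with top $\tC$, middle row $C_{j\s}$, $C_{\s\tau}$, $C_{j\tau}$ and bottom $F=\tC/\langle j\s,j\tau\rangle$ is a Klein (branched) diagram, so $\tC$ satisfies the universal property and is isomorphic to this fibred product. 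This also recovers the involutions $j\s$, $j\tau$ and hence $\s\tau=(j\s)(j\tau)$ on $\tC$.

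The main obstacle is recovering $\s$ and $\tau$ individually, since $\langle j\s,j\tau\rangle$ only yields $\s\tau$. I would first try to recover $j$ as the unique lift with $16$ fixed points (Proposition \ref{liftII2}), realised on $\tC$ via the hyperelliptic involution of $C_{j\s}$: it is hyperelliptic by Lemma \ref{intersection_Jacobians}, and its hyperelliptic involution is unique and should lift to $\tC$ as $j$. Then $\s=j\cdot j\s$ and $\tau=j\cdot j\tau$, so $\langle\s,\tau\rangle$ is recovered and $H=\tC/\langle\s,\tau\rangle$ together with the Klein covering follows. As a fallback, $j$ generates, together with $\langle j\s,j\tau\rangle$, the full automorphism group $\langle\s,\tau,j\rangle$, and $\tC/\langle j,\s,\tau\rangle=\PP^1$ pins it down.
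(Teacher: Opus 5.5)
Your proposal is correct and follows the paper's route. You recover $C_{j\s}$ and $C_{j\tau}$ by Torelli, obtain their double covers of $F=(JC_{j\s}\cap JC_{j\tau})^0$ via Lemma~\ref{key_lemma}, identify $\tC$ with the normalised fibred product, and lift the hyperelliptic involution of $C_{j\s}$ (whose two lifts are $j$ and $\s$, distinguished by $16$ versus $0$ fixed points) to recover $\langle\s,\tau\rangle$.

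The only variation is in how you locate $JC_{j\s}$ and $JC_{j\tau}$. You single them out as the two involutions in $B$ with three-dimensional fixed loci of type $(2,2,2)$, using the order-$4$ part of $K(\Xi)$ to exclude $-j\s$ and $-j\tau$, as in cases I.2 and II.1. The paper instead reads them off the isotypical decomposition as $JD_\s^*+F$ and $JD_\tau^*+F$, with $F$ the unique isotypical component of exponent $(4)$.

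One small slip, which does not affect your argument: the fixed fivefold of $j\s\tau$ in $P$ contains $E_j$ and is the complement of $F$, not of $E_j$.
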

\begin{proof}
Let $(P, \Xi)$ be an element on the image of $\cP^{ni}_3$. Then the group $\langle \sigma, \tau,j, -1 \rangle \simeq \ZZ_2^4$ acts on $P$ and according to Proposition \ref{isotyp_II.2}, this action  induces the isotypical decomposition 
$$
P =   P^*(C_{\s}/H) \boxplus P^*(C_{\tau}/H) \boxplus E_j \boxplus F
$$
with a distinguished elliptic curve $F$ of exponent $(4)$. 
Moreover, by construction 
$$
P^*(C_{\s}/H)+F \sim (JC_{j\s}, 2\Theta_{C_{j\s}}),  \quad  P^*(C_{\tau}/H)+F \sim (JC_{j\tau}, 2\Theta_{C_{j\tau}}).
$$
So the isotypical components determine uniquely the 
polarised Jacobians called $JC_{j\s}$ and $JD_{j\tau}$ and
by Torelli Theorem the genus 3 curves  $C_{j\s}$ and $ C_{j\tau}$ are also determined. Since $F$ is embedded in the Jacobians by Lemma \ref{key_lemma}, we consider the fibered product 
\begin{equation}\label{fibred_product_II.2}
\xymatrix@R=.9cm@C=1cm{
 \widetilde{C_{j\s} \times_{F}  C_{j\tau}}\ar[d]_{2:1} \ar[r]^{ \ 2:1} &  C_{j\s}  \ar[d]^{h_\s}  \\
C_{j\tau} \ar[r]_{h_\tau} &  F 
}
\end{equation}

Arguing as in the other case, we get a subdiagram of Klein covering diagram  
\begin{equation}\label{diag:MumII2}
\xymatrix@R=.9cm@C=.6cm{
& \tC \ar[dr] \ar[dl] \ar[d] & \\
C_{j\s}\ar[dr] &  C_{\s\tau} \ar[d] & C_{j\tau} \ar[dl]\\
&F& 
}
\end{equation}
hence $\tC=\widetilde{C_{j\s} \times_{F} C_{j\tau}}$. In particular, we can distinguish the subgroup $\left<j\s,j\tau\right>\subseteq \Aut(J\tC)$. 
Now, one can lift the hyperelliptic involution from $C_{j\s}$ to get two involutions $j', j'j\s$ with the sum of fixed points being equal to at least twice the number of Weierstrass points on $C_{j\s}$ that is $16$.
Hence the lifts have to be $j$ and $\s$. 
In par\-ti\-cu\-lar, the curve $\tC$ admits a distinguished group of automorphisms
$\langle j\s, j\tau, j \rangle \simeq \ZZ_2^3$. Therefore,
we can uniquely recover the Klein subgroup of fixed-point-free involutions $\left<\s,\tau\right>$ and hence the element $(H, \langle \eta, \xi \rangle) \in \cRH_{3}^{iso}$.
\end{proof}

\begin{rem}
Following Diagram \ref{diag:partial_tower_II2} and Proposition \ref{isotyp_II.2} consider the covering $\tC\to C_\s$. Since $JC_\s=JH^*\boxplus 
JD_\s$, we get that the Prym variety of the covering equals $P(\tC/C_\s)=JD^*_\tau\boxplus F\boxplus E$. The involution $j\s\tau$ is genuine and its fixed point set equals  $JC_{j\s\tau}\cap P(\tC/C_{\s})=JD^*_\tau\boxplus E_j$, hence is of codimension 1 in $P(\tC/C_{\s})$. This shows that $j\s\tau$ is a pseudoreflection of geometric origin. Moreover, since $g(C_{j\s\tau})=5$ the Prym variety is in the image of $\mathcal{P}_5(\mathcal{RF}_{5,3})$, see \cite{ALN} for more details.
    \end{rem}

\section{Non-isotropic Klein coverings of type I.1}
For the self-containment of the paper, we would like to recall the $I.1$ case from \cite{BO24}, namely when $\eta=w_1-w_2, \xi=w_2-w_3, \eta+\xi=w_1-w_3$. This is a 
special case of the construction made in \cite{BO24} because the top curve is hyperelliptic. We would like to state the main steps of injectivity of the Prym map (a little differently than in the original paper).
\begin{rem}\label{thmI.1}
 The moduli space of coverings of type $I.1$ is isomorphic to $\PP_{3,5}$.
    The lifts of hyperelliptic involutions have $20,4,4,4$ fixed points, so in particular $\tC$ is hyperelliptic with the hyperelliptic involution $j$. The isotypical decomposition of the Prym variety $P=P(\tC/H)$ with respect to $\left<\s,\tau,-1\right>$ is
    $$
  P  =   JC_{\s,j\tau} \boxplus JC_{\s\tau,j\tau} \boxplus JC_{\tau, j\s\tau}
$$ 
The sum $JC_{\s,j\tau} \boxplus JC_{\s\tau,j\tau}$ equals the Jacobian $JC_{j\tau}$, whereas the sum $JC_{\s,j\tau} \boxplus JC_{\tau,j\s\tau}$ equals $JC_{j\s\tau}$. In particular $JC_{j\tau}\cap JC_{j\s\tau}=JC_{\s,j\tau}$. Now, the curve $\tC$ is the (normalised) fibered product of $C_{j\tau}\times_{C_{\s,j\tau}} C_{j\s\tau}$ and the group $\left<\s,\tau\right>$ can be constructed by composing $j\s$ and $j\tau$ with the hyperelliptic involution $j$ on $\tC$. 
We leave the details to the reader.
\end{rem}
\section{Application of the results to Klein Prym maps}
This section is devoted to a global Klein Prym map of genus 3 curves, i.e. when a bottom curve is not necessarily hyperelliptic.
Let $\cR_3^{V_4}=\cR_3^{iso} \sqcup \cR_3^{ni}$ be a decomposition of a moduli of Klein coverings of genus 3 curves into isotropic and non-isotropic coverings. Observe that 
the forgetful map $\cR_3^{V_4} \ra \cM_3 $ is \'etale of degree $\frac{1}{6}{63 \choose 2}= 651$.
\begin{lem}
    The spaces $\cR_3^{iso}$ and $\cR_3^{ni}$ are irreducible.
\end{lem}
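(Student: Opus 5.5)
The plan is to use that $\cR_3^{V_4}\to\cM_3$ is a finite étale cover. Irreducibility of a component then amounts to transitivity of the monodromy action on the corresponding fibre. Fix a general curve $H$ of genus 3. The fibre of $\cR_3^{V_4}$ over $[H]$ is the set of Klein subgroups $G=\langle\eta,\xi\rangle\subset JH[2]\simeq\FF_2^6$. The monodromy of the universal family of $2$-torsion points over $\cM_3$ is the full symplectic group $Sp(6,\FF_2)$, acting on $JH[2]$ with the Weil pairing. This is classical: the mapping class group surjects onto $Sp(2g,\ZZ)$, which surjects onto $Sp(2g,\FF_2)$. Hence the irreducible components of $\cR_3^{V_4}$ correspond bijectively to the $Sp(6,\FF_2)$-orbits on the set of two-dimensional subspaces of $\FF_2^6$.

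It then remains to show that there are exactly two such orbits: the isotropic planes and the non-isotropic ones. The Weil pairing is preserved by the action, so the two classes are unions of orbits. To show each class is a single orbit, I would use Witt's extension theorem for the non-degenerate alternating form. Any linear isometry between subspaces extends to an element of $Sp(6,\FF_2)$.

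For two isotropic planes, any linear isomorphism between them is an isometry, since the form vanishes on both. For two non-isotropic planes, the form restricted to each is a non-degenerate alternating form on $\FF_2^2$. Choosing bases $(e,f)$ and $(e',f')$ with $e(e,f)=e(e',f')=1$ gives an isometry. In both cases Witt's theorem extends the isometry to a symplectic automorphism of $\FF_2^6$ carrying one plane to the other. As a sanity check, the counts $315$ and $336$ from Lemma \ref{4components} add up to $651$, the degree of $\cR_3^{V_4}\to\cM_3$.

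The main obstacle is justifying that the monodromy group is all of $Sp(6,\FF_2)$. If one prefers to stay within the paper, an alternative is available. Since $\cM_3$ is irreducible, every component of the étale cover dominates $\cM_3$, and its closure meets the hyperelliptic locus. One could then degenerate to $\cH_3$ and use the four components I.1, I.2, II.1, II.2 from Lemma \ref{4components}. One would need to show that the two non-isotropic types, and likewise the two isotropic types, are connected through the monodromy of the full $\cM_3$. This again reduces to the fact that symplectic transvections, realised by Dehn twists, act transitively. I would simply cite the surjectivity of the mapping class group onto $Sp(6,\ZZ)$ and avoid the degeneration argument.
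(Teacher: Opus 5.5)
Your argument is correct and is essentially the paper's proof in monodromy language. The paper gets irreducibility of the level-$2$ cover (your full $Sp(6,\FF_2)$ monodromy) from the fact that $J_3[2]$ is open dense in the irreducible space $\cA_3[2]$, which avoids the mapping-class-group input you flagged as the main obstacle; your Witt-theorem step is exactly what justifies the surjectivity of its forgetful maps $[JC,(\mu_i,\lambda_i)]\mapsto[C,\mu_1,\lambda_1]\in\cR_3^{ni}$ and $[JC,(\mu_i,\lambda_i)]\mapsto[C,\mu_1,\mu_2]\in\cR_3^{iso}$, which the paper asserts without comment.
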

\begin{proof}
Let $\cA_3[2]$ be the moduli space of principally polarized abelian threefolds with a 2-level structure, i.e., with a symplectic basis of the set of $2$-torsion points, see \cite[Section 8.3.1]{BL}.
Then $J_3[2]\subset\cA_3[2]$ as a space of Jacobians with level structure is an open dense subset, hence irreducible. If $(\mu_i,\lambda_i)$ is a symplectic basis of $JC[2]$, then the forgetful maps
$$J_3[2]\ni [JC,(\mu_i,\lambda_i)]\to [C,\mu_1, \lambda_1]\in \cR_3^{ni}$$ 
$$J_3[2]\ni [JC,(\mu_i,\lambda_i)]\to [C,\mu_1, \mu_2]\in \cR_3^{iso}$$
are surjective, hence the codomains are irreducible.     
\end{proof}

\begin{thm}\label{FullKlein}
    The Klein Prym maps $$
\cP^{iso}_3 :  \cR_3^{iso} \ra  \cA_6^{(1,1,1,2,2,4)} \qquad \cP^{ni}_3 :  \cR_3^{ni} \ra  \cA_6^{(1,1,1,1,4,4)}
$$ are generically finite. Moreover they are of degree $1$ on images of coverings of hyperelliptic curves of types $I.2, II.1, II.2$.
\end{thm}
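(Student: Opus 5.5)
We deduce the statement from the hyperelliptic injectivity results of Sections 2--4 through dimension counts and semicontinuity of fibre dimension. Both $\cR_3^{iso}$ and $\cR_3^{ni}$ are irreducible of dimension $6=\dim\cM_3$, by the preceding lemma and the fact that the forgetful map to $\cM_3$ is \'etale. The target spaces $\cA_6^{\delta}$ have dimension $21$, so there is no dimensional obstruction. By upper semicontinuity of fibre dimension, the locus of $\cR_3^{\ast}$ where the fibre of $\cP_3^{\ast}$ through the point has positive dimension is closed. It therefore suffices to exhibit one point of each irreducible space at which the fibre is zero-dimensional; generic finiteness on the irreducible source then follows.

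Take a point $x$ of type $I.2$ in $\cR_3^{ni}$, or of type $II.1$ (or $II.2$) in $\cR_3^{iso}$. These components embed as $5$-dimensional loci $\cRH_3^{\ast}\subset\cR_3^{\ast}$, in bijection with $\PP'_{2,3,3}$, $\PP'_{2,2,2,2}$ and $\PP'_{2,2,4}$ respectively. We must show that no positive-dimensional family of Klein coverings, possibly over non-hyperelliptic curves, has Prym variety constant equal to $\cP(x)$.

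The key input is a reconstruction statement for the whole fibre: if $(P,\Xi)=\cP(y)$ for an arbitrary $y\in\cR_3^{\ast}$, then $y$ is a covering of a hyperelliptic curve of the same type, hence $y=x$ by the injectivity theorems. I would argue this from the automorphism group, as follows.
\begin{enumerate}
\item If $y=[\tC'\to H']$ with $H'$ non-hyperelliptic and generic, then the analogue of Proposition \ref{autoofP} gives $B=\langle\s',\tau',-1\rangle\simeq\ZZ_2^3$, since the only automorphisms of $H'$ are trivial.
\item For $\cP(x)$ the same group $B$ (involutions acting by $\pm\id$ on $K(\Xi)$, an intrinsic invariant of $(P,\Xi)$) is $\ZZ_2^4$.
\item By the argument of Proposition \ref{autoofP}, any such $\psi$ lifts to a polarised automorphism of $J\tC'$. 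By the Strong Torelli Theorem, up to sign it comes from an automorphism of $\tC'$ acting as $\pm1$ on $f'^*JH'$, i.e.\ a lift of an automorphism of $H'$ acting as $\pm1$ on $JH'$. For $g=3$ such an automorphism is the identity or a hyperelliptic involution.
\item Hence $|B|=16$ forces $H'$ to be hyperelliptic.
\end{enumerate}
The type ($I.1$, $I.2$, $II.1$ or $II.2$) is then determined by the isotypical dimensions recorded in Propositions \ref{isotyp_I.2PB}, \ref{isotyp_II.1} and \ref{isotyp_II.2} and Remark \ref{thmI.1}. These dimensions are pairwise different for the components, as noted after the main theorem. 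So $y$ lies in the same component as $x$, and Theorems \ref{thmI.2} etc.\ give $y=x$. This shows that $\cP^{-1}(\cP(x))=\{x\}$, which gives finiteness at $x$ and hence generic finiteness.

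For degree $1$ on the images of these components, the fibre-reconstruction statement above gives $\cP^{-1}(\cP(x))=\{x\}$ set-theoretically for every $x$ of types $I.2$, $II.1$, $II.2$. In this sense the map has degree $1$ over those image loci. Type $I.1$ is excluded because Proposition \ref{autoofP} is not stated for it: there $\tC$ is hyperelliptic and $j=-1$ on $J\tC$, so the count of $B$ fails.

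The main obstacle is step (3) for an arbitrary non-hyperelliptic $H'$. One needs that the Proposition \ref{autoofP} argument (lifting $\psi$ via the addition map, and citing \cite{BS25} for polarisation compatibility) works without the hyperellipticity assumption. One also needs that no artificial involution of $P$ could enlarge $B$. I would handle this by noting that the lifting diagram only uses $\psi|_{K(\Xi)}=\pm\id$ and the fact that $\tC'$ is non-hyperelliptic, the latter holding generically and for all $y$ near $x$.
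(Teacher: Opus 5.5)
Your proof is correct. The degree-one part is the paper's argument. The group $B$ of Proposition \ref{autoofP} is an invariant of $(P,\Xi)$. It has order $16$ for types I.2, II.1 and II.2. It has order $8$ for coverings of non-hyperelliptic curves, where there is no $j$, and for type I.1, where $j$ acts as $-1$. The injectivity theorems then finish.

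You differ from the paper in how you get generic finiteness. The paper argues on the target. Injectivity on both hyperelliptic components (I.1 and I.2, resp.\ II.1 and II.2) gives two $5$-dimensional subsets of the irreducible image, and these are disjoint because the isotypical dimensions differ. An irreducible image of dimension $5$ cannot contain two disjoint dense constructible subsets, so the image has dimension $6$. You argue on the source instead. The global fibre computation shows $\cP^{-1}(\cP(x))=\{x\}$, so $x$ is isolated in its fibre. Upper semicontinuity of the local fibre dimension on the irreducible $6$-dimensional space then gives generic finiteness.

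Your route needs only one hyperelliptic component in each space. In particular, in the non-isotropic case it avoids both the type I.1 result, which the paper only sketches via \cite{BO24}, and the disjointness of the images. A single argument also yields both assertions. The cost is that generic finiteness now depends on extending Proposition \ref{autoofP} to arbitrary coverings, whereas the paper's proof of that part uses only the hyperelliptic injectivity theorems.

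Your closing worry about non-hyperellipticity of $\tC'$ ``near $x$'' is unnecessary. If $H'$ is not hyperelliptic, then neither is $\tC'$, since a hyperelliptic involution of $\tC'$ would be central and would descend to $H'$ with rational quotient. In any case, for hyperelliptic $\tC'$ Torelli gives $\Aut(J\tC',\Theta)=\Aut(\tC')$ directly. So the bound $|B|\le 8$ holds for every covering of a non-hyperelliptic genus $3$ curve, and the fibre statement is global, as the degree-one claim requires.
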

\begin{proof}
    Let us consider the non-isotropic case. By Theorems \ref{thmI.1} and \ref{thmI.2} the Prym map ${\cP^{ni}_3}_{|\cRH_3^{ni}}$ is injective on each component. Moreover, images of respective components are disjoint as the isotypical decomposition yields components of different dimensions. Hence, the image of the Prym map, being an irreducible variety and containing $2$ disjoint subvarieties of dimension $5$ has to be of dimension at least $6=\dim(\cR_3^{ni})$. This shows that the map is generically finite.

    Now, let $P$ be in the image of the Prym map of hyperelliptic curves of any of $I.2, II.1, II.2$ type. Then, Proposition \ref{autoofP} implies the group $G$ is isomorphic to $\ZZ_2^4$.
    If we start with a non-hyperelliptic curve, then there is no $j$, so the analogous Proposition will yield $\left<\s,\tau, -1\right>\cong\ZZ_2^3$. Similarly, for hyperelliptic coverings (case I.1), when the lift of the hyperelliptic involution is hyperelliptic, hence induces $-1$.
    This shows there is precisely one element in the preimage of $P$. 
\end{proof}

The above theorem is an evidence to a positive answer of the following question.
\begin{question}
Are the following Prym maps
$$
\cP^{iso}_3 :  \cR_3^{iso} \ra  \cA_6^{(1,1,1,2,2,4)} \qquad \cP^{ni}_3 :  \cR_3^{ni} \ra  \cA_6^{(1,1,1,1,4,4)}
$$
    injective?
\end{question}


\begin{thebibliography}{999999}


\bibitem{ALN} R. Auffarth, M. Lahoz, J. C. Naranjo, \emph{Pseudoreflections on Prym Varieties}, Preprint: \verb|arXiv:2412.04940|.





\bibitem{BL} 
Ch.~Birkenhake, H.~ Lange, 
\textit{Complex abelian varieties},
Second edition. Grundlehren der Mathematischen
Wissenschaften, 302. Springer-Verlag (2004).

\bibitem{BO19} 
P. Borówka, A. Ortega,  \textit{Klein coverings of genus 2  curves},
Trans. Amer. Math. Soc., {\bf 373} (3) 1885--1907, 2020.






\bibitem{BO24} 
P. Borówka, A. Ortega,  \textit{Involutions on 
hyperellitptic curves}, Annali Scuola Normale
Superiore - Classe di Scienze, 2024 (to appear), \verb|doi.org/10.2422/2036-2145.202309_039|.




\bibitem{BNOS}
P. Borówka, J.C. Naranjo,  A. Ortega, A. Shatsila, \textit{Prym maps of cyclic coverings of hyperelliptic curves}. Preprint:   arXiv:2510.01330v2



\bibitem{BS25} 
P. Borówka, A. Shatsila 
\textit{$\ZZ_3 \times \ZZ_3$ coverings  of genus 2  curves}, Proc. AMS. 2026 (to appear), \verb|doi.org/10.1090/proc/17548|.

  \bibitem{DO}
 I.~Dolgachev,
  \textit{Classical algebraic geometry. A modern view}.
  Cambridge University Press (2012).

\bibitem{LR} H. Lange, R. E. Rodriguez, {\it Decomposition of Jacobians by Prym varieties}, volume 2310 of Lecture Notes in Mathematics. Springer, 2022 

  \bibitem{M1}
  D. Mumford, \textit{Prym varieties I}, 
 Contributions to analysis (a collection of papers dedicated to Lipman Bers),
Academic Press, New York, 325–350,  1974. 

\bibitem{NO22} J.C. Naranjo, A. Ortega, \emph{Global Prym-Torelli for double coverings ramified in at least 6 points} 
J. Algebraic Geom. {\bf 31} (2022), no.2, 387-- 396.


  \bibitem{NOS24} J.C. Naranjo, A. Ortega, I. Spelta, {\it Cyclic coverings of genus 2 curves of Sophie Germain
type}. Forum Math. Sigma, {\bf 12} : Paper No. 64, 14, 2024.
\bibitem{NOPS25}
J. C. Naranjo, A. Ortega, G.P. Pirola, I. Spelta, {\it Simplicity of some Jacobians with
many automorphisms}. Algebr. Geom. {\bf 12}(6):869--887, 2025.

\bibitem{Pardini} R. Pardini. {\it Abelian covers of algebraic varieties}. J. Reine Angew. Math., {\bf 417} 191–213, 1991.

\bibitem{S25}
A.  Shatsila,  {\it On the Prym map of degree 4 cyclic covers of genus 2 curves}
Preprint:  arXiv:2508.20838.

\bibitem{Sho}
V.V. Shokurov,  {\it Prym varieties: theory and applications}. Math USSR Izvestiya Vol. 23, No.1(1984)

\bibitem{T}
M. Teixidor i Bigas, {\it For which Jacobi varieties is $\Sing \Theta$ reducible?}. J. Reine Angew. Math. 354, 141-149 (1984), 141--149.

    \end{thebibliography}
\end{document}